\def\bases{\operatorname{Bases}}
\newtheorem{theorem}{Theorem}
\newtheorem{proposition}[theorem]{Proposition}
\newtheorem{definition}[theorem]{Definition}
\newtheorem{remark}[theorem]{Remark}
\newtheorem{example}[theorem]{Example}
\newenvironment{proof}{{\bf Proof.}}{\par}
\newcommand{\C}{{\mathbb C}}
\newcommand{\R}{{\mathbb R}}
\newcommand{\Z}{{\mathbb Z}}
\newcommand{\N}{{\mathbb N}}
\renewcommand{\ll}{{\langle}}
\newcommand{\rr}{{\rangle}}
\newcommand{\gc}{{\mathfrak{c}}}
\newcommand{\vol}{\operatorname{vol}}
\renewcommand{\a}{{\mathfrak{a}}}
\renewcommand{\c}{{\mathfrak{c}}}
\def\proj{\operatorname{proj}}
\def\h{\operatorname{ht}}
\newcommand{\g}{{\mathfrak{g}}}
\renewcommand{\t}{{\mathfrak{t}}}
\renewcommand{\k}{{\mathfrak{k}}}
\newcommand{\CA}{{\cal A}}
\newcommand{\CC}{{\cal C}}
\newcommand{\CR}{{\cal R}}
\newcommand{\CH}{{\cal H}}
\newcommand{\CP}{{\cal P}}
\newcommand{\CV}{{\cal V}}
\newcommand{\CW}{{\cal W}}
\newcommand{\la}{{\langle}}
\newcommand{\ra}{{\rangle}}
\newcommand{\res}{{\mathrm{res}}}
\newcommand{\Ires}{{\mathrm{Ires}}}\def\JK{\operatorname{JK}}
\title{Discrete series representations and $K$ multiplicities for  $U(p,q).$ User's guide}
\author{Velleda Baldoni and Mich{\`e}le Vergne}
\date{}
\begin{document}
\maketitle

\begin{abstract}This document is a companion for the Maple program
\textbf{Discrete series and $K$-types for  $U(p,q)$} available on \begin{verbatim} http://www.math.jussieu.fr/~vergne/ \end{verbatim}

We explain an algorithm to compute the multiplicities of an irreducible representation of $U(p)\times U(q)$ in a discrete series of $U(p,q)$.  It is based on Blattner's formula. We    recall the general mathematical
background to compute Kostant partition functions via multidimensional residues, and  we outline our  algorithm.
We also point out some properties of the piecewise polynomial functions describing multiplicities based on Paradan's results.
\end{abstract}

{\small

% {\small \tableofcontents}

\tableofcontents

\section*{Introduction}
The present article is a user's guide for the Maple program
\textbf{Discrete series and K types  for  $U(p,q)$}, available at
\begin{verbatim} http://www.math.jussieu.fr/~vergne/ \end{verbatim}

 In the first part, we explain what our program does  with simple examples.
The second  part sets the general mathematical background.
We recall Blattner's formula, and we discuss the piecewise   (quasi)-polynomial behavior of multiplicities for a discrete series of a reductive real group.
In the third part, we outline the algorithm of computing partition functions for arbitrary set of vectors, based on Jeffrey-Kirwan residue and maximally nested subsets.
In the fourth part, we specialize the method  to $U(p,q)$.
We in particular explain how to  compute maximally nested subsets
of the set of non compact positive roots of $U(p,q)$.

In the last  two sections, we give more examples and some details on the implementation of the algorithms for our present application.

\bigskip

Here  $G$ is the Lie group $G=U(p,q)$  and $K=U(p)\times U(q)$ is a maximal compact subgroup of $G$.
Of course all these issues  can be addressed for the other reductive real Lie groups following the same approach described here.
To introduce the function we  want to study, we briefly establish some notations (see Sec. \ref{BF}). We denote by  $\pi^\lambda$  a discrete series representation with Harish Chandra parameter $\lambda$. The  restriction  of $\pi^{\lambda}$ to the maximal compact subgroup $K$ decomposes in irreducible finite dimensional  $K$ representations  with finite multiplicities, in formula
  $$\pi^{\lambda}\ _{|_{K}}= \sum_{\tau_\mu}m^{\lambda}_{\mu} \tau_{\mu}$$
  where we sum over  $\hat K$: the classes  $\tau_\mu$  of  irreducible finite dimensional $K$ representations, $\mu$ being the Harish Chandra parameter of $\tau_\mu.$

  \noindent $m^{\lambda}_{\mu} $ is a finite number  called  the {\bf multiplicity}  of the $K$-type $\tau_{\mu},$ or simply of  $\mu,$  in $\pi^\lambda$  and this paper addresses the question of computing $m^{\lambda}_{\mu} $.

\noindent The algorithm described in this paper
checks whether a certain $K$-type $\tau_\mu$  appears in the $K$-spectrum of a discrete series $\pi^{\lambda}$  by computing the multiplicity of such a $K$-type.
The input $\mu$ can also  be a symbolic  variable  as we will explain shortly.

\noindent By Blattner's formula,   computing the K-multiplicity is equivalent to compute the number of integral points, that is a partition function,  for  specific polytopes. We thus  use the formulae developed in \cite{BBCV} to write the algorithm.

One important aspect of these results is that the input datas $(\lambda,\mu)$ can also be treated as parameters.
Thus, in principle,  given $(\lambda_0,\mu_0)$, we can  output
a convex cone, containing $(\lambda_0,\mu_0)$, and a polynomial function  of the parameters $(\lambda,\mu)$  whose value at $(\lambda,\mu)$ is the multiplicity of the $K$-type $\mu$ in the representation $\pi^{\lambda}$ as long as we stay within the region described by the cone and $(\lambda,\mu)$ satisfy some integrality conditions to be defined later.
We also plan to explicitly decompose our parameter space $(\lambda,\mu)$
in  such regions of polynomiality in a future study, thus describing fully the piecewise polynomial function $m_\mu^{\lambda}$, at least for some low rank cases.

In practise here, we  only address a simpler question: we  will fix $\lambda$, $\mu$  and a direction $\vec{v}$  and compute a piecewise polynomial function of $t$ coinciding with $m_{\mu+t\vec{v}}^{\lambda}$ on integers $t$.
This way, we can   also check if  a direction $\vec{v}$ is an asymptotic direction of the $K$-spectrum, in the sense explained in  Sec. \ref{asymp}.

The Atlas of Lie Groups and Representations, \cite {ATLAS},   within the problem of classifying all of the irreducible unitary representations of a given reductive Lie group,  addresses in particular the problem of computing K-types of discrete series.  The multiplicities results needed for the general unitary problem  is of different nature as we are going to  explain.

Given as input  $\lambda$ and some height $h,$
 Atlas computes
{\bf the list with multiplicities}  of all the  representations occurring in $\pi^\lambda$ {\bf of height  smaller} than $h$.
But the efficiency, in this setting, is  limited by the height.
In contrast, the efficiency  of our program is  unsensitive to the height of $\lambda,\mu$, but the output
is  {\bf  one number}: the multiplicity of $\mu$ in $\pi^\lambda$.
It takes (almost) the same time to compute the multiplicity of the lowest
$K$-type of $\pi^\lambda$ (fortunately the answer is $1$)
than the multiplicity of a representation of very large height.
Our calculation are also very sensitive to the rank: $p+q-1$.

For other applications  (weight multiplicities, tensor products multiplicities) based  on computations of  Kostant  partition functions in the context of finite dimensional representations,  see \cite{C1}, \cite{C2},  \cite{BL}.

\section{The algorithm for Blattner's formula:  main  commands and simple  examples }

\noindent Let $p,q$  be integers. We consider the group $G=U(p,q)$.
The maximal compact subgroup is $K:=U(p)\times U(q)$.
More details in parametrization are given in Section \ref{Upq}.

A discrete series representation $\pi^\lambda$ is parametrized according to Harish-Chandra parameter $\lambda$, that we input as {\bf discrete}:
$$discrete:= [[\lambda_1,\ldots, \lambda_p],[\gamma_1,\ldots,\gamma_q]].$$

 Here $\lambda_i,\gamma_j$ are {\bf integers} if $p+q$ is odd, or {\bf half-integers} if $p+q$ is even. They are all distinct.
 Furthermore
 $ \lambda_1> \cdots > \lambda_p$ and  $\gamma_1> \cdots >\gamma_q.$

A  unitary irreducible representation of $K$ (that is a couple of
unitary irreducible representations of $U(p)$ and  of $U(q)$)  is parametrized by its Harish-Chandra parameters $\mu$ that we input as {\bf Krep}:
$$Krep:=[[a_1, a_2,\ldots,a_p],[b_1,\ldots,b_q]]$$
with $a_1>\cdots > a_p$
and $b_1>\cdots> b_q$.

Here  $a_i$ are integers if $p$  is odd, half-integers if $p$ is even.
Similarly $b_j$ are integers if $q$ is odd, half-integers if $q$ is even.

As we said, our   objective  is to study the function  $m_{\mu}^{\lambda}$  for  $
\mu \in \hat K$
 where
   $\mu=Krep$ and $\lambda=discrete.$

\noindent The examples are runned on a MacBook Pro, Intel Core 2 Duo, with a Processor Speed of 2.4 GHz. The time of the examples is computed in seconds. They are recorded  by
\begin{verbatim} TT \end{verbatim}.

Some of these examples are very simple and can be checked by hand (as we did, to reassure ourselves). Other examples are given at the end of this article.

\bigskip

 To compute the multiplicity of the $K$-type given by {\bf Krep}
 in the discrete series with parameter  given by {\bf discrete}, the  {\bf command} is
\begin{verbatim}
>discretemult(Krep,discrete,p,q)
\end{verbatim}
%\noindent {\bf Example}
\begin{example}\label{Ex1}\end{example}
{\scriptsize
\begin{verbatim}
Krep:=[[207/2, -3/2], [3/2, -207/2]];
discrete:=[[5/2, -3/2], [3/2, -5/2]];

>discretemult(Krep,discrete,2,2);

101
\end{verbatim}}
Here is another example of    $m_\mu^\lambda$  that our program can compute.

 \begin{example}\end{example}
{\scriptsize
\noindent We consider the discrete series indexed by
\begin{verbatim}
 lambda33:=[[11/2, 7/2, 3/2], [9/2, 5/2, 1/2]];
\end{verbatim}
Its lowest $K$-type is
\begin{verbatim}
 lowlambda33:= [[7,4,1], [5,2,-1]];
\end{verbatim}

\noindent Of course, the multiplicity of the lowest $K$-type is $1$. Our programm fortunately returns the value $1$ in $0.03$ seconds.

\medskip \noindent Consider now the representation of $K$ with parameter:
\begin{verbatim}
 biglambda33:= [[10006, 4, -9998], [10004, 2, -10000]];
\end{verbatim}
Then the multiplicity of this $K$-type is computed  computed  in $0.05$ seconds as
\begin{verbatim}
> discretemult(biglambda33,lambda33,3,3);

  2500999925005000;
\end{verbatim}}
Here are two other examples that verify the known behavior of holomorphic discrete series.
The notation $ab...$ that we use to label  the discrete series parameters is introduced in \ref{noncp} and it  very effective to picture the situation,
 but it is not relevant to understand the following computation.
 \begin{example}\end{example}
{\scriptsize Consider a holomorphic discrete series  of type "aaabbb" for $G=U(3,3)$ (see Sec.\ref{noncp}) with lowest $K$-type of dimension $1$. We verify that the multiplicity is $1$ for $\mu$ in the cone spanned by strongly orthogonal non compact positive roots.

\begin{verbatim}
hol33:=[[11/2,9/2,7/2],[5/2,3/2,1/2]];
lowhol33:=[[7, 6, 5], [1, 0, -1]];
bighol33:=[[7+1000, 6+100, 5+10], [1-10, -100, -1-1000]];

>discretemult(lowhol33,hol33,3,3);

1
TT:=  0.052

>discretemult(bighol33,hol33,3,3);

1
TT:=  1.003
\end{verbatim}}
\begin{example}\end{example}
{\scriptsize
Consider a holomorphic discrete series  of type "aaabbb" for $G=U(3,3)$ (see Sec. \ref{noncp}) with lowest $K$-type of dimension $d$. We verify that the multiplicities are bounded by $d$.
\begin{verbatim}
Hol33:=[[27/2,9/2,7/2],[5/2,3/2,-5/2]];
lowHol33:=[[15, 6, 5], [1, 0, -4]];
bigHol33:=[[15+1000, 6+1000, 5+1000], [1-1000, -1000, -4-1000]];
verybigHol33:=[[15+100000, 6+10000, 5+10000], [1-10000, -10000, -4-100000]];

>discretemult(lowHol33,Hol33,3,3);

 1
TT:= 0.069

>discretemult(bigHol33,Hol33,3,3);

 4
TT:=0.971

>discretemult(verybigHol33,Hol33,3,3);

4
TT:= 0.873
\end{verbatim}
}

\bigskip

Fix now a K-type $\mu,$ a direction $\vec v$ given by a dominant weight for $K$
( more details in Sec. \ref{asymp}) and  a discrete series parameter $\lambda$.
  The half-line $\mu+t\vec v$ stays inside the dominant chamber for $K$.
  A very natural  question is that of  investigating the behavior of the  multiplicity function as a function of $t\in \Z$, when we move from $\mu$ along  the positive $\vec v$ direction, that is the function
 $t \rightarrow m_{\mu+t\vec v}^\lambda, \ t\geq 0.$ The answer to this question will be given by two sets of datas: a  covering  of $\N$ determined by   a finite number of closed intervals $I_i\subset \R$ with integral end points, that is  $\N= \cup _{1\leq i\leq s}(I_i\cap \N)$,  together with  polynomial functions $P_i(t), \ 1\leq i\leq s,$   of degree bounded by $pq-(p+q-1)$,
  that compute  the multiplicity  on such  intervals $I_i$: in formula $m_{\mu+t\vec v}^\lambda=P_i(t)$ for $t\in I_i\cap \N.$

 \noindent We remark two aspects.
 First the $\{I_i\cap \N, \ 1\leq i\leq s\}$ constitutes a covering in the sense that we recover all of $\N$ but  $(I_j\cap \N)\cap(I_{j+1}\cap \N)$ can intersect in the extreme points and hence in this case $P_j$ and $P_{j+1}$ have to coincide on the intersection.  Secondly the  "intervals" $I_i\cap \N$   can be reduced   just to a point, so that the polynomial $P_i$ (if not constant) is not uniquely determined by its value on one point !. More generally, if the length of the interval $I_i$ is smaller that the degree of $P_i$, the polynomial $P_i$ is not uniquely determined.

\noindent Because of our  focus on the polynomiality aspects we keep  in the output of the algorithm the polynomials $P_i$ even if the intervals $I_i$ are   reduced to a point (or with small numbers of integral points).

\noindent In our application, $\mu$ will be the lowest K-type and  we will give some examples of the  situations occurring, in particular we will examine the following cases:
 \begin{itemize}
 \item The first example outputs  a  covering of $\N$  given by a unique  interval and a polynomial function on $\N$ that computes the multiplicity. Thus in this case,  all the integer points on the half  line $\mu+t\vec v, t\geq 0$ give rise to K-types that appear in the restriction of the discrete series, in particular $\vec v$ is an asymptotic direction,  (see Sec.  \ref{defas}  and \ref{asymp}),
  \item In the other examples, the  covering of $\N$ has  at least two intervals and illustrate different situations.
 \end{itemize}
 To compute, in the sense we just explained,   the multiplicity of the  $K$-type $\mu+t\vec v$
 in the discrete series with parameter  {\bf discrete}, $\mu$ being the lowest $K$-type moving in the  positive direction $\vec v$, labeled by {\bf direction} ,  the  {\bf command} is
\begin{verbatim}
>function_discrete_mul_direction_lowest(discrete,direction,p,q);


\end{verbatim}
\begin{example}\end{example}\label{Example1}
{\scriptsize
\begin{verbatim}
discrete:=[[5/2, -3/2], [3/2, -5/2]];
direction:=[[1,0],[0,-1]];

>function_discrete_mul_direction_lowest(discrete,[[1,0],[0,-1]], 2,2);

 [[t+1, [0, inf]]]
\end{verbatim}
($inf$ stands for  $\infty$ ).  }

\noindent Here the covering of $\N$ is $\N=[0,\infty]\cap \N$ and the polynomial is $P(t)=t+1.$
\medskip
The output explicitly compute:
$$m_{\mu+t\vec v} ^\lambda= t+1, \  t\in \N, \ t\geq 0$$
with   $\mu=[[7/2,-3/2],[3/2,-7/2]]$  the lowest $K$-type of the representation $\pi^{\lambda}$
(See Ex.\ref{Ex2} for computation of  the lowest $K$-type).
 Thus we compute the multiplicity, starting from the lowest $K$-type when we  are moving off it in the direction of $\vec{v}$.
In particular for $t=100$ we get $m_{\mu+100\vec v} ^\lambda=101$ as predicted in Ex.\ref{Ex1}, since
 \begin{verbatim}
discrete:=[[5/2, -3/2], [3/2, -5/2]]
\end{verbatim}
and   $\mu+100 \vec v$ is equal to

 \begin{verbatim}
 Krep:=[[207/2, -3/2], [3/2, -207/2]];
 \end{verbatim}

 \begin{example}\end{example} \label{Example2}
{\scriptsize
\begin{verbatim}
discrete:=[[9, 7], [-1, -2, -13]];
direction:= [[1, 0], [0, 0, -1]];

>function_discrete_mul_direction_lowest(discrete, direction,2,3);

[[1+(1/2)*t-(1/2)*t^2, [0, 0]], [1, [1, inf]]]
\end{verbatim}}

\noindent Here the covering is $\N=([0,0]\cap \N)\cup ([1,\infty]\cap \N$ and the polynomials are $P_1(t)=1+\frac12t-\frac12t^2$ on $[0,0]\cap \N$ and $P_2(t)=1$ on
$([1,\infty]\cap \N.$
\noindent  Observe that $[0,0]\cap \N=[0]$ is just a point and that  $P_1(0)=1$ as it should be, since $\mu$ is the lowest $K$-type.
Explicitly we simply compute
%\end{document}
%\begin{center}

$$m_{\mu+t\vec v} ^\lambda= 1, \  t\in \N, \ t\geq 0.$$

We conclude with one example in which the multiplicity grows:  the last polynomial is not  zero and has degree two.  We give more examples at the end of this article.

 \begin{example}\label{1}\end{example}
{\scriptsize
\begin{verbatim}
discrete:= [[57/2, 39/2, 3/2], [51/2, 5/2, -155/2]];
direction:=[[1, 0, 0], [0, 0, -1]];

>function_discrete_mul_direction_lowest(discrete,direction,3,3);

[(1/24)*t^4+(5/12)*t^3+(35/24)*t^2+(25/12)*t+1, [0, 16]], [-3059+(2242/3)*t-(133/2)*t^2+(19/6)*t^3, [17, 21]],
[-11914+(9597/4)*t-(4367/24)*t^2+(27/4)*t^3-(1/24)*t^4, [22, 40]], [100016-8664*t+228*t^2, [41, inf]]
 \end{verbatim}}
That is  for $\lambda=$discrete and $\mu=[[30, 20, 1], [26, 2, -79]]$ the lowest $K$-type
 \[
m_{\mu+t\vec v} ^\lambda=\left \{   \begin{tabular}{lc}
$(1/24)*t^4+(5/12)*t^3+(35/24)*t^2+(25/12)*t+1$&  $0\leq t \leq 16$ \\
$-3059+(2242/3)*t-(133/2)*t^2+(19/6)*t^3 $&$17\leq t \leq 21$ \\
$-11914+(9597/4)*t-(4367/24)*t^2+(27/4)*t^3-(1/24)*t^4$ &$22\leq t \leq 40$ \\
$100016-8664*t+228*t^2$ &$ t \geq 41$
   \end{tabular}\right.
\]
The time to compute the example is $TT := 0.835$ and the formula says for instance that
$m_{\mu+2000000\vec v} ^\lambda=911982672100016$

%\begin{example}\label{2}\end{example}
%{\scriptsize
%\begin{verbatim}
%discrete:=[[473, 39, 1], [3, 51, 5, -572]];
%direction:=,[[1, 0, 0], [0, 0, 0, -1]];
%
%>function_discrete_mul_direction_lowest(discrete,direction,2,4);
%
%[1+51/20 t-1/120 t^5-1/360 t^6+851/360 t^2+23/24 t^3+5/36 t^4,[0,0]],
%[1+31/12 t+19/8 t^2+11/12 t^3+1/8 t^4,[1,37]],
%[-3262622+2687514/5 t+73/240 t^5-1/720 t^6-13275857/360 t^2+64795/48 t^3-3977/144 t^4,[38,39]],
%[-265030+27790 t-1090 t^2+20 t^3,[39,44]],
%[-9631849+79305707/60 t+29/80 t^5-1/720 t^6-3399664/45 t^2+110609/48 t^3-5675/144 t^4,[45,45]],
%[27182687-212385511/60 t-31/40 t^5+1/360 t^6+69073219/360 t^2-132929/24 t^3+1619/18 t^4,[46,46]],
%[-784945+886169/12 t-20959/8 t^2+511/12 t^3-1/8 t^4,[47,83]],
%[469370132-337238937/10 t-167/240 t^5+1/720 t^6+363465857/360 t^2-774005/48 t^3+20897/144 t^4,[84,85]],
%[5790400-235000 t+2820 t^2,[86,inf]]
%\end{verbatim}}
%
%
%The time to compute the example is $TT := 19.487$ and the formula says for instance that
%that   for $\lambda=$discrete and $\mu=[[475, 40, 0], [103/2, 9/2, 5/2, -1147/2]]$ the lowest $K$-type
%$m_{\mu+20000000\vec v} ^\lambda=1127995300005790400$
%
%
%
%
%
%
%
%
%
%
%
%
%
%%%So we  have only one value of $t$ for which the multiplicity is not zero.
%%\noindent More generally we fix now a $K$-type labeled by $Kfix.$ To compute the multiplicity of a symbolic $K$-type labeled by {\bf Ksymb}
%% in the discrete series with parameter  labeled by {\bf discrete}, together with the region $D$, (determined by the parameter {\it discrete} and {\it Kfix}),  where the formula for the multiplicity is valid,  see Section \ref{symbolic}, the  {\bf command} is
%%\begin{verbatim}
%% >discrete_symbolic_and_equations(Ksymb,Kfix,discrete,p,q);
%%\end{verbatim}
%%\begin{example}\end{example}
%% {\scriptsize
%%\begin{verbatim}
%%Ksymb:=[[b[1], b[2]], [b[3], -b[1]-b[2]-b[3]];
%%Kfix:=[[7/2, -3/2], [3/2, -7/2]];
%%discrete:=[[5/2, -3/2], [3/2, -5/2]];\end{verbatim}}
%%
%%{\scriptsize \noindent with Ksymb satisfying the required dominance and
%%integrality conditions described at the beginning of the section. }
%%{\scriptsize \begin{verbatim}
%%>discrete_symbolic_and_equations(Ksymb,Kfix,discrete,2,2);
%%
%%[b[1]-5/2, D] where
%
%D={0<=b[2]+b[3],4<=b[1]+b[3],-3/2<=b[2],3/2<=b[3],5/2<=b[1], 5/2<=b[1]+b[2]+b[3], b[2]<=5/2,b[2]+b[3]<=4}
%\end{verbatim}}
%{\scriptsize\noindent In particular by computing the above for $Ksymb=Kfix+t\vec v=[[7/2+t, -3/2], [3/2, -7/2-t]]$ where $\vec v=[[1,0],[0,-1]]$  we get}
%{\scriptsize
%\begin{verbatim}
%
%>discrete_symbolic_and_equations(Ksymb,Kfix,discrete,2,2);
%
%[t+1, {0 <= 0, 0 <= 4, 0 <= t+1, -3/2 <= -3/2, -3/2 <= 5/2, 3/2 <= 3/2}]
%\end{verbatim}}
%Since the conditions that describe $D$ are all satisfied this simply means that $$m_{Kfix+t \vec v}^{discrete}=t+1, \ t\in N,\ t\geq 0$$
%But {\it Kfix} is the lowest K-type of this discrete series and so  the result we just obtained is precisely what we obtained in Ex.\ref{Example1}.
%
%

\bigskip

To compare with other parametrizations of the discrete series representations,
it may be useful also to give here the command for the lowest $K$-type of  the discrete series  with  parameter $\lambda=discrete$
%${\bf discrete}=[[\lambda_1,\cdots, \lambda_p],[\gamma_1,\cdots,\gamma_q]]$
of the group $U(p,q)$. The command  is:

\begin{verbatim}
>Inf_lowestKtype(p,q, discrete)
\end{verbatim}
 \begin{example}\label{Ex2}\end{example}
 {\scriptsize
\begin{verbatim}

> Inf_lowestKtype([[5/2,-3/2],[3/2,-5/2]],2,2);

 [[7/2,-3/2], [3/2, -7/2]]
\end{verbatim}}

Similarly, we may want to parametrize a representation of $K$ by its highest weight.
Then the {\bf command} is:
\begin{verbatim}
> voganlowestKtype(discrete,p,q)
\end{verbatim}
\begin{example}\end{example}
{\scriptsize
\begin{verbatim}
> voganlowestKtype([[5/2,-3/2],[3/2,-5/2]],2,2);

 [[3,-1], [1, -3]]
\end{verbatim}}

  Let us finally  recall the simple case of  the multiplicity function for  $G=U(2,1)$ (see Sec. \ref {noncp} for the notations).

 Choose as positive compact root the root $e_1-e_2$ and denote by  $w$  the corresponding simple reflection. Fix $\lambda=[[\lambda_1,\lambda_2],[\lambda_3]]$ the Harish Chandra parameter for a discrete series  representation. We assume $\lambda$ regular and $\lambda_1>\lambda_2.$ There are three chambers $\c_1$, $\c_2$, $\c_3$ and hence three systems of positive roots containing $e_1-e_2.$  Precisely $\c_1$ corresponds to the positive system $\{e_1-e_2, e_1-e_3, e_2-e_3\}$, $\c_2$ corresponds to the positive system $\{e_1-e_2, e_1-e_3, e_3-e_2\}$ and $\c_3$ to
   $\{e_1-e_2, e_3-e_1, e_3-e_2\}$.

   We examine the situation in which the discrete series parameter belongs to one of these  chambers.  Fig.\ref{chamber1} and  Fig.\ref {chamber2},    picture  the two chambers $\c_1$, $\c_2$  and evidentiate the values for $m_{\mu}^{\lambda}$ when $\lambda$ is in the chamber.
The black lines mark the  chambers containing the compact root $e_1-e_2$  and the red ones the system of  positive roots for the given chamber.

\begin{figure}[h]
 \begin{center}
  \includegraphics[]{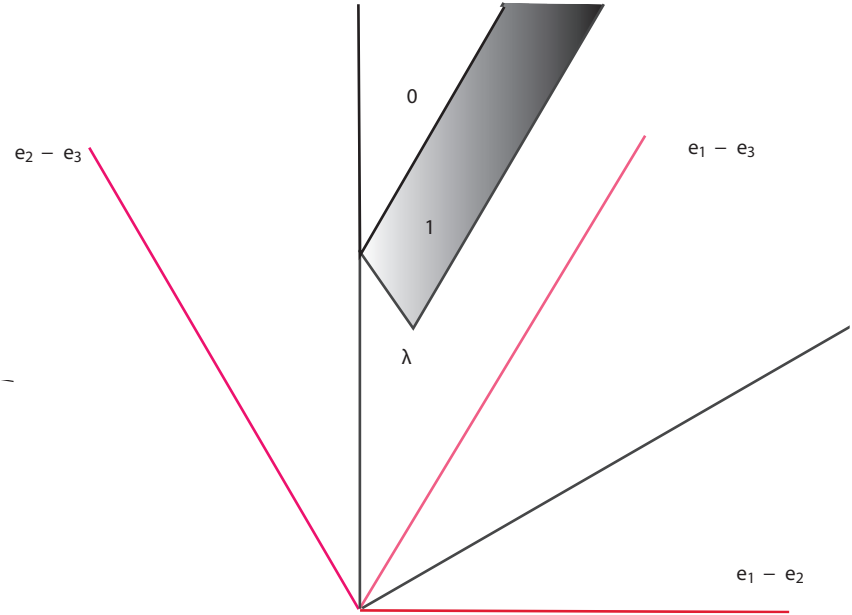}
\caption{ $m_{\mu}^{\lambda}$ for the  chamber $\c_1$ of U(2,1). \label{chamber1}}
\end{center}
\end{figure}
\begin{figure}[h]
 \begin{center}
 \includegraphics[scale=0.7]{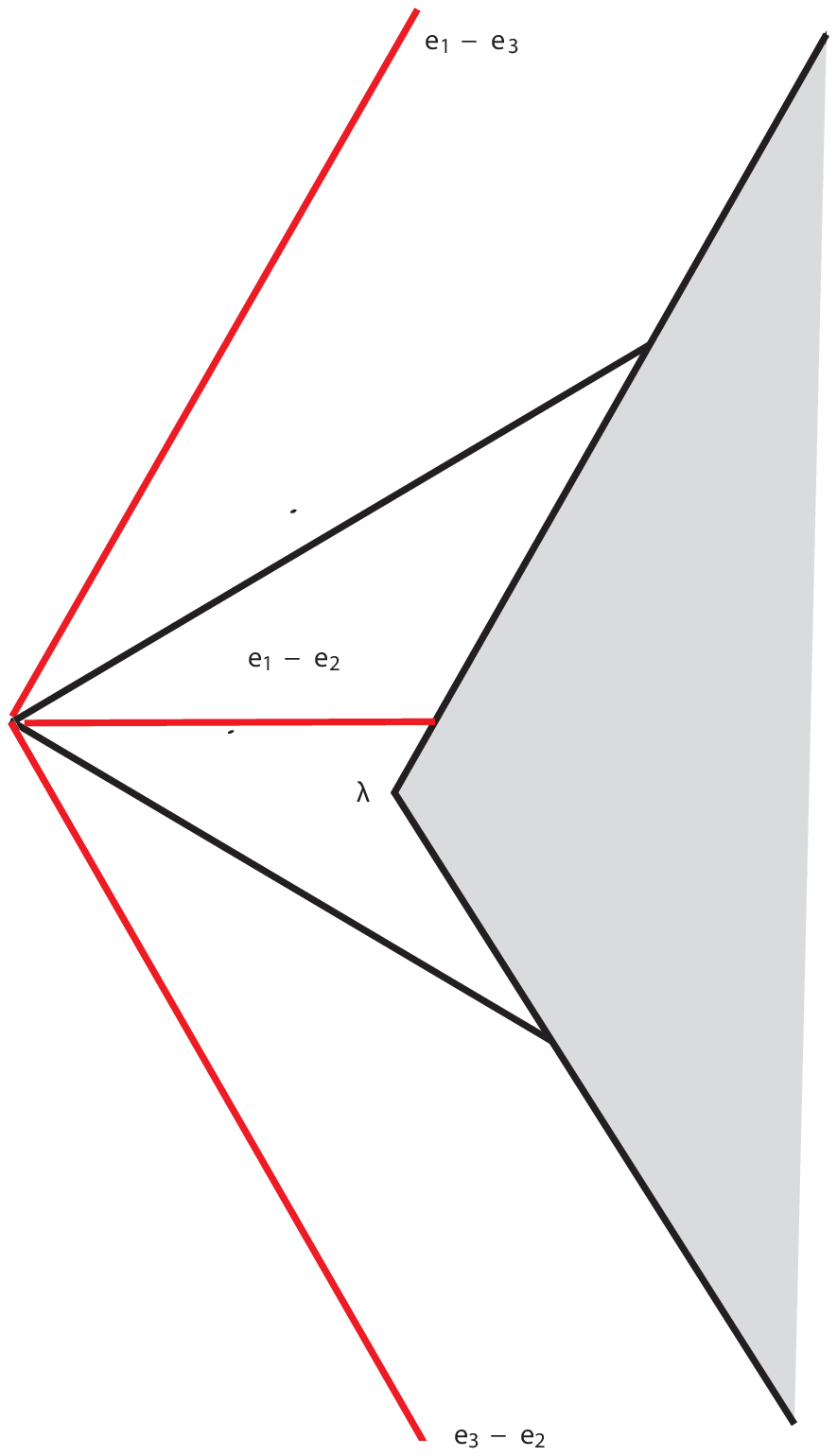}
\caption{ $m_{\mu}^{\lambda}$ for the  chamber $\c_2$ of U(2,1)\label{chamber2}}
\end{center}

\end{figure}

We give some examples concerning the two situations.
The parameter $\lambda$ in
  Fig.\ref{chamber1} is $$hol21 := [[2, 1], [-3]]$$

  In Figure \ref{chamber2}
  the  parameter $\lambda$ is
$$aba:=  [[2, -3], [1]].$$

\newpage \section{ Mathematical background}\label {math}

\subsection{ Notations}
\[\begin{array}{ll}
 G,\g& \mbox{semi-simple real Lie group, Lie algebra of G.}\\
K,\k& \mbox{maximal compact subgroup of $G$, Lie algebra of $K$.}\\
T,\t& \mbox{maximal torus of $K$, Lie algebra of $T$.}\\
P\subset \t^*& \mbox{lattice of weights of $T$.}\\

\Delta^+, \Delta^+(\lambda)\subset \t^*&\mbox{system of positive roots for $G$.}\\
\Delta_c^+, \Delta_c^+(\lambda) &\mbox{system of positive compact roots.}\\
\Delta_n^+, \Delta_n^+(\lambda) &\mbox{system of positive non compact roots.}\\
\Delta^+(A,B) &\mbox{system of positive  roots of parabolic type determined by $(A,B)$.}\\
\a, \a_c &\mbox{positive chamber for  $G$ and $K$ respectively}\\
\rho,\rho_c^+,\rho_n^+&\mbox{half the sum of positive, positive compact,  positive non compact roots.}\\
P_\g,P_\k& \mbox{set of  $G$ admissible, $K$ admissible parameters.}\\
P_\g^r,P_\k^r& \mbox{set of  $G$ admissible and regular , $K$ admissible and  regular  parameters.}\\
U &\mbox{r-dimensional real vector space; $x \in U$.}\\
V &\mbox{dual vector space of $U$,  $h \in V$.}\\
\ll \;,\; \rr &\mbox{the pairing between $U$ and $V$.}\\
%U_\Z &\mbox{ lattice in $U$ ; $n \in U_\Z$.}\\
V_\Z &\mbox{ lattice  of $V$.}\\
U_\Z &\mbox{ dual lattice in $U$.}\\
%\Gamma &\mbox{a lattice in $U$; $\gamma\in \Gamma$.}\\
%\Lambda:=\Gamma^* &\mbox{dual lattice in $V$; $\ll \Gamma,\Lambda\rr\subset \Z$,$\lambda\in \Lambda$.}\\
\mathcal A^+ &\mbox{a sequence of  vectors in $V_\Z$; $\alpha\in \mathcal A^+$.}\\
\tau &\mbox{a tope.}\\
T& \mbox{torus $U/U_\Z$;  $t\in T$.}\\
F &\mbox{ finite subset of   $T$.}\\
\Pi_{\CA^+}(h)& \mbox{ polytope defined by $\CA^+$.}\\
N_{\CA^+}(h)&\mbox{ number of integral points  for   $\Pi_{\CA^+}(h).$}\\
\c &\mbox{chamber.}\\
\JK_{\gc}&\mbox{ Jeffrey-Kirwan  residue}.\\
\Ires&\mbox{ iterated residue}.\\
\end{array}\]

 \subsection{Blattner formula and multiplicities}\label{BF}

 Let $ G$ be a reductive  connected  linear Lie  group  with  Lie algebra $\mathfrak{g}$ and denote by $K$ a maximal compact subgroup of $G$ with Lie algebra $\mathfrak{k}.$

We assume that  the ranks of $G$ and $K$ are equal.
Under this hypothesis  the group $G$ has discrete series representations.
Recall  Harish-Chandra's parametrization of discrete series representations.
We choose  a compact Cartan subgroup  $T\subset K$ with Lie algebra $\mathfrak t$.
Let $P\subset \t^*$ be the lattice  of weights of $T$.
They correspond to characters of $T$. Here if $\lambda\in P$, the corresponding  character of $T$ is $e^{i\lambda}$.
\noindent Let $\Delta^+\subset P$ be a positive system of roots
and $ \rho=\frac{1}{2}\sum_{\alpha \in \Delta^+} \alpha.$
Then the
subset
$\rho + P \subset \t^*$  does not depend of the choice of the positive system $\Delta^+$.
 We denote it by $P_\g$. We denote by $P_\g^r\subset P_\g$ the subset of
$\g$-regular elements. We can similarly define $P_\k^r.$ We denote by $\mathcal {\mathcal W}_c$ the Weyl group of $K$.
For any $\lambda \in P_\g^r$,
Harish-Chandra defined a discrete series representation
$\pi^\lambda$. Elements of $P_\g^r $ are called  Harish-Chandra parameters for $G$.
Two  representations, $\pi^{\lambda}$ and $\pi^{\lambda'}$,  coincide precisely when their parameters $\lambda,\lambda'$  are related by an an element of $\mathcal {\mathcal W}_c.$
Thus the set of discrete series representations is parametrized by $P_\g^r/\mathcal {\mathcal W}_c.$

In the same way we can parametrize the set $\hat K$ of classes of  irreducible finite dimensional representations  of $K$ by their Harish-Chandra parameter $\mu\in P_\k^r/\mathcal {\mathcal W}_c.$
  Once a positive system of compact roots is chosen, an element $\mu\in P_\k^r$ can be conjugated to a unique  regular element in the corresponding  positive chamber $\a_c\subset \t^*$ for the compact roots. We denote by $\tau_\mu\in \hat K$, or simply by $\mu \in \hat K$, the corresponding representation.

 A discrete series representation $\pi^{\lambda}$ is $K$-finite:
  $$\pi^{\lambda}\ _{|_{K}}= \sum_{\tau_\mu \in \hat K}m^{\lambda}_{\mu} \tau_{\mu}.$$
To determine $m^{\lambda}_{\mu} $,  that is the multiplicity of the $K$-type $\tau_{\mu}$  in $\pi^\lambda$,  is a basic problem in representation theory.

Blattner's formula, \cite {HS}, gives an answer to this problem. We need to introduce a little more notations before stating it.

We let $\Delta$ be the root system for $\mathfrak g$ with respect to $\mathfrak t$, $\Delta_c$ the system of compact roots , that is the roots of  $\mathfrak k$ with respect to $\mathfrak t$, and $\Delta_n$  the system of noncompact roots.
We let $\Delta^+$ be the unique positive system for $\Delta$ with respect to which $\lambda$ is dominant.  % and we denote by $\a$ the corresponding (closed) positive chamber for $\Delta^+.$
We write $ \rho=\frac{1}{2}\sum_{\alpha \in \Delta^+} \alpha$ , $ \rho_c=\frac{1}{2}\sum_{\alpha \in \Delta^+_c} \alpha$ and $ \rho_n=\frac{1}{2}\sum_{\alpha \in \Delta^+_n} \alpha$ where
$\Delta^+_c=\Delta^+ \cap \Delta_c$ and $\Delta^+_n=\Delta^+\cap \Delta_n$.
Therefore $P_\g=\rho+P$, $P_\k=\rho_c+P$ and if $\xi\in P_\g$, then $\xi+\rho_n\in P_\k$.

Write  $\a,\a_c\subset \t^*$ for the  (closed) positive chambers corresponding to  $\Delta^+$ and  $\Delta_c^+$.
We will also write $\Delta^+(\lambda), \Delta^+_c(\lambda) $  and $\Delta_n^+(\lambda)$ for  $\Delta^+, \Delta^+_c, \Delta^+_n$ if necessary to stress that these systems  depend on $\lambda$.

\noindent  Then for   $\mu \in  P_\k^r\cap \a_c$,  {\bf Blattner's formula} says:
 \begin{equation}  \label{blattner}
m^{\lambda}_{\mu} =\sum_{w \in \mathcal W_{c}} \epsilon(w)  \mathcal P_n(w\mu-\lambda-\rho_n)
\end{equation}
\noindent where, given $\gamma \in \mathfrak t^*$, we define $\mathcal P_n(\gamma)$ to be the number of distinct ways in which $\gamma$ can be written as
a sum of {\bf  positive  noncompact roots} (recall  our identification for which  $\Delta, \Delta_c \subset  \t^*$).
The number $\mathcal P_n(\gamma)$ is a well-defined integer, since the elements of $\Delta_n^+$  span a cone which
contains no straight lines. As usual, $\epsilon(w)$ will stand
for the sign of $w$. %Here again  recall that $\mu$, the Harish-Chandra parameter  of $\tau_{\mu}$,  is chosen to be in the positive chamber $\a_c$ for $\Delta^+_c$.
Remark that, as $\mu+\rho_c$ and  $\lambda+\rho_n+\rho_c$ are weights of $T$, the element  $w\mu-\lambda-\rho_n$ is a weight of $T$.

\noindent  It is convenient to extend the definition of $m_\mu^\lambda$ to an antisymmetric function on $P_\k^r.$   As we observed already an element $\mu\in P_\k^r$ can be conjugated to a unique  regular element in the corresponding  positive chamber $\a_c\subset \t^*$ for compact roots,
via an element  $w \in {\mathcal W}_c.$ Thus we define $m_{\mu}^\lambda=\epsilon(w)m_{w\mu}^\lambda.$  Of course with this generalization the multiplicity of the  K-type $\mu$  is  $| m_{\mu}^\lambda|$ and we can complete our picture for $U(2,1)$, Fig.\ref{figureu21}, in the following way:
\begin{figure}[h]
 \begin{center}
 \includegraphics[scale=0.8]{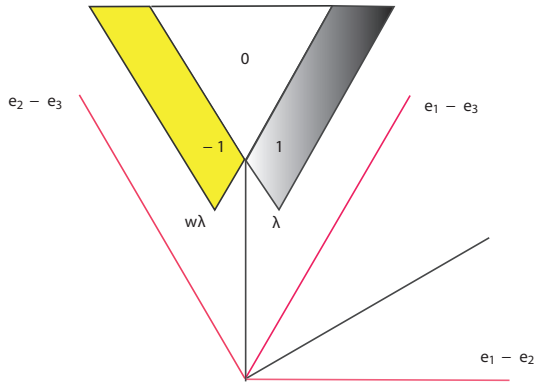}
\caption{$m_{\mu}^{\lambda}$ as antisymmetric function on $U(2,1)$. \label{figureu21}}

\end{center}
\end{figure}

\noindent The representation $\tau_{ lowest}$ with Harish-Chandra parameter
$\mu_{lowest}=\lambda+\rho_n$ is the lowest $K$-type of the representation $\pi^{\lambda}$ and occurs with multiplicity $1$. It is,
 in general, difficult to compute $m^\lambda_\mu$ for general $\mu$.

%SAY AGAIN SOMETHING ABOUT ATLAS

We will use Blattner's formula to compute $m^\lambda_\mu$.
 Our algorithm is based on a general scheme for computing partition functions using  multidimensional residues.
Note that  the presence of signs in Blattner formula doesn't even allow to say if a $K$ -type appears without fully computing  its multiplicity.

Recall that if $\CA^+$  is   a positive  root system of a semi-simple Lie algebra, the formula for the partition function has been used to compute  tensor product decomposition or  weight multiplicities (\cite {C1},\cite{C2}).

\subsection{Polynomial behavior of the Duistermaat-Heckman measure}

Recall Paradan's  results (\cite{par}) on the  behavior of the function $m_{\mu}^{\lambda}$ and its support.
For this it is useful to  first recall the semi-classical analog of Blattner formula.

Let us fix  a positive  system of roots $\Delta^+$ and consider the corresponding (closed) positive chambers $\a,\a_c\subset \t^*$ for $\Delta^+$ and  $\Delta_c^+$.
Our parameter $\lambda$ varies in $\a$ and it is non singular.
In this subsection, the integrality condition $\lambda\in P_{\g}^r$ is not required.

Let $O_\lambda\subset \g^*$ be the
 coadjoint orbit of $\lambda$.
It is a symplectic manifold, and thus is provided with a Liouville measure
$d\beta_\lambda$.
Let $p:O_\lambda\to \k^*$ be the projection. This is a proper map.
Each  coadjoint $K$-orbit in $\k^*$ intersect $\a_c\subset \t^*\subset \k^*$. Thus the projection of
 $O_\lambda$ on $\k^*$ is entirely determined by its intersection with $\a_c$.
We recall that the set $p(O_\lambda)\cap \a_c$ is a closed convex polyhedron.

\begin{definition}
The Kirwan polyhedron ${\rm Kirwan}(\lambda)$ is the polyhedron
$p(O_\lambda)\cap \a_c$.
\end{definition}

As far as we know, there are no algorithm to determine the Kirwan polyhedron.

A weak result on the  support of   ${\rm Kirwan}(\lambda)$ is that
${\rm Kirwan}(\lambda)$ is contained in $\lambda+{\rm Cone}(\Delta_n^+)$ where
${\rm Cone}(\Delta_n^+)$ is the cone generated by positive non compact roots.

The push-forward of the measure $d\beta_\lambda$  along the projection $p: O_\lambda\to \k^*$ gives us an invariant positive measure on $\k^*$.  By quotienting this measure by the signed Liouville measures $d\beta_{K\mu}$ of the coadjoint orbits $K\mu$  in $\k^*$, we obtain a ${\mathcal W}_c$-anti-invariant measure $dF^{\lambda}$ on $\t^*$. More precisely, for $\phi$ a test function on $\k^*$,
\begin{equation}\label{eq:DHa}
\int_{O_\lambda}  \phi(p(f)) d\beta_\lambda(f)   =\frac{1}{\# {\mathcal W}_c}\int_{ \t^*}
dF^\lambda(\mu)\epsilon_\mu \left(
 \int_{K\mu}
 d\beta_{K\mu}(f)\phi(f)\right).
\end{equation}
Here $\epsilon_\mu$ is the locally constant function on $\t^*$
 anti-invariant  by $ {\mathcal W}_c$ and equal to $1$ on the interior of the positive chamber $\a_c$.
We refer to $dF^{\lambda}$ as the Duistermaat-Heckman measure.

If $\CA^+=[\alpha_1,\ldots,\alpha_N]$ is a sequence of elements in $\t^*$ spanning a pointed cone,
 the multispline distribution  $Y_{ \mathcal A^+}$ is defined by the following formula.
For $\phi$ a test function on $\t^*$:

\begin{equation}\label{spline}
\langle Y_{\mathcal A^+},\phi\rangle=\int_{0}^{\infty}\cdots \int_0^{\infty} \phi(\sum_{i=1}^N t_i\alpha_i) dt_1\cdots dt_N.
\end{equation}

Then, for $\lambda\in \a$ and $\mu \in \t^*,$ we have the following result due to Duflo-Heckman-Vergne,\cite{dhv}:
\begin{equation} \label{eq:dhvpol}
dF^{\lambda}(\mu)=  \sum_{w\in
 {\mathcal W}_c} \epsilon(w) w(\delta_\lambda*Y_{\Delta_n^+}).
\end{equation}
where $\Delta^+_n$ is the system of positive noncompact roots defined by $\a.$

To simplify our next statements, assume that $G$ is semi-simple and has no compact factors. Then $\t^*$ is generated by non compact roots.
Recall that the spline function is given by a locally polynomial function on $\t^*$ well defined outside a finite number of hyperplanes (see the description later).
Choosing the Lebesgue measure $dh$ associated to the root lattice,
 we may identify the measure $Y_{\Delta_n^+}$ to a function, denoted by $Y_n^+$. Similarly, we identify the measure $dF^{\lambda}$ to a function $F^{\lambda}$ on $\t^*$.
Then we have,  almost everywhere, the semi-classical analogue of Blattner formula:

$$ F^{\lambda}(\mu)=
\sum_{w\in
 {\mathcal W}_c} \epsilon(w) Y^+_{n}(w\mu-\lambda)$$
 $\lambda\in \a$  and regular.

The (anti-invariant) function $F^{\lambda}(\mu)$  restricted to the positive compact chamber $\a_c$  is a non negative measure with  support the Kirwan polyhedron ${\rm Kirwan}(\lambda)$.

\bigskip

It follows  from the study of spline functions  that there exists a finite number of open polyhedral cones $R^i$ in $\a\times \a_c$ (so that the union of the cones  $\overline R^i$ cover $\a\times \a_c$)
and polynomial functions $p^i$  on $\a \times \a_c$ such that
$F^{\lambda}(\mu)$ is given, for  $\lambda\in \a,\mu \in \a_c, \ (\lambda,\mu) \in R^i,$
by the polynomial
 $p^i(\lambda,\mu)$ on $R^i(\lambda)=\{\mu\in \a_c,\  (\lambda,\mu)\in R^i\}.$
 In particular the Kirwan polyhedron ${\rm Kirwan}(\lambda)$ is the union of the regions
 $\overline{R^i(\lambda)}$ for which the polynomial $p^{i}$ restricted to
  $\overline{R^i(\lambda)}$  is not equal to $0$.
In fact the functions $p^i(\lambda,\mu) $ are linear combinations of   polynomial functions of $w\lambda-\mu$ where $w$ are some elements of  $\mathcal W_c$.

If $R$ is an open  cone in $\a\times \a_c$ such that $F^{\lambda}(\mu)$ is given by a polynomial formula $p^R(\lambda,\mu)$ when $(\lambda,\mu)\in R$, $\lambda \in \a$,  we say that $R$ is a domain of polynomiality and that $p^R$ is the local polynomial for $F^{\lambda}$ on $R$.

Let us finally recall that the local polynomials $p^{R}$ belong  to some particular space of polynomials
satisfying some system of partial differential equations.
For $\alpha$ a non compact root, consider the derivative $\partial_\alpha$.
We say that an hyperplane $H\in\t^*$ is admissible for $\Delta_n$ if $H$ is spanned by a subset of $\dim \t-1$ non compact roots, that is roots in $\Delta_n.$
We denote by $\mathcal H_n$ the set of admissible hyperplanes for $\Delta_n$.

\begin{definition}
A polynomial $p$ on $\t^*$ is in the Dahmen-Micchelli space $D(\Delta_n^+)$ if
$p$ satisfies the system of equations:
$$(\prod_{\alpha\in \Delta_n^+\setminus Q} \partial_\alpha) p=0$$
for any $Q\in \CH_n$.
\end{definition}

Remark that the space $D(\Delta_n^+)$ depends only of $\Delta_n$ and not of a choice of $\Delta_n^+$.

Then, the following result follows from Dahmen-Micchelli theory of the splines.

\begin{proposition}
For any domain of polynomiality $R$,
the polynomial $\mu \to p^{R}(\lambda,\mu)$ belongs to the space $D(\Delta_n^+).$
\end{proposition}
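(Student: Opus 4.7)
The plan is to reduce the statement, via the Duflo-Heckman-Vergne formula \eqref{eq:dhvpol}, to the classical Dahmen-Micchelli theorem for the multispline $Y_n^+$ itself, which asserts that each local polynomial piece of $Y_n^+$ belongs to $D(\Delta_n^+)$. Throughout, I treat $\lambda \in \a$ as fixed and regard $\mu \mapsto p^R(\lambda,\mu)$ as a polynomial on $\t^*$.

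First, I would recall the core Dahmen-Micchelli fact about the single multispline $Y_n^+$: for each admissible $Q \in \CH_n$, the operator $D_Q := \prod_{\alpha \in \Delta_n^+ \setminus Q}\partial_\alpha$ has the effect, in Laplace transform, of multiplying $\prod_{\alpha\in \Delta_n^+}\la\xi,\alpha\ra^{-1}$ by $\prod_{\alpha\in\Delta_n^+\setminus Q}\la\xi,\alpha\ra$ (up to sign), leaving the reduced product $\prod_{\alpha \in Q\cap \Delta_n^+}\la\xi,\alpha\ra^{-1}$. This is the Laplace transform of the multispline built from the roots lying in the hyperplane $Q$, and is therefore a distribution supported in $Q$. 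Consequently on every open domain of polynomiality of $Y_n^+$, the polynomial piece is annihilated by $D_Q$, i.e.\ each such local polynomial belongs to $D(\Delta_n^+)$.

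Next I would transport this property through the Weyl group action. The chain rule gives $\partial_\alpha\bigl(Y_n^+(w\mu-\lambda)\bigr) = (\partial_{w\alpha}Y_n^+)(w\mu-\lambda)$. Since $\mathcal W_c$ permutes $\Delta_n$, the assignment $\alpha\mapsto |w\alpha|$ (the unique positive representative of $\pm w\alpha$) is a bijection $\Delta_n^+\to\Delta_n^+$ that sends $Q\cap \Delta_n^+$ onto $wQ\cap \Delta_n^+$ and hence the complement $\Delta_n^+\setminus Q$ onto $\Delta_n^+\setminus wQ$. Using $\partial_{-\beta}=-\partial_\beta$ together with the admissibility of $wQ$ yields
$$
D_Q\bigl(Y_n^+(w\mu-\lambda)\bigr) \;=\; \pm\bigl(D_{wQ}\,Y_n^+\bigr)(w\mu-\lambda),
$$
and by the first step this right-hand side vanishes wherever $Y_n^+(\cdot)$ is represented by a single polynomial piece --- in particular on the open domain $R$, which by construction avoids all the translated singular hyperplanes of the individual summands.

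Summing \eqref{eq:dhvpol} over $w\in \mathcal W_c$, one concludes $D_Q F^{\lambda}\equiv 0$ on $R$. Since $F^{\lambda}$ agrees on $R$ with the polynomial $\mu\mapsto p^R(\lambda,\mu)$, the identity $D_Q\, p^R(\lambda,\mu)=0$ holds on a nonempty open set, hence holds as a polynomial identity in $\mu$. As this is required for every admissible $Q\in\CH_n$, we obtain $\mu \mapsto p^R(\lambda,\mu)\in D(\Delta_n^+)$. The one delicate point --- essentially the only obstacle --- is the combinatorial bookkeeping in the middle paragraph: verifying that $\mathcal W_c$ sends admissible hyperplanes to admissible hyperplanes and that after reshuffling signs via $\partial_{-\beta}=-\partial_\beta$ the product $\prod_{\alpha \in \Delta_n^+\setminus Q}\partial_{w\alpha}$ matches $\pm D_{wQ}$ exactly. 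Everything else is a direct Laplace-transform manipulation of multisplines and the standard piecewise-polynomial structure.
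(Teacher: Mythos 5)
Your proof is correct and takes essentially the route the paper intends: the paper just asserts that the proposition ``follows from Dahmen-Micchelli theory of the splines,'' and your argument is precisely that reduction, via the Duflo-Heckman-Vergne formula \eqref{eq:dhvpol}, to the Dahmen-Micchelli property of the local pieces of $Y_n^+$, transported through $\mathcal W_c$. The only cosmetic imprecision is the claim that $R$ ``by construction'' avoids every translated wall of every summand (an arbitrary domain of polynomiality need not), but this costs nothing: $D_Q F^{\lambda}$ is a distribution supported in the measure-zero union of those translated walls, so the polynomial $D_Q\,p^{R}(\lambda,\cdot)$ vanishes on a dense open subset of $R(\lambda)$ and hence identically.
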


\subsection{Quasi-polynomiality results}

Let us come back to the discrete setting.
Let us fix  as before a positive  system of roots $\Delta^+$ and consider the corresponding chambers $\a,\a_c\subset \t^*$ for $\Delta^+$ and  $\Delta_c^+$.
Fix $\lambda\in P_\g^r\cap \a$ and $\mu\in P_\k^r\cap \a_c$.
We can then define $m^{\lambda}_\mu$, the multiplicity of  $\tau_\mu$ in the discrete series
$\pi^{\lambda}$.

By definition, a quasipolynomial function on a lattice $L$
 is a function on $L$ which coincides with
a polynomial on each coset of some sublattice $L'$ of finite index  in
$L$. The subsets $P_\g,P_\k$  are shifted lattices  and we may say that a function $k$ on $P_\k^r$ is quasi polynomial  on $P_\g\times P_\k$ if the shifted function $k(\lambda-\rho,\mu-\rho_c)$ is quasipolynomial on the lattice $P\times P$.

\begin{theorem}
Let
$R$ be a domain of polynomiality  in $\a\times \a_c$ for  the Duistermaat-Heckman measure.
  Then there exists a quasi polynomial function $P^R$ on $P_\g\times  P_\k$
  such that
 $m_\mu^\lambda= P^{R}(\lambda,\mu)$ for any
 $(\lambda,\mu)\in \overline{R}\cap (P_\g^r\times P_\k^r),$ \ $\lambda \in \a, \mu\in \a_c.$
\end{theorem}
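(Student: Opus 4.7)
The plan is to reduce the statement to the known piecewise quasi-polynomial behavior of the Kostant partition function $\mathcal P_n$ attached to the sequence $\Delta_n^+$, in exact parallel with the way the polynomial behavior of the Duistermaat--Heckman function $F^\lambda$ reduces to the piecewise polynomial behavior of the multispline $Y_n^+$. The starting point is Blattner's formula
\begin{equation*}
m_\mu^\lambda=\sum_{w\in \mathcal W_c}\epsilon(w)\,\mathcal P_n(w\mu-\lambda-\rho_n),
\end{equation*}
which is the discrete counterpart of the identity for $F^\lambda(\mu)$ recalled in the previous subsection.

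First I would appeal to Dahmen--Micchelli theory applied to the vector system $\Delta_n^+$: the partition function $\mathcal P_n$ is quasi-polynomial on the lattice $P$ when restricted to each open chamber of the central hyperplane arrangement $\mathcal H_n$. On each such chamber $\gc$ it coincides with a quasi-polynomial $Q_\gc$ on the appropriate shifted lattice; this $Q_\gc$ is the discrete analog of the local polynomial of the multispline $Y_n^+$ on the same chamber, and satisfies a discrete, shifted version of the Dahmen--Micchelli differential equations recalled before the proposition above.

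Next I would verify that a domain of polynomiality $R\subset\a\times\a_c$ for the Duistermaat--Heckman measure is automatically adapted to $\mathcal P_n$ in the following sense: for every $w\in \mathcal W_c$ the affine map $(\lambda,\mu)\mapsto w\mu-\lambda-\rho_n$ sends $R$ into a single open chamber $\gc_w$ of $\mathcal H_n$. The reason is that the partition of $\a\times\a_c$ into domains of polynomiality for $F^\lambda$ is manufactured precisely by pulling back the chamber structure of $Y_n^+$ under the $|\mathcal W_c|$ affine maps $(\lambda,\mu)\mapsto w\mu-\lambda$; the additional translation by $\rho_n$ is harmless because $\mathcal H_n$ is central. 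Substituting $Q_{\gc_w}$ for $\mathcal P_n$ term by term on $R$, I would then set
\begin{equation*}
P^R(\lambda,\mu):=\sum_{w\in \mathcal W_c}\epsilon(w)\,Q_{\gc_w}(w\mu-\lambda-\rho_n),
\end{equation*}
which is a quasi-polynomial on $P_\g\times P_\k$ (the $\rho_n$-shift is absorbed in the passage from $P$ to $P_\g$, and $-\rho_c$ together with $w\in \mathcal W_c$ controls the $P_\k$-side). Blattner's formula yields $m_\mu^\lambda=P^R(\lambda,\mu)$ on $R\cap (P_\g^r\times P_\k^r)$.

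Finally I would extend this equality from $R$ to $\overline R\cap (P_\g^r\times P_\k^r)$. At a boundary lattice point, some of the arguments $w\mu-\lambda-\rho_n$ sit on walls of $\mathcal H_n$, yet $\mathcal P_n$ is given there by the closure convention of Dahmen--Micchelli, i.e.\ by the same quasi-polynomial $Q_{\gc_w}$ attached to the adjacent chamber; hence $P^R$ continues to compute $m_\mu^\lambda$ up to the boundary. I expect the main obstacle to be the compatibility assertion of the third paragraph: one must verify explicitly that the decomposition of $\a\times\a_c$ used in the spline discussion is simultaneously a common refinement of the pull-backs under all $w\in \mathcal W_c$ of the chamber structure of $\mathcal H_n$, and that, on every closed piece $\overline R$, the \emph{same} chamber $\gc_w$ is selected at each boundary lattice point. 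This is the discrete counterpart of what is stated without proof in the paragraph preceding the definition of a domain of polynomiality, and making it precise is what justifies the term-by-term replacement of $\mathcal P_n$ by $Q_{\gc_w}$.
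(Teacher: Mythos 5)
Your overall route is the one the paper itself relies on: Blattner's formula \eqref{blattner}, replacement of $\mathcal P_n$ term by term by the quasi-polynomial attached to the tope containing $w\mu-\lambda$, and the alternating sum over $\mathcal W_c$; this is exactly the mechanism of Sec.~\ref{symbolic} (Equation \eqref{symb}). The genuine gap is the step you yourself flag in your third paragraph, and it does not close by itself. With the paper's definition, a domain of polynomiality $R$ is \emph{any} open cone on which $F^{\lambda}(\mu)=\sum_{w}\epsilon(w)Y_{n}^{+}(w\mu-\lambda)$ is given by a single polynomial; it is \emph{not} defined as a cell of the arrangement obtained by pulling back the noncompact walls under the maps $(\lambda,\mu)\mapsto w\mu-\lambda$, $w\in\mathcal W_c$. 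So it is not ``automatic'' that each $w\mu-\lambda$ stays in a single tope $\gc_w$ as $(\lambda,\mu)$ runs over $R$: the alternating sum can remain polynomial across a pulled-back wall because the jumps of the individual terms cancel, and in that case the term-by-term substitution of $Q_{\gc_w}$ for $\mathcal P_n$ has no justification, nor is it clear a priori that the cancellation of jumps of the splines forces cancellation of the jumps of the corresponding quasi-polynomials. Your argument therefore proves the theorem only for domains contained in a single cell of the common refinement --- precisely the regions $R(\lambda_0,\mu_0)$ of Sec.~\ref{symbolic}, which is also all that the paper's own machinery establishes; the statement for an arbitrary domain of polynomiality is Paradan's theorem \cite{par} and needs an additional argument.

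On the extension to $\overline R$: the conclusion is right but not for the reason you give. There is no ``closure convention'', and the shift by $\rho_n$ is not innocuous ``because the arrangement is central'' --- it is exactly what makes the boundary work. The chamber quasi-polynomial computes the partition function on the larger region $\gc-Z(\CA^+)\supset\overline{\gc}$ (Theorem \ref{theo.main}), and after the $\rho_n$-shift this becomes Theorem \ref{theoremtope}: $N_{\CA^+}(\nu-\rho_n)={\bf N}^{\tau}_{\CA^+}(\nu-\rho_n)$ for all $\nu\in\overline{\tau}\cap P_n$. Invoking that statement for each $\tau_w$ is what legitimizes evaluating your $P^R$ at lattice points of $\overline R$ where some $w\mu-\lambda$ lands on a noncompact wall; without it, validity of $Q_{\gc_w}$ at wall points is not automatic.
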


(In fact the functions $P^R$ are linear combinations of  quasi polynomial functions of $w\lambda-\mu$ where $w$ are some elements of  $\mathcal W_c$.)

The $K$-types occurring with non zero multiplicity in $\pi^{\lambda}$ are such that $\mu$ is in the interior of the Kirwan polyhedron ${\rm Kirwan}(\lambda)$.
In particular the lowest $K$-type $\mu_{lowest}$ is in the interior of   ${\rm Kirwan}(\lambda)$.
In particular all the $K$-types occurring with non zero multiplicity in $\pi^{\lambda}$ are such that $\mu$ is in the interior of the cone $\lambda+{\rm Cone}(\Delta_n^+)$.
We believe they are contained in  the cone
$\mu_{lowest}+{\rm Cone}(\Delta_n^+)$, but we do not know if this assertion is true or not (by Vogan's theorem, they are contained in  $\mu_{lowest}+{\rm Cone}(\Delta^+)$) .

If $v \in \t^*$, we say that $v$ is an    {\bf asymptotic  direction}, \label{defas} if the line
$\mu_{\rm lowest}+t v$ is contained in ${\rm Kirwan}(\lambda)$ for all $t\geq 0$.
The set of asymptotic directions form a cone, which determines the wave-front set of $\pi^{\lambda}|_K$.

 For the holomorphic discrete series, the descrition of the cone of asymptotic diections is known. In fact if the lowest $K$-type of $\pi^{\lambda}$ is a one dimensional representation of $K$, the exact support of the function $m_\mu^{\lambda}$ has been determined  by  Schmid.

We will explain in  Section \ref{general}  how to compute  regions of polynomiality $R$ and the quasi-polynomial $P^R$.

The quasi polynomials $P^{R}$ are in some particular space of  quasi polynomials
satisfying some system of partial difference equations.
For $\alpha$ a non compact root, consider the difference operator $\nabla_\alpha$ acting on $\Z$ valued functions on $P_\k$
 by
 $$(\nabla_\alpha k)(\mu)=k(\mu)-k(\mu-\alpha).$$

\begin{definition}
A quasi polynomial $L$ on $P_\k$ is in the Dahmen-Micchelli space $DM(\Delta_n^+)$ if
$p$ satisfies the system of equations:
$$(\prod_{\alpha\in \Delta_n^+\setminus Q} \nabla_\alpha) L=0$$
for any $Q\in \CH_n$.
\end{definition}
Then, the following result follows from Dahmen-Micchelli theory of partition functions.
\begin{proposition}
The quasi polynomial $\mu \to P^{R}(\lambda,\mu)$ belongs to the space $DM(\Delta_n^+).$
\end{proposition}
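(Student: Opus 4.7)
My plan is to derive the proposition directly from Blattner's formula (\ref{blattner}) combined with the fundamental theorem of Dahmen--Micchelli for the Kostant-type partition function $\mathcal{P}_n$ associated with the sequence $\Delta_n^+$.

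First, I would recall the Dahmen--Micchelli theorem in the form I need: the partition function $\mathcal{P}_n$ is quasi-polynomial on the connected components of the complement of the admissible hyperplanes in $\mathcal{H}_n$, and each local quasi-polynomial representative belongs to $DM(\Delta_n^+)$. This is the discrete analogue of the polynomial statement already quoted for the spline $Y_{\Delta_n^+}$, and it is the substantive input of the proof; I would cite it rather than reprove it.

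Next, I would fix a domain of polynomiality $R \subset \a \times \a_c$ for the Duistermaat--Heckman measure, and for each $w \in \mathcal{W}_c$ consider the translated-reflected function $g_w(\lambda,\mu) = \mathcal{P}_n(w\mu - \lambda - \rho_n)$. I would check the key substitution lemma: since $\mathcal{W}_c$ permutes $\Delta_n$ and stabilises $\mathcal{H}_n$, and since $\nabla_\beta$ satisfies $\nabla_\beta (g \circ w) = ((\nabla_{w\beta}\, g)) \circ w$ up to translation, the pullback by $\mu \mapsto w\mu - \lambda - \rho_n$ sends the local $DM(\Delta_n^+)$-quasi-polynomial representatives of $\mathcal{P}_n$ to quasi-polynomials still annihilated by $\prod_{\alpha \in \Delta_n^+ \setminus Q} \nabla_\alpha$ for every $Q \in \mathcal{H}_n$; in other words, each $\mu \mapsto g_w(\lambda,\mu)$, restricted to $R(\lambda)$, is represented by a quasi-polynomial lying in $DM(\Delta_n^+)$.

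Finally, I would sum over $w$ with signs $\epsilon(w)$. Because $R$ is a domain of polynomiality for the semi-classical object $F^\lambda$, each of the walls relevant to $\mathcal{P}_n(w\mu - \lambda - \rho_n)$ is avoided by the interior of $R$, and Blattner's formula (\ref{blattner}) identifies $m_\mu^\lambda$ with $P^R(\lambda,\mu)$ on $R \cap (P_\g^r \times P_\k^r)$. Since $DM(\Delta_n^+)$ is a vector space, the signed sum $\mu \mapsto P^R(\lambda,\mu) = \sum_{w\in\mathcal{W}_c} \epsilon(w)\, g_w(\lambda,\mu)$ lies in $DM(\Delta_n^+)$, which is the claim.

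The step I expect to be the main obstacle is the substitution lemma in the second paragraph: one must verify carefully that the pullback by $\mu \mapsto w\mu - c$ preserves $DM(\Delta_n^+)$, which requires observing that $\mathcal{W}_c$ acts on $\Delta_n$ (so the generating set of difference operators is permuted) and that translation by a fixed vector $c$ commutes with all $\nabla_\alpha$. A secondary, more bookkeeping-style difficulty is aligning the hyperplane arrangements: one needs to know that the arrangement governing $F^\lambda$ already refines the union over $w$ of the pullbacks of $\mathcal{H}_n$, so that a single quasi-polynomial representative of each $g_w(\lambda,\cdot)$ is valid throughout $R(\lambda)$.
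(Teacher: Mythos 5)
Your argument is correct and is essentially the paper's own route: the paper simply asserts the result "follows from Dahmen--Micchelli theory of partition functions," having already noted that $P^R$ is a signed sum over $w\in\mathcal W_c$ of quasi-polynomials of $w\mu-\lambda$ coming from Blattner's formula, and your write-up just makes explicit the two points the paper leaves implicit (the $DM(\Delta_n^+)$ property of the local pieces of $\mathcal P_n$, and its stability under the affine substitutions $\mu\mapsto w\mu-\lambda-\rho_n$, using that $\mathcal W_c$ permutes $\Delta_n$ and that translations commute with the $\nabla_\alpha$). Nothing essential differs, so no further comparison is needed.
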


\subsection{Aim of the algorithm: what can we do?}\label{aim}
Our algorithm addresses    the following  questions for $U(p,q)$. All of these questions will be analyzed in more details in  Sec.\ref{general}.
\subsubsection{Numeric}

We enter as input two  parameters $\lambda,\mu\in P_\g^r\times  P_\k^r$.
The output is the integer $m_\mu^{\lambda}$, see Sec.\ref{Bnum}.

\subsubsection{Regions of polynomiality}
The input is two parameters $\lambda_0,\mu_0\in P_\g^r\times  P_\k^r$.
Let $\a,\a_c$ be the chambers determined by $\lambda_0$ and $\mu_0$.
We also give two symbolic parameters $\lambda,\mu.$

Then the output  is a closed  cone  $R(\lambda_0,\mu_0) \subset \a\oplus \a_c$ described by linear inequations in $\lambda,\mu$, containing $(\lambda_0,\mu_0)$ and a quasi-polynomial $P$  in $(\lambda,\mu)$
such that $m_\mu^{\lambda}=P(\lambda,\mu)$ for any $(\lambda,\mu)\in R(\lambda_0,\mu_0)\cap
(P_\g^r\times P_\k^r)$.

We worked out part of this  program  for $U(p,q)$, but it is still not fully implemented.

In particular, for the moment, we are not able to  produce a cover of $\a\times \a_c$ by such regions.
The number of regions needed grows very fast with the rank. Furthermore, we are not able to decide when  we have  finished to cover    $\a\times \a_c.$

%We can  though find these regions if we have only $\mu$ as a variable: in this case we compute  $m_\mu^{\lambda_0}$, (see Section \ref{symbolic}).
%
%
%%We don' t have an efficient way to eliminate redundant inequalities and this is one of the reason the algorithm  works only for very small ranks.
% We aim to study the largest regions of polynomiality, but we should remark that this region can be very small...
% I CANNOT FIND AN EXAMPLE  GOOD ENAUGH, PERHAPS A POINT?
%  The algorithm we implemented works only for very small ranks.
%

\subsubsection{Asymptotic directions}\label {asdir}

We implemented  (for $U(p,q)$) a simpler question which gives a test for asymptotic directions.

Let's  consider as input parameters $\lambda_0$ in $P_\g^r$ and a weight $\vec{v}\in \a_c$.
Let $\mu_0$ be  the lowest $K$-type of $\pi^{\lambda_0}$.
The line $t\mapsto (\lambda_0,\mu_0+ t\vec{v})$ cross  domains of polynomiality $R^i$ at a certain finite number of points $0\leq t_1<t_2<\cdots<t_s$. Let us define $t_0=0,t_{s+1}=\infty.$ Then we study the function $P(t)= m^{\lambda_0}_{\mu_0+tv}, \ t\in \N, t\geq 0.$

 We can find    polynomials $q_{[t_i,t_{i+1}]}$ of degree bounded by $pq-(p+q+1)$  such that
$P(t)=q_{[t_i,t_{i+1}]}(t)$ when $t_i\leq t\leq t_{i+1}$ for $i=0,\ldots, s$ and $t\in \N.$
In particular, the direction $\vec{v}$ belongs to the asymptotic cone  of the Kirwan polyhedron, if and only if our last polynomial $q_{[t_{s},\infty]}$
is non zero, see Sec.\ref{asymp}.

As we discussed in the first part, if the intervals are two small, these polynomials are not uniquely determined. However the last interval is infinite, and the last polynomial is well determined.  If this last data  is non zero, then $\vec{v}$ is in the wave front set of $\pi^{\lambda}$.  The reciproc is not entirely clear.
Indeed for a direction to be in the wave front set, it is sufficient to  be approached in the projective space by   lines $\R^+\mu_n$ with $\mu_n\in P_\k^r\cap \a_c$ such that the multiplicity  $m_{\mu_n}^{\lambda}$ is non zero, and  the sequence $\mu_n$ is  going to the infinity in $\a_c$.
 Thus  we do not know if a rational  line $\mu_0+t \vec{v}$ contained in the Kirwan polytope  could totally avoid the support of the function $m_{\mu}^{\lambda}$.
We do not think this is possible.

\section{Partition functions: the general scheme.}

 \subsection{Definitions}
 Let $U$ be a $r$-dimensional real vector space and
$V$ be its dual vector space. We fix the choice of a Lebesgue measure $dh$
on $V$. Consider a list   $\CA^+$ of non-zero generators for $V$  given by
$$\CA^+=[\alpha_1,\alpha_2,\ldots,\alpha_N].$$

We recall several results concerning partition functions that appear in \cite{BBCV} in the general context. However, let us describe  right away the system of vectors $\Delta^{+}(A,B)\subset A_r$  that  will appear in our programs and that describe   parabolic subsystem of $A_r$,
(as we said the same method could be applied to other parabolic root systems).

 \begin{example}\label{Ar}
 \begin{itemize}

%$\bullet$
\item  Let $E$ be an $r$-dimensional vector space with basis $e_i$  ($i=1$, \ldots, $r+1$). Consider the sequence
 $$A^+_r=[e_i-e_j\,|\,1\leq i<j\leq {r+1}].$$  This is a  system of  positive roots  of type $A_{r}$.
 We  let $V$ to be the vector  space
  $$V=\left\{h=\sum_{i=1}^{r+1} h_ie_i\in E\,\Big|\,  \sum_{i=1}^{r+1} h_i=0\right\}.$$
 Let $V_\Z$ be the lattice spanned by $A^+_r$.
We may identify $V$ with
$\R^r$  by $h\mapsto [h_1,h_2,\ldots, h_r]$.
In this identification the lattice  $V_\Z$  is identified with $\Z^r$.

%$\bullet$

\item Let $A,B$ be two complementary subsets of $[1,2,3,\ldots, p+q]$ with $|A|=p$ and $|B |=q$.
Let $r=p+q-1$.
We define $\Delta^+(A,B)$ as the sublist of $A_{r}^+$ defined by
$$\Delta^+(A,B)=[e_i-e_j\,|\,1\leq i<j\leq {r+1}], \text{with} \ \ i\in A, j\in B\, \text{or} \   i\in B, j\in A.$$
 These systems are  the system of  positive  roots  for the maximal parabolics of $\mathfrak g \mathfrak l (r+1)$, for different  choices of orders.
\end{itemize}

 \end{example}

Let us go back to the general scheme.

\noindent For any subset $S$ of $V$, we denote by
$\CC(S)$ the convex cone generated by non-negative linear
combinations of elements of $S$. We assume that the convex cone
$\CC(\CA^+)$ is acute in $V$ with non-empty interior.

\noindent If $S$ is a subset of $V$, we denote by $<S>$ the vector space spanned by $S$.
\medskip

\begin{definition} \label{admissible}
A hyperplane $H$ in $V$ is \emph{$\CA^+$-admissible} if it
is spanned by a set of vectors of $\CA^+$.

When $\CA^+=\Delta^+_n(\lambda)$ or $\Delta^+(A,B)$ then an $\CA^+$-admissible hyperplane will be also called   a \emph {noncompact wall}.
\end{definition}

\noindent%\subsection  {Chambers}
\textbf  {Chambers}

\noindent  Let $\CV_{sing}(\CA^+)$ be the union of the boundaries of the cones
$\CC(S)$, where $S$ ranges over all the subsets of $\CA^+$. The
complement of $\CV_{sing}(\CA^+)$ in $V$ is by definition the
open set $\CC_{reg}(\CA^+)$ of \emph{regular} elements. A connected
component $\gc$ of $\CC_{reg}(\CA^+)$ is called a \emph{chamber} of
$\CC(\CA^+)$.
Remark that, in our definition,
            the complement of the cone  $\CC(\CA^+)$  in $V$ is a chamber that we call the \emph{exterior} chamber.
The chambers contained in $\CC(\CA^+)$, that we will call \emph{ interior } chambers, are open convex cones.
Sometimes chambers are called cells or big cells by  other authors.

\noindent The faces of the interior chambers span admissible hyperplanes.

\noindent The following pictures illustrate the situation for the (interior) chambers in the case of $A_3^+,$ Fig.\ref {figurea3}, and   the (interior) chambers for various subsystems of   $A_3^+$, of type $\Delta^+(A,B)$ relative to $U(2,2)$, Fig.\ref{figurea3nonc}.

%\begin{figure}[h]
% \centering
% \includegraphics[]{includeparabolic.eps}
%\caption{Chamber for parabolic subsystem of $A_3^+$}
%\label{parabolic}
%\end{figure}
\begin{figure}[h!]
 \begin{center} \includegraphics[scale=0.9]{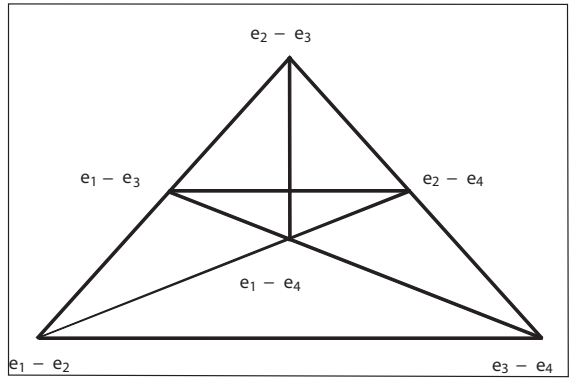}
\caption{The 7 chambers for $A^+_3$ \label{figurea3}}
\end{center}
\end{figure}
\newpage
\begin{figure}[h!]
 \centering
 \includegraphics[scale=0.8]{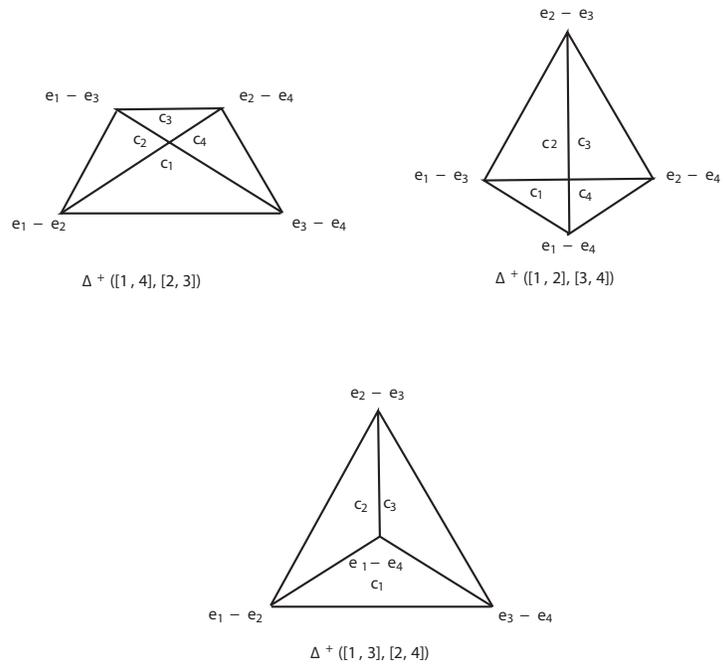}
\caption{Parabolic subsystems of   $A_3^+$ \label{figurea3nonc}}
\end{figure}

\newpage
\noindent\textbf  {Polytopes}

\noindent
We consider the space $\R^N$ with its standard basis $\omega_i$
and Lebesgue measure $dx$.
If $x=\sum_{i=1}^N x_i\omega_i\in\R^N$, we simply write $x=(x_1,\ldots,x_N).$
Consider the surjective map $A:\R^N\to V$ defined by
$A(\omega_i)=\alpha_i.$

\noindent If $h\in V$, we define the convex polytope $\Pi_{\CA^+}(h)$ consisting of all non-negative solutions of the system
of $r$ linear equations
$\sum_{i=1}^N x_i\alpha_i=h$
that is
$$\Pi_{\CA^+}(h)=\left\{x=(x_1,\ldots,x_N) \in\R^N\,|\,Ax=h, \ x_i\geq 0\right\}.$$
\medskip

We call   $\Pi_{\CA^+}(h)$ a partition polytope (associated to $\CA^+$ and $h$).

\bigskip

We identify the spline distribution $Y_{\mathcal A^+}$  (by Formula \ref{spline})
to a function  still denoted by $Y_{\mathcal A^+}$  using $dh$.

Recall the following theorem, which follows right away from Fubini theorem, (\cite{baldonivergne}, \cite{BDV}.)

\begin{theorem}\label{theo:volume}
The value of the spline function $Y_{\mathcal A^+}$   at $h$ is the volume of the partition polytope  $\Pi_{\CA^+}(h)$ for the quotient measure $dx/dh$.

\end{theorem}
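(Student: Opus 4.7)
The plan is to unwind both sides against an arbitrary test function $\phi$ on $V$ and apply Fubini to the linear surjection $A:\R^N\to V$ that sends the standard basis vector $\omega_i$ to $\alpha_i$.

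First I would rewrite the defining formula \eqref{spline} as a single integral over the positive orthant. Since $\sum_{i=1}^N t_i\alpha_i = A(t_1,\ldots,t_N)$, the definition reads
\begin{equation*}
\langle Y_{\CA^+},\phi\rangle \;=\; \int_{\R^N_{\geq 0}} \phi(Ax)\, dx.
\end{equation*}
This is the only content of the definition of $Y_{\CA^+}$ as a distribution.

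Next I would invoke the standard disintegration (Fubini) along the linear fibration $A:\R^N\to V$, using the chosen Lebesgue measures $dx$ on $\R^N$ and $dh$ on $V$. Because $A$ is surjective (the $\alpha_i$ span $V$, by hypothesis on $\CC(\CA^+)$), there is a canonical quotient measure $dx/dh$ on each fiber $A^{-1}(h)$, characterized by the disintegration identity $dx = dh\,(dx/dh)$. Applied to the integrand $\phi(Ax)$, this gives
\begin{equation*}
\int_{\R^N_{\geq 0}} \phi(Ax)\,dx \;=\; \int_V \phi(h)\Bigl(\int_{A^{-1}(h)\cap \R^N_{\geq 0}} \tfrac{dx}{dh}\Bigr)\,dh.
\end{equation*}
The inner integral is, by the very definition of $\Pi_{\CA^+}(h)$ as $\{x\in\R^N_{\geq 0}:Ax=h\}$, the volume of the partition polytope computed with the quotient measure $dx/dh$. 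Note that since $\CC(\CA^+)$ is acute, the polytope $\Pi_{\CA^+}(h)$ is compact, so this volume is finite for every $h$, and vanishes off $\CC(\CA^+)$.

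Combining the two displays,
\begin{equation*}
\langle Y_{\CA^+},\phi\rangle \;=\; \int_V \phi(h)\,\vol\bigl(\Pi_{\CA^+}(h)\bigr)\,dh,
\end{equation*}
and since $\phi$ was arbitrary, this identifies the distribution $Y_{\CA^+}$ with the function $h\mapsto \vol(\Pi_{\CA^+}(h))$ under the chosen Lebesgue measure $dh$, which is the assertion of the theorem. The only subtle point — and the step I would write out most carefully — is the justification of the disintegration $dx = dh\,(dx/dh)$ and the consistency between the quotient measure on the fiber and the Lebesgue measure one would obtain by choosing a complement to $\ker A$; this is entirely a linear-algebra bookkeeping statement and does not use any property of $\CA^+$ beyond spanning $V$.
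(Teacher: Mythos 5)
Your proposal is correct and coincides with the paper's own argument: the paper simply remarks that the theorem ``follows right away from Fubini theorem'' (citing \cite{baldonivergne}, \cite{BDV}), and the disintegration of $dx$ along $A:\R^N\to V$ into $dh$ times the quotient measure $dx/dh$ that you spell out is exactly that Fubini step. Your additional remarks (compactness of $\Pi_{\CA^+}(h)$ from acuteness of $\CC(\CA^+)$, testing against $\phi$) are the right bookkeeping and introduce nothing beyond the paper's intended route.
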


The spline function   $Y_{\mathcal A^+}$  is given by a polynomial formula on each interior chamber. It is identically equal to $0$ on the exterior chamber.

\bigskip

\noindent\textbf  {Partition functions}

\noindent Let $V_\Z$  be a lattice in $V$  and suppose now  that the elements $\alpha_i$  of  our sequence   $\CA^+$  belong to the lattice $V_\Z$.
%\begin{definition}
\noindent   If $h\in V_\Z$ we define
$N_{\CA^+}(h)=\vert\Pi_{\CA^+}(h)\cap \Z^N \vert,$
the number of integral points in the partition polytope
$\Pi_{\CA^+}(h).$

%\end{definition}
Thus $N_{\CA^+}(h)$ is the number of solutions $(x_1,x_2, \ldots, x_N)$,
in non-negative integers $x_j$, of the equation $\sum_{j=1}^N x_j
\alpha_j={h}.$

\noindent The function $h\mapsto N_{\CA^+}(h)$ is called the partition function
of $\CA^+$. We refer to it as   {\bf  Kostant partition function}.

We will see after stating Theorem~\ref{theo.main} that
$h\mapsto N_{\CA^+}(h)$ is
quasipolynomial on each chamber.

Let us recall briefly the theory that allows to compute
Kostant partition functions.

\medskip
\noindent\textbf  {Jeffrey-Kirwan residue}

 Let $\nu$ be a subset of $\{1,2,\ldots,N\}$.
We will say that $\nu$ is \emph{generating} (respectively
\emph{basic}) if the set $\{\alpha_i\,|\,i\in\nu\}$ generates
(respectively is a basis of) the vector space $V$.
We write
$\bases(\CA^+)$ for the set of basic subsets.

Let  $\CR_{\CA^+}$ be the ring of rational
functions  on $U$, the dual vector space to $V$, with poles on hyperplanes determined by kernel of elements $\alpha \in \CA^+$.

  $\CR_{\CA^+}$
is $\Z$-graded by degree. Every function in   $\CR_{\CA^+}$
 of degree $-r$  decomposes (see \cite{BriVer97})  as the sum  of basic fractions $f_\sigma,$
 $f_\sigma=\frac{1}{\prod_{i\in\sigma}\alpha_i}, \ \sigma\in\bases(\CA^+)
$
and degenerate fractions; here degenerate fractions are those for which
the linear forms in the denominator do not span $V$.

Now having
fixed a chamber $\gc$, we define a functional $\JK_{\gc}(f_{\sigma})$ on
$\CR_{\CA^+}$ called the Jeffrey-Kirwan residue (or
JK \emph{  residue}) as follows:
\begin{equation} \label{defi.jk}
\JK_{\gc}(f_\sigma)=
\begin{cases}
  \vol(\sigma)^{-1},&\mbox{if }\gc\subset\CC(\sigma),\\
  0, &\mbox{if }\gc\cap\CC(\sigma)=\emptyset\
\end{cases}
\end{equation}
where  $\sigma\in\bases(\CA^+)$ and $\vol(\sigma)$ is  the volume of the parallelotope
$\sum_{i=1}^r[0,1]\alpha_i$ computed for the measure $dh$.

There exists a linear form $\JK_{\gc}$, that we call the Jeffrey-Kirwan  residue, on $\CR_{\CA^+}$   such that $\JK_{\gc}$ takes the above values on the elements $f_\sigma$, and  is equal to $0$ on  a degenerate fraction or
on  a rational function of pure degree different from $-r$.

If $\gc$ is the exterior chamber, then clearly $\JK_\gc$ is equal to $0$, as $\gc$ is not contained in $\CC(\CA^+)$.

We may go further and extend the definition of the Jeffrey-Kirwan residue  to the space
${\widehat\CR}_{\CA}$ which is the space consisting of functions
$P/Q$ where $Q$ is a product of powers of the linear forms
$\alpha_i$ and $P=\sum_{k=0}^{\infty}P_k$ is a formal power
series. Then we just define, if $Q$ is of degree $q$,
$$\JK_{\gc}(P/Q)=\JK_{\gc}(P_{q-r}/Q)$$
as the JK residue of the component of degree $-r$ of $P/Q$.

\subsection {Spline functions and Kostant partition function}

Let us  recall  the formulae for  the spline function $Y_{\mathcal A^+}$
and for
$N_{\CA^+}(h)$.

\begin{definition}
Let $\gc$ be an  chamber contained in the cone $\CC(\CA^+)$.
Define the function ${\bf Y}^{\gc}$ on $V$ by

$${\bf Y}_{\CA^+}^{\gc}(h)= \JK_{\gc}\left(\frac{e^{ h}}
                       {\prod_{i=1}^N\alpha_i}\right).$$

\end{definition}

More explicitly, as $\JK_{\gc}$ vanishes outside the degree $-r$, we have

$${\bf Y}_{\CA^+}^{\gc}(h)
    =\frac{1}{(N-r)!} \JK_{\gc}\left(\frac{ h^{N-r}}
                       {\prod_{i=1}^N\alpha_i}\right).$$

We thus see  ${\bf Y}_{\CA^+}^{\gc}(h)$ is an homogeneous  polynomial on $V$.

The proof of the  following theorem is immediate (\cite{baldonivergne}).
\begin{theorem} \label{theo.main1}
Let $Y_{\CA^+}(h)$ be the multispline function associated to $\CA^+$.
    Let $\gc$ be a  chamber contained in the cone $\CC(\CA^+)$.

 We have for $h\in \gc$:

$$Y_{\CA^+}(h)={\bf Y}_{\CA^+}^{\gc}(h).$$

\end{theorem}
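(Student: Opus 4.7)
The plan is to identify both sides as inverse Laplace transforms of the same rational function and then reduce the equality on $\gc$ to a matter of which basic subsets contribute. First I would apply Fubini to the defining integral (\ref{spline}) of $Y_{\CA^+}$ to compute its Laplace transform:
$$\int_V Y_{\CA^+}(h)\, e^{-\langle h,u\rangle}\, dh \;=\; \prod_{i=1}^N \langle\alpha_i,u\rangle^{-1}$$
for $u$ in the interior of the dual cone of $\CC(\CA^+)$. Since $Y_{\CA^+}$ is supported on $\CC(\CA^+)$ and is piecewise polynomial (a fact that follows directly from (\ref{spline}) by Fubini in parametric form), its restriction to the interior chamber $\gc$ is a polynomial of degree $N-r$, uniquely determined by the Laplace transform together with the open chamber $\gc$ in which we evaluate.

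Next I would unfold the right-hand side. By the extension rule $\JK_\gc(P/Q)=\JK_\gc(P_{q-r}/Q)$, we have
$${\bf Y}_{\CA^+}^{\gc}(h) \;=\; \frac{1}{(N-r)!}\,\JK_\gc\!\left(\frac{\langle h,u\rangle^{N-r}}{\prod_{i=1}^N \alpha_i(u)}\right),$$
and the rational function in $u$ inside is homogeneous of degree $-r$. Applying the Brion--Vergne partial fraction algorithm (viewing $h$ as a parameter), decompose it as $\sum_{\sigma\in\bases(\CA^+)} c_\sigma(h)\, f_\sigma + (\text{degenerate fractions})$, where each $c_\sigma(h)$ is an explicit polynomial in $h$ of degree $N-r$. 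By the very definition (\ref{defi.jk}) of $\JK_\gc$, the degenerate fractions are annihilated, and only basic subsets $\sigma$ with $\gc\subset\CC(\sigma)$ survive, each contributing $c_\sigma(h)\,\vol(\sigma)^{-1}$.

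To match this with $Y_{\CA^+}(h)$ on $\gc$, I would invert the Laplace transform of $\prod_i \langle\alpha_i,u\rangle^{-1}$ using a contour adapted to $\gc$: for each basic $\sigma$, the inverse Laplace transform of $f_\sigma$ is $\vol(\sigma)^{-1}$ times the characteristic function of $\CC(\sigma)$, and the remaining factors $\prod_{i\notin\sigma}\langle\alpha_i,u\rangle^{-1}$ contribute by convolution, producing a polynomial supported on $\CC(\sigma)$ whose leading expression is exactly $c_\sigma(h)$. On the open cone $\gc$, only summands with $\gc\subset \CC(\sigma)$ remain, and degenerate fractions give distributions supported on proper subspaces and therefore vanish identically on the open region $\gc$. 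The two weighted sums then agree term by term.

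The main obstacle is the bookkeeping: checking carefully that the volume normalizations $\vol(\sigma)^{-1}$ inserted in (\ref{defi.jk}) match the ones produced by the inverse Laplace transform of the basic fractions, and verifying that degenerate fractions contribute zero both on the JK side (by definition) and on the spline side (by the support argument). Once these are aligned, the equality $Y_{\CA^+}(h)={\bf Y}_{\CA^+}^{\gc}(h)$ for $h\in\gc$ follows, consistent with the remark in the excerpt that the proof is immediate.
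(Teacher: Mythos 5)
Your proposal is correct and follows essentially the same route as the paper, which treats the statement as immediate by citing Baldoni--Vergne: the identity is obtained exactly as you describe, by comparing the Laplace transform $\prod_{i=1}^N\langle\alpha_i,u\rangle^{-1}$ of $Y_{\CA^+}$ with the basic-plus-degenerate fraction decomposition that $\JK_\gc$ is designed to evaluate, using that a chamber $\gc$ is either contained in or disjoint from each $\CC(\sigma)$ and that degenerate terms invert to distributions supported in $\CV_{sing}(\CA^+)$, hence vanish on $\gc$. The one step you defer to ``bookkeeping'' --- that the polynomial obtained on $\CC(\sigma)$ by inverting the $\sigma$-part of $\prod_i\langle\alpha_i,u\rangle^{-1}$ coincides with $\vol(\sigma)^{-1}$ times the coefficient of $f_\sigma$ in the degree $-r$ component of $e^{\langle h,u\rangle}/\prod_i\alpha_i$ --- is indeed a short multinomial extraction in the coordinates $h=\sum_{i\in\sigma}s_i\alpha_i$, so nothing essential is missing.
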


\begin{remark}

According to Theorem \ref{theo:volume}, this theorem gives the formula for the volume $V_{\CA^+}(h)$ of the partition  polytope $\Pi_{\CA^+}(h)$.

\end{remark}

Let us now give the residue formula for the number of integral points $N_{\CA^+}$
 of the partition polytope $\Pi_{\CA^+}(h)$.

Consider the torus
$T=U/U_\Z$  where $U$ is the dual vector space to $V$ and  $U_\Z\subset U$  is the dual lattice to $V_\Z$.
If $G\in U$, we denote by $g$ its image in $T$.

For $\sigma\in\bases(\CA^+)$ we
denote by $T(\sigma)$ the  subset of $T$ defined by

$$T(\sigma)=\left\{g\in T\,\Big|\,\,
  e^{\ll\alpha,2\pi\sqrt{-1}G\rr}=1\,\,
  \mbox{for all}\,\alpha\in\sigma,\  \ G\  \mbox  {a \ representative \ of  } g\in U/U_\Z\right \}.$$

  The set $T(\sigma)$ is a finite subset of $T$.

For $G\in U$ and $h\in V$, consider the {\bf Kostant function}
$K(G,h)$ on $U$ defined by
\begin{equation}\label{equa.Fgh}
K(G,h)(u)=\frac{e^{\ll h,2\pi\sqrt{-1}G+u\rr}}
     {\prod_{i=1}^N(1-e^{-\ll\alpha_i,2\pi\sqrt{-1}G+u\rr})}.
\end{equation}

\begin{remark}
If $h\in V_\Z$, the function   $K(G,h)$ depends only of the class $g$ of $G$ in $U/U_\Z$.
\end{remark}

\noindent The function $K(G,h)(u)$ is an element of ${\widehat\CR}_{\CA}$.

\noindent Indeed if we write $I(g)=\left\{i\,\Big|\,\,1\leq i\leq N,
  e^{-\ll\alpha_i,2\pi\sqrt{-1}G\rr}=1\right\},$
then
\begin{equation} \label{equa.fpsi}
K(G,h)(u)
  =e^{\ll h,2\pi\sqrt{-1}G\rr}\frac{e^{\ll h,u\rr} \psi^g(u)}
        {\prod_{i\in I(g)}\ll\alpha_i,u\rr}
        \end{equation}
where $\psi^g(u)$ is the holomorphic function of $u$ (in a
neighborhood of zero) defined by
$$\psi^g(u)=
  \prod_{i\in I(g)}
    \frac{\ll\alpha_i,u\rr}
         {(1-e^{-\ll\alpha_i,u\rr})}
  \times
  \prod_{i\notin I(g)}
    \frac{1}
         {(1-e^{-\ll\alpha_i,2\pi\sqrt{-1}G+u\rr})}.$$
By taking the Taylor series of $e^{\langle h,u\rangle }\psi^g(u)$ at $u=0$, we see that the function  $u\to K(G,h)(u)$ on $U$  defines an element of
 ${\widehat\CR}_{\CA}$.
If $\gc$ is a chamber of $\CC(\CA^+)$, the Jeffrey-Kirwan
residue $\JK_\gc(K(g,h))$ is  thus well defined.

\begin{definition}\label{def:NKc}
Let $\gc$ be a chamber.
Let $F$ be a finite subset of $U$. We define the function   ${\bf N}_{\CA^+}^{\gc,F}$ on $V$ by

  $${\bf N}_{\CA^+}^{\gc,F}(h)=\vol(V/V_\Z,dh)\sum_{G\in F}\JK_{\gc}(K(G,h))$$
  where  $\vol(V/V_\Z,dh)$ is the volume of the fundamental domain of $V_{\Z}$ for $dh$.

\end{definition}

Finally introduce the zonotope $Z(\CA^+)$ to be the convex polyhedra defined by
$$Z(\CA^+):=\{\sum_{i=1}^N t_i \alpha_i; 0\leq t_i\leq 1\}.$$
When $\CA^+$ is fixed,
we just write $Z=Z(\CA^+)$, and if $C$ is a set, we denote by   $C-Z$  the set of elements $\{\xi-z\}$ where $\xi\in C$ and $z\in Z$.

The following theorem is due to Szenes-Vergne~\cite{SzeVer}. It generalizes  \cite{DM}, \cite{KhoPuk} and \cite{BriVer97}.

\begin{theorem} \label{theo.main}
Let $\gc$ be a chamber.
Let $F$ be a finite subset of $U$. Assume that
  for any
  $\sigma\in\bases(\CA^+)$ such that  $\gc\subset \CC(\sigma)$, we have $T(\sigma)\subset F/U_\Z$.

Then for $h\in V_\Z\cap ({\gc-Z})$, we have
$$N_{\CA^+}(h)={\bf N}_{\CA^+}^{\gc,F}(h).$$
\end{theorem}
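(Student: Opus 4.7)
The plan is to realize $N_{\CA^+}(h)$ as a Fourier integral over the dual torus $T=U/U_\Z$ and then convert that integral, by a contour deformation, into a finite sum of Jeffrey--Kirwan residues indexed by the points of $F$.

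First I would start from the generating function identity
$$\prod_{i=1}^N \frac{1}{1-e^{-\ll \alpha_i, z\rr}} \;=\; \sum_{h\in V_\Z\cap \CC(\CA^+)} N_{\CA^+}(h)\, e^{-\ll h,z\rr},$$
valid for $z = G_0\in U$ with $\ll \alpha_i, G_0\rr>0$ for every $i$. Pairing with $e^{\ll h,z\rr}$ and integrating over the compact cycle $G_0+2\pi\sqrt{-1}\,U/U_\Z$ yields, up to the normalizing factor $\vol(V/V_\Z,dh)$, an integral representation of $N_{\CA^+}(h)$ as a Fourier coefficient.

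Next I would deform the cycle along a ray $G_0+t u_\gc$, $t\to +\infty$, where $u_\gc\in U$ is chosen so that $\ll h, u_\gc\rr < 0$ for every $h\in\gc$. The condition $h\in \gc-Z$ is precisely the convex shift needed so that every term produced by the geometric-series expansion of the integrand continues to decay as $t\to +\infty$; this kills the boundary contribution at infinity. During the deformation one crosses the periodic singular locus consisting of the points $z$ for which $e^{-\ll\alpha_i,z\rr}=1$ for every $i$ in some basic $\sigma$, whose torus images are exactly the sets $T(\sigma)$, $\sigma\in\bases(\CA^+)$. By the Brion--Szenes--Vergne identification of the iterated residue polarized by $\gc$ with the Jeffrey--Kirwan functional, the local contribution at a singular orbit representative $G$ equals $\vol(V/V_\Z,dh)\,\JK_\gc(K(G,h))$. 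By \eqref{defi.jk}, any basic $\sigma$ with $\gc\not\subset\CC(\sigma)$ contributes zero, so only torus points in $\bigcup_{\gc\subset\CC(\sigma)}T(\sigma)$ survive. By hypothesis these are all represented in $F/U_\Z$, and summing over $G\in F$ reproduces ${\bf N}_{\CA^+}^{\gc,F}(h)$.

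The main obstacle will be the contour deformation. Concretely, one must (i) justify that the tail at $t\to+\infty$ contributes nothing, which is where the zonotope shift $h\in \gc-Z$ is essential: $Z$ is exactly the ``thickness'' of the geometric-series expansion that must be dominated by the exponential factor $e^{t\ll h, u_\gc\rr}$, and $\gc-Z$ is the sharpest convex set on which this domination is uniform; and (ii) identify each local residue with $\JK_\gc(K(G,h))$, which requires expanding the holomorphic factor $\psi^g(u)$ of \eqref{equa.fpsi} in Taylor series, decomposing the resulting expression into basic fractions $1/\prod_{i\in\sigma}\ll\alpha_i,u\rr$, and then applying the polarization that defines the Jeffrey--Kirwan functional on $\gc$. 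Once these two technical ingredients are in place, the combinatorial bookkeeping linking $F$ with the singular sets $T(\sigma)$ is straightforward.
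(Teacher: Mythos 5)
The paper itself does not prove this statement: it quotes it from Szenes--Vergne \cite{SzeVer} (generalizing \cite{BriVer97}, \cite{DM}, \cite{KhoPuk}), so the only fair comparison is with the strategy of that cited proof, and your outline is indeed the standard heuristic behind it: write $N_{\CA^+}(h)$ as a Fourier coefficient of the generating series over the torus $U/U_\Z$ and localize at the finite sets $T(\sigma)$. However, as written your argument has a genuine gap at exactly the decisive step. In dimension $r>1$ the singular locus of $K(\cdot,h)$ is a (periodically translated) union of \emph{hypersurfaces} $e^{-\ll\alpha_i,z\rr}=1$, not a discrete set, and pushing an $r$-dimensional cycle along a ray $G_0+tu_\gc$ does not by itself decompose the crossing into well-defined ``local contributions'' at the points of $T(\sigma)$; attributing to each such point the value $\vol(V/V_\Z,dh)\,\JK_\gc(K(G,h))$ is essentially the content of the theorem, not a lemma you can invoke. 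The actual proofs avoid this: they first perform an algebraic partial-fraction (Orlik--Solomon type) decomposition of the integrand, or equivalently work with iterated \emph{one-variable} contour shifts along an ordered (diagonal) basis adapted to $\gc$, so that each pole crossing is classical one-dimensional residue calculus, and only then does the polarization defining $\JK_\gc$ (formula \eqref{defi.jk}) enter.

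The second weak point is the role of the zonotope. Your claim that $h\in\gc-Z$ is ``exactly the thickness'' making the tail estimate uniform is asserted, not established, and it is not how the extended validity domain is obtained: agreement of $N_{\CA^+}$ with the chamber quasipolynomial on the closure of $\gc$ is the output of the residue computation, and the extension to $\gc-Z$ comes from a separate argument, e.g.\ re-expanding a subset $S$ of the factors via $1/(1-e^{-a})=-e^{a}/(e^{a}-1)$ (which translates the region of agreement by $\sum_{i\in S}\alpha_i\in Z$), or from the difference equations $\nabla_{\alpha}N_{\CA^+}=N_{\CA^+\setminus\{\alpha\}}$ combined with induction on $\CA^+$. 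Also note that for $h$ in $\gc-Z$ but outside $\CC(\CA^+)$ (or on its boundary) the factor $e^{\ll h,z\rr}$ need not decay along your chosen ray, so the ``boundary term at infinity vanishes'' step is not uniform in the generality you need. So the skeleton is the right one, but the two ingredients you defer --- point-localization of a multidimensional torus integral and the zonotope shift --- are precisely where the proof lives, and the single-deformation argument as stated would not go through without replacing it by the partial-fraction/iterated-residue mechanism of \cite{BriVer97} and \cite{SzeVer}.
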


We choose for any chamber $\gc$ such a finite set $F$ such that all elements $g\in F/U_\Z$ have finite order
and such that $F$  satisfies the condition:

{(\bf C}) \ \label{conditionC}
 \ \  for any
  $\sigma\in\bases(\CA^+)$ such that  $\gc\subset \CC(\sigma)$, we have $T(\sigma)\subset F/U_\Z.$

It is possible to achieve this, for example choosing  a set $F$   of representatives of $\frac{1}{p}U_\Z$ modulo $U_\Z$, where $p$ is that $pU_\Z$ is contained in $\sum_{i\in \sigma} \Z \alpha_i$ for any basis $\sigma$.

We now simply denote ${\bf N}^{\c,F}$ by ${\bf N}^{\c}$, leaving implicit  the choice of the finite set $F$.
\begin{remark}

$\bullet$
Observe that  ${\bf N}_{\CA^+}^{\gc}(h)$  does
not depend on the measure $dh$, as it should be.

$\bullet$      If  $\gc$ is the exterior chamber, then ${\bf N}_{\CA^+}^{\gc}(h)=0$.
In our algorithm, we are not knowing in advance if the point $h$ belongs to the cone $\CC(\CA^+)$ or not, so that this remark is not as stupid as it looks.

$\bullet$ Observe also that if $\gc$ is an interior chamber, then $\gc-Z$ contains the closure $\overline \gc$ of $\gc$, while if $\gc$ is the exterior chamber $\gc-Z=\gc$.
For an interior chamber, usually the set  $\gc-Z$  intersected with the lattice $V_\Z$ is
strictly larger than $\overline \gc$ intersected with $V_\Z$.
This fact will be important for computing shifted partition functions, as we will explain later.

\end{remark}

Let us explain the behavior of the partition function $N_{\CA^+}$  on the domain  $\gc-Z$.

We first explain the case of an unimodular system.

\begin{definition}The system
  $\CA^+$  is unimodular  if each $\sigma \in
\bases(\CA^+)$ is a $\Z$-basis of $V_\Z$.
\end{definition}

\begin{example}
It is easy to see that $A_r^+$ is unimodular, so is any subsystem.
\end{example}

Thus if $\CA^+$ is unimodular, the set  $F=\{0\}$  satisfies the condition ({\bf C})   and we choose this set $F$.

\begin{proposition}

If $\CA^+$ is unimodular,
the function ${\bf N}^{\gc}_{\CA^+}(h)$ is a polynomial function on $V$.
\end{proposition}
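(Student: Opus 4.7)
The plan is to show two things: first, that in the unimodular case the set $F=\{0\}$ actually satisfies condition (\textbf{C}), so that the displayed definition of ${\bf N}^{\gc}_{\CA^+}(h)$ reduces to a single term; second, that this single term $\JK_{\gc}(K(0,h))$ is a polynomial function of $h$.

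For the first step, I would start from the definition
$$T(\sigma)=\bigl\{g\in T\,\big|\,e^{\ll\alpha,2\pi\sqrt{-1}G\rr}=1 \text{ for all }\alpha\in\sigma\bigr\}.$$
Fix a basic subset $\sigma\in\bases(\CA^+)$. By the unimodularity hypothesis, the family $\{\alpha_i\}_{i\in\sigma}$ is a $\Z$-basis of $V_\Z$. Its dual basis in $U$ is therefore a $\Z$-basis of $U_\Z$. Consequently, the condition $\ll\alpha_i,G\rr\in\Z$ for every $i\in\sigma$ forces $G\in U_\Z$, i.e. $g=0$ in $T=U/U_\Z$. Thus $T(\sigma)=\{0\}$ for every $\sigma\in\bases(\CA^+)$, and condition (\textbf{C}) is trivially satisfied by $F=\{0\}$. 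Hence
$${\bf N}^{\gc}_{\CA^+}(h)=\vol(V/V_\Z,dh)\,\JK_{\gc}(K(0,h)).$$

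For the second step I would analyze $K(0,h)(u)$ explicitly. Since $G=0$, we have $I(0)=\{1,\ldots,N\}$, so formula (\ref{equa.fpsi}) gives
$$K(0,h)(u)=\frac{e^{\ll h,u\rr}\,\psi^0(u)}{\prod_{i=1}^{N}\ll\alpha_i,u\rr},$$
where $\psi^0(u)=\prod_{i=1}^{N}\dfrac{\ll\alpha_i,u\rr}{1-e^{-\ll\alpha_i,u\rr}}$ is holomorphic at $u=0$. The denominator is homogeneous of degree $N$ in $u$, while $\JK_{\gc}$ kills everything of homogeneity different from $-r$. Therefore only the Taylor coefficient of order $N-r$ of the numerator $e^{\ll h,u\rr}\psi^0(u)$ contributes. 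That coefficient is a polynomial in $h$, obtained by expanding $e^{\ll h,u\rr}$ in powers of $\ll h,\cdot\rr$ and pairing with the Taylor coefficients of $\psi^0$. Applying $\JK_{\gc}$ to a polynomial of degree $N-r$ in $u$ divided by a fixed product of $N$ linear forms produces a scalar depending polynomially on $h$.

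The conclusion is that $h\mapsto\JK_{\gc}(K(0,h))$ is a polynomial function of degree at most $N-r$ on $V$, and so is ${\bf N}^{\gc}_{\CA^+}(h)$. There is no genuine obstacle here: the real content is verifying condition (\textbf{C}) via the duality of bases under unimodularity; once $F=\{0\}$ is legal, the polynomiality is visible from the explicit shape of $K(0,h)(u)$ and the homogeneity selection built into $\JK_{\gc}$.
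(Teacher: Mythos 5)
Your proof is correct and follows essentially the same route as the paper: factor $K(0,h)(u)$ as $e^{\ll h,u\rr}/\prod_i\ll\alpha_i,u\rr$ times the holomorphic function $\prod_i\ll\alpha_i,u\rr/(1-e^{-\ll\alpha_i,u\rr})$, and use the fact that $\JK_{\gc}$ selects only the degree $-r$ component, so each contributing term is a polynomial in $h$ of degree at most $N-r$. Your explicit verification that unimodularity forces $T(\sigma)=\{0\}$, hence that $F=\{0\}$ satisfies condition (\textbf{C}), is a welcome addition that the paper states just before the proposition without proof, but it does not change the nature of the argument.
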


\begin{proof}
We have just to consider  $K(G,h)=K(0,h)$ and
we can write
$$K(0,h)(u)=\frac{e^{\ll h,u\rr}}
       {\prod_{i=1}^N(1-e^{-\ll\alpha_i,u\rr})}
= \frac{e^{\ll h,u\rr}}
       {\prod_{i=1}^N\ll\alpha_i,u\rr}
  \times
  \frac{\prod_{i=1}^N\ll\alpha_i,u\rr}
       {\prod_{i=1}^N(1-e^{-\ll\alpha_i,u\rr})}$$
where $\frac{\prod_{i=1}^N\ll\alpha_i,u\rr}
       {\prod_{i=1}^N(1-e^{-\ll\alpha_i,u\rr})}
= \sum_{k=0}^{+\infty}\psi_k(u)$
is a holomorphic function of $u$ in a neighborhood of $0$ with
$\psi_0(u)=1$.

It follows that
${\bf N}^{\gc}_{\CA^+}(h)$  is given  by the following polynomial function   of $h$

   \begin{eqnarray}{\bf N}_{\CA^+}^{\gc}(h) &=&\vol \left( V/V_\Z, dh \right) \JK_{\gc}\left(
           \frac{e^{\ll h,u\rr}}
                {\prod_{i=1}^N\ll\alpha_i,u\rr}
           \times
           \sum_{k=0}^{+\infty}\psi_k(u)
           \right) \nonumber \\
&=&\vol \left( V/V_\Z, dh \right) \sum_{k=0}^{N-r}
   \frac{1}{(N-r-k)!}
   \JK_{\gc}\left(
           \frac{\ll h,u\rr^{N-r-k}\psi_k(u)}
                {\prod_{i=1}^N\ll\alpha_i,u\rr}
           \right). \label{na+formula3}
\end{eqnarray}

Note that the function ${\bf N}_{\CA^+}^{\gc}$ is a polynomial function of degree $N-r$ whose homogeneous
component of degree $N-r$ is the  function ${\bf Y}^{\gc}_{\CA^+}(h),$ that is the volume of the polytope.

\end{proof}

\bigskip

Let us now consider the general case where $F$ is no longer reduced to $\{0\}$. For example for  parabolic  root
systems  of $B_r$, $C_r$, $D_r$, the set $F$  satisfying the condition ({\bf C})  cannot  longer be taken as equal to
$\{0\}$.

We recall that an exponential polynomial function is a linear combination of exponential functions multiplied by polynomials.
\begin{proposition}
The function
 ${\bf N}^{\gc}_{\CA^+}(h)$  is an exponential polynomial function on $V$
   and the restriction of    ${\bf N}^{\gc}_{\CA^+}(h)$ to $V_\Z$
is a quasipolynomial function on $V_\Z$.

\end{proposition}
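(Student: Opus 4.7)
The plan is to reduce the claim to the structural formula \eqref{equa.fpsi} for $K(G,h)(u)$ and to unpack what $\JK_\gc$ does to such an expression term by term.

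First, I would fix $G\in F$ and look at
$$K(G,h)(u)= e^{\langle h,2\pi\sqrt{-1}G\rangle}\,\frac{e^{\langle h,u\rangle}\,\psi^g(u)}{\prod_{i\in I(g)}\langle\alpha_i,u\rangle}.$$
The prefactor $e^{\langle h,2\pi\sqrt{-1}G\rangle}$ is a constant with respect to $u$ and thus factors out of $\JK_\gc$. What remains inside $\JK_\gc$ has a denominator which is a product of linear forms on $U$, and a numerator $e^{\langle h,u\rangle}\psi^g(u)$ which is a holomorphic function of $u$ in a neighborhood of $0$ depending polynomially on $h$ (via the Taylor expansion of the exponential). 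Since $\JK_\gc$ annihilates every homogeneous component of degree different from $-r$, only finitely many terms of the Taylor expansion contribute, and the result is a polynomial in $h$, which I will call $P_G(h)$.

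Putting this together and summing over $G\in F$ yields
$${\bf N}_{\CA^+}^{\gc}(h)=\vol(V/V_\Z,dh)\sum_{G\in F} e^{\langle h,2\pi\sqrt{-1}G\rangle}\,P_G(h),$$
which by definition is an exponential polynomial on $V$. The argument for $P_G$ is entirely parallel to the unimodular computation \eqref{na+formula3} already carried out in the previous proposition; the only new ingredient is the presence of the non-trivial exponential prefactor.

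For the second assertion, I would use the hypothesis built into the definition of $F$: its elements project to classes of finite order in $T=U/U_\Z$. Choose a common denominator, so that there is an integer $p\geq 1$ with $pG\in U_\Z$ for every $G\in F$. Then for $h\in V_\Z$ the pairing $\langle h,2\pi\sqrt{-1}G\rangle\in 2\pi\sqrt{-1}\,\tfrac{1}{p}\Z$, so $h\mapsto e^{\langle h,2\pi\sqrt{-1}G\rangle}$ is a character of $V_\Z$ that is constant on each coset of the sublattice $L'=pV_\Z$ (or more sharply, of the sublattice where $\langle h,2\pi\sqrt{-1}G\rangle\in 2\pi\sqrt{-1}\Z$ for every $G\in F$, which is of finite index in $V_\Z$). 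On each such coset the exponential prefactors become constants, so the whole sum reduces to a polynomial function of $h$, which is the definition of a quasipolynomial on $V_\Z$.

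The only technical care needed is verifying that the Jeffrey--Kirwan residue does respect the formal manipulations above: that it commutes with the scalar prefactor $e^{\langle h,2\pi\sqrt{-1}G\rangle}$ and that one may feed in $h$-dependent numerators and extract the resulting $h$-dependence as a polynomial. Both are immediate from the definition of $\JK_\gc$ on $\widehat{\CR}_\CA$ as the pairing with the degree $-r$ component, which is $\C$-linear. No step is really an obstacle; the main point is simply to identify the exponential/polynomial split that is already built into \eqref{equa.fpsi}.
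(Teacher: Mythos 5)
Your argument is correct and is essentially the paper's own proof: you factor the constant prefactor $e^{\langle h,2\pi\sqrt{-1}G\rangle}$ out of $\JK_\gc$, expand $e^{\langle h,u\rangle}\psi^g(u)$ and use that only the degree $-r$ component survives (so only finitely many Taylor terms, each contributing a polynomial in $h$), then invoke the finite order of the classes $g\in F/U_\Z$ to get constancy of the exponentials on cosets of $pV_\Z$, hence quasipolynomiality on $V_\Z$. This matches the paper's proof step for step, with your version merely writing the degree bookkeeping a bit less explicitly.
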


\begin{proof}
           Let us denote by
$\psi^g(u)=\sum_{k=0}^{+\infty}\psi_k^g(u)$ the series development
of the holomorphic function $\psi^g$ appearing in
formula~\eqref{equa.fpsi}. Then we see that $\JK_{\gc}(K(G,h))$
equals
\begin{eqnarray}
& &
       \left(e^{\ll h,2\pi\sqrt{-1}G\rr} JK_{\gc}\frac{e^{\ll h,u\rr}}
                  {\prod_{i\in I(g)}\ll\alpha_i,u\rr}
             \psi^g(u)
       \right) \label{na+formula1} \\
&=&e^{\ll h,2\pi\sqrt{-1}G\rr}\sum_{k=0}^{|I(g)|-r}
      \frac{1}{(|I(g)|-r-k)!}
      \JK_{\gc}\left(\frac{\ll h,u\rr^{|I(g)|-r-k}}
                         {\prod_{i\in I(g)}\ll\alpha_i,u\rr}
                    \psi_k^g(u)
              \right). \nonumber
\end{eqnarray}

The function $$h\mapsto
\JK_{\gc}\left(\frac{\ll h,u\rr^{|I(g)|-r-k}}
                         {\prod_{i\in I(g)}\ll\alpha_i,u\rr}
                    \psi_k^g(u)
              \right)$$
 is a polynomial function of $h$ of degree $|I(g)|-r-k$.
 Thus  we see that  $\JK_{\gc}(K(G,h))$
is the product of the exponential function  $e^{\ll h,2\pi\sqrt{-1}G\rr}$ by a polynomial function of $h$.

Furthermore, if $g$ is of order $p$ and $h$ varies in $V_\Z$, the function $h\mapsto e^{\ll
h,2\pi\sqrt{-1}G\rr}$ is constant on each coset $h+pV_\Z$
 of the
lattice $pV_\Z$.

\end{proof}

Return to the computation of the partition function $N_{\CA^+}(h)$.
Thus we see that when
$h$ varies in $(\gc-Z)\cap V_\Z$, we have  that $N_{\CA^+}(h)$ coincide with the quasi polynomial function  ${\bf N}_{\CA^+}^{\gc}(h)$ above. Note that its highest degree component is polynomial and is again the  function ${\bf Y}^{\c}_{\CA^+}(h)$, the volume of the polytope $\Pi_{\CA^+}(h)$.

The quasipolynomial nature  of the integral-point counting
functions $N_\CA^+$ stems precisely from the root of unity in formula
\eqref{na+formula1}.

Furthermore   for  parabolic root
systems of type $B$, $C$, and $D$, these roots of unity are of
order 2, as in the following example. Thus we summarize the properties of our partition functions in the following remark:
\begin{remark}
\begin{itemize}
\item $\CA_r^+$ is unimodular, that is we can choose $F=0$ in Theorem \ref{theo.main}, and thus the partition function $N_{\Phi}$  for any subset $\Phi$ of $\CA^+_r$  coincide with a polynomial function on each domain $\gc-Z(\Phi).$
\item The integral-point counting functions $N_{ \Phi}$  for any subsystem of $B_r,C_r,D_r$ coincide with  quasipolynomials with period $2$ on each domain   $\gc-Z(\Phi).$
\end{itemize}

\end{remark}

We now compute the number of integral points in two different situations: a non unimodular case and a  unimodular one.
We treat the non unimodular case first.

\begin{example} Here $V$ is a vector space with real
coordinates and basis $e_1,e_2$ and $U=V^*$ has dual basis $e^1,e^2$. We write $v=\sum_{i=1}^2 v_i e_i\in V$  and $u=\sum_{i=1}^2h_ie^i\in U$  for elements in $V$ and $U$ respectively.
Let us compute the  number of integral points
for the positive non compact root system occuring for the holomorphic discrete series of $SO(5,\C):$  that is we fix
$\Delta^+:=\{e_1,e_2,e_1+e_2,e_1-e_2\}$
and $\CA^+=\Delta_n^+:=\{e_1,e_1+e_2,e_1-e_2\}.$ We also write  a vector $h=h_1e_1+h_2e_2$  in the cone $\CC(\CA^+)$ as $(h_1,h_2)$.
Of course, the calculation can be done by hand, but we illustrate the method in this very simple example.

Observe that the root lattice is $\Z e_1 \oplus \Z e_2 $ and
$\vol \left( V/V_\Z, dh \right) = 1$ for the measure $dh=dh_1 dh_2$.

There are  two chambers, namely
$\gc_1=\CC(\{e_1+e_2,e_1\})$ and
$\gc_2=\CC(\{e_1,e_1-e_2\})$.

 Now let us
compute the Jeffrey-Kirwan residues on the chambers.

 We have for example:

$$\begin{array}{rclrcl}
\JK_{\gc_1}\left(\frac{1}{u_1(u_1+u_2)}\right)&=&1,&
\JK_{\gc_2}\left(\frac{1}{u_1(u_1+u_2)}\right)&=&0,\\
\JK_{\gc_1}\left(\frac{1}{(u_1+u_2)(u_1-u_2)}\right)&=&\frac12,&
\JK_{\gc_2}\left(\frac{1}{(u_1+u_2)(u_1-u_2)}\right)&=&\frac12,\\
\JK_{\gc_1}\left(\frac{1}{u_1(u_1-u_2)}\right)&=&0,&
\JK_{\gc_2}\left(\frac{1}{u_1(u_1-u_2)}\right)&=&1\\
\end{array}$$

  For the number of integral points, we first note that
$F=\{(0,0),(1/2,1/2)\}$.
Consequently $N_{\Delta_n^+}(h)$ is equal to the Jeffrey-Kirwan residue
of $f_1=K((1,1),h)$ plus $f_2=K((1/2,1/2),h)$.
We rewrite the series $f_j$ ($j=1$, $2$) as
$f_j=f'_j\times e^{u_1h_1+u_2h_2}/u_1(u_1+u_2)(u_1-u_2)$
where
{\scriptsize
\begin{eqnarray*}
f'_1&=&\frac{u_1}{1-e^{-u_1}}
       \times
       \frac{u_1+u_2}{1-e^{-(u_1+u_2)}}
       \times
       \frac{u_1-u_2}{1-e^{-(u_1-u_2)}},\\
f'_2&=&\frac{u_1}{1+e^{-u_1}}
       \frac{u_1+u_2}{1-e^{-(u_1+u_2)}}
       \times
       \frac{u_1-u_2}{1-e^{-(u_1-u_2)}}\times(-1)^{h_1+h_2}.
\end{eqnarray*} }
Using the series expansions
$\frac{x}{1-e^{-x}}=1+\frac12x+\frac{1}{12}x^2+O(x^3)$ and
$\frac{x}{1+e^{-x}}=\frac12x+O(x^2)$, we obtain that the number of
integral points is the JK residue of
{\scriptsize
\begin{eqnarray*}
&&\frac { h_1+\frac12}{(u_1-u_2)(u_1+u_2)}+\frac {\frac12}{u_1(u_1+u_2)}+\frac {\frac12}{u_1(u_1-u_2)}
+\frac{h_2u_2}{u_1(u_1-u_2)(u_1+u_2)}
  +\frac{\frac12(-1)^{h_1+h_2}}{(u_1+u_2)(u_1-u_2)}\\
&&=\frac{h_1+\frac12+\frac12(-1)^{h_1+h_2}}{(u_1-u_2)(u_1+u_2)}+\frac {\frac12}{u_1(u_1+u_2)}+\frac {\frac12}{u_1(u_1-u_2)}
-\frac{ h_2}{u_1(u_1+u_2)}+\frac{ h_2}{(u_1+u_2)(u_1-u_2)}\\
\end{eqnarray*}}
%\begin{eqnarray*}
%&&=\frac{(1+\frac32h_1+\frac12h_1^2)}{u_2(u_1+u_2)}
%  +\frac{\frac74+2(h_1+h_2)+\frac12h_2^2+h_1h_2+\frac12h_1^2}{(u_1-u_2)(u_1+u_2)}
%  -\frac{(\frac12h_2^2+\frac12h_2)}{u_1(u_1+u_2)}\\
%&+(-1)^{h_1+h_2}\frac{\frac12}{(u_1)(u_1+u_2)}.\\
%\end{eqnarray*}}
We then obtain:
{\scriptsize
\begin{eqnarray*}
N_{\Delta_n^+}(h)&=& \frac12 h_1+\frac14 (-1)^{h_1+h_2}+\frac34-\frac12h_2,   \quad\mbox{ if }h\in\gc_1,\\
N_{\Delta_n^+}(h)&=&\frac12h_1+\frac14(-1)^{h_1+h_2}+\frac34+\frac12 h_2,    \quad\mbox{ if }h\in\gc_2,\\
\end{eqnarray*}}
Note that the functions $N_{\Delta^+_n}$ agree on walls, that is $h_2=0$,  and the
formulae above are valid on the closures of the chambers.
\end{example}

The second example treats the  unimodular case of $A_r^+$, see Example \ref{Ar}.
Since we have identified $V$ with $\R^r$, then we have a canonical identification of
$U=V^*$ with $\R^r$ defined by duality: $u\in\R^{r}$ to
$u=\sum_{i=1}^{r}u_i e^i\in E^*$, where $e^i$ is the dual
basis to $e_i$.
Thus the root $e_i-e_j$ ($1\leq i<j\leq r$) produces the linear
function $u_i-u_j$ on $U$, while the root $e_i-e_{r+1}$ produces
the linear function $u_i$.
Recall also the identification  $h=\sum_{i=1}^{r+1}h_i e_i=[h_1,\ldots,h_r]$,

We  compute the number of integral points for  the parabolic subsystems of  $U(2,2)$  illustrated in Fig.\ref{figurea3nonc}.

\begin{example}  We consider the 3 different systems of non compact roots as described in Fig.\ref{figurea3nonc}
and  give the formulae for the partition function.
\begin{enumerate} \item If   $\Delta_n^+=\Delta^+([1,4],[2,3])$  then
{\scriptsize
\[
N_{\Delta_n^+}(h)=
\left\{
\begin{array}{l@{\quad\mbox{if}\quad }l}
h_1+h_2+1&h\in\gc_1,\\
 h_1+ h_2+ h_3+1&h\in\gc_2,\\
 h_1+h_3+1&h\in\gc_3,\\
 h_1+1&h\in\gc_4\\
 \end{array}
\right.
\]}
\item If   $\Delta_n^+=\Delta^+([1,2],[3,4])$  then
{\scriptsize
\[
N_{\Delta_n^+}(h)=
\left\{
\begin{array}{l@{\quad\mbox{if}\quad }l}
1+ h_2&h\in\gc_1,\\
 1+h_1+ h_2+ h_3&h\in\gc_2,\\
 1+h_1&h\in\gc_3,\\
1-h_3&h\in\gc_4\\
\end{array}
\right.
\]}
\item If   $\Delta_n^+=\Delta^+([1,3],[2,4])$  then
{\scriptsize
\[
N_{\Delta_n^+}(h)=
\left\{
\begin{array}{l@{\quad\mbox{if}\quad }l}
1+h_1+h_2& h\in\gc_1\\
1+h_1 +h_2+h_3 &   h\in\gc_2,\\
 1+h_1& h\in\gc_3,\\

\end{array}
 \right.
 \]}
\end{enumerate}
We have to compute the Jeffrey-Kirwan residue of the function

$f=f_1 \times \frac{1}{\prod_{\alpha \in \Delta^+_n} \alpha}$ where $f_1(h)(u)=\prod_{\alpha \in \Delta^+_n} \frac{\la\alpha,u\ra}{1-e^{-\la\alpha,u\ra}}\times  e^{u_1h_1+u_2h_2+u_3h_3}.$
The computation is immediate since we need only term of degree one for the expansion of   $f_1$.  We omit the details. Remark though that once again the formulae agree on walls as it should be.
\end{example}

\subsection{Shifted partition functions}

Let us consider as before  our lattice $V_\Z$ and our sequence  $\CA^+$  of elements of $V_\Z$.
Let $$\rho_n=\frac{1}{2}\sum_{\alpha\in \CA^+} \alpha.$$

We introduce $$P_n=\rho_n+V_\Z.$$ Thus for any $\mu\in P_n$, the function $N_{\CA^+}(\mu-\rho_n)$ is well defined.

Let $\CH$ be the complement of all admissible hyperplanes, that is hyperplanes generated by elements of $\CA^+$, Def.\ref{admissible}.

\begin{definition}

A tope is a connected component of the open subset $V-\CH$ of $V$.
\end{definition}

We choose once for all a finite set $F$ of elements $G$ of $U$, so that the image of elements $g$  cover all groups $T(\sigma)$.

If $\tau$ is a tope, then $\tau$ is contained in a unique chamber $\gc$, and we  denote by ${\bf N}^\tau_{\CA^+}$
 the  exponential polynomial function  ${\bf N}^{\gc,F}_{\CA^+}$ given in Definition \ref{def:NKc}.
 If $\tau$ is not contained in $\CC(\CA^+)$, then ${\bf N}^{\tau}_{\CA^+}=0$.

The closures of the topes $\tau$ form a cover  of $V$.
A consequence of Theorem \ref{theo.main}, is the following.
\begin{theorem}\label{theoremtope}
For any tope $\tau$ such that $\mu\in \overline{\tau}\cap P_n$,
we have
$$N_{\CA^+}(\mu-\rho_n)={\bf N}_{\CA^+}^{\tau}(\mu-\rho_n).$$
\end{theorem}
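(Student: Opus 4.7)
The plan is to reduce this statement directly to Theorem~\ref{theo.main} for the chamber containing the tope. The first step is to observe that every tope $\tau$ is contained in a unique chamber $\gc(\tau)$ of $\CC_{\mathrm{reg}}(\CA^+)$. Indeed, every codimension-one face of a cone $\CC(S)$ with $S\subset\CA^+$ is spanned by $r-1$ linearly independent vectors of $S$, and thus lies in some $\CA^+$-admissible hyperplane. Hence $\CV_{sing}(\CA^+)$ is contained in the union of admissible hyperplanes, so the complement of the admissible hyperplanes refines $\CC_{\mathrm{reg}}(\CA^+)$, and each connected tope lies inside a unique connected component of $\CC_{\mathrm{reg}}(\CA^+)$. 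With this identification, the function ${\bf N}^{\tau}_{\CA^+}$ is by definition equal to ${\bf N}^{\gc(\tau),F}_{\CA^+}$ from Definition~\ref{def:NKc}.

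Next, set $h:=\mu-\rho_n$. Since $\mu\in P_n=\rho_n+V_\Z$, we have $h\in V_\Z$. Writing $\gc:=\gc(\tau)$, Theorem~\ref{theo.main} will give
$$N_{\CA^+}(h)={\bf N}^{\gc,F}_{\CA^+}(h)={\bf N}^{\tau}_{\CA^+}(h)$$
as soon as we verify the geometric condition $h\in\gc-Z$.

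The crux is the observation that $\rho_n=\frac{1}{2}\sum_{\alpha\in\CA^+}\alpha$ is obtained from the parameter vector $(\tfrac12,\ldots,\tfrac12)$, which lies in the interior of the cube $[0,1]^N$; since $\CA^+$ spans $V$, its image $\rho_n$ is a topological interior point of the zonotope $Z=Z(\CA^+)$. Consequently $Z-\rho_n$ (which equals $\rho_n-Z$ by the central symmetry of the zonotope about its center) is a centrally symmetric neighborhood of the origin, containing an open ball $B_\varepsilon(0)$. Now $\mu\in\overline\tau\subset\overline\gc$ and $\gc$ is open, so the open ball $B_\varepsilon(\mu)$ meets $\gc$. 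Pick a vector $w\in B_\varepsilon(0)\subset Z-\rho_n$ with $\mu+w\in\gc$, and write $w=z-\rho_n$ for some $z\in Z$. Then $\mu-\rho_n+z=\mu+w\in\gc$, which shows
$$h=\mu-\rho_n=(\mu-\rho_n+z)-z\in\gc-Z.$$
Applying Theorem~\ref{theo.main} yields the result.

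The only substantive step is the short geometric inclusion $\overline\gc\subset\gc+(\rho_n-Z)$, and it rests entirely on $\rho_n$ lying in the interior of $Z$—the reason the theorem is naturally phrased on the shifted lattice $P_n=\rho_n+V_\Z$. The case where $\gc(\tau)$ is the exterior chamber is compatible: there ${\bf N}^{\tau}_{\CA^+}=0$, and one checks that $h\in\gc_{\mathrm{ext}}-Z$ forces $h\notin\CC(\CA^+)$ (if $h=g-z$ with $g\notin\CC(\CA^+)$ and $z\in Z\subset\CC(\CA^+)$, then $h\in\CC(\CA^+)$ would give $g=h+z\in\CC(\CA^+)$, a contradiction), so $N_{\CA^+}(h)=0$ as required.
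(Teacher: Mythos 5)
Your proof is correct and follows essentially the same route as the paper, which simply derives Theorem \ref{theoremtope} from Theorem \ref{theo.main} using the remark that $\gc-Z$ contains $\overline\gc$ for an interior chamber. Your verification that $\mu-\rho_n\in\gc-Z$ via $\rho_n\in\operatorname{int}Z(\CA^+)$ (together with the exterior-chamber check) just makes explicit the geometric step the paper leaves implicit.
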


\subsection{A formula for the Jeffrey-Kirwan residue } \label{sect.JK}
Having stated a formula for partition functions (or shifted partition functions) in terms of $\JK_{\gc}$, we will explicit it
 using the notion
of maximal proper nested sets, as developed in
\cite{DCP}, and the notion of iterated residues.
 The  algorithmic implementation of this formula is working in
a quite impressive way, at least for low dimension.

  This general scheme   will be then  be applied  to  Blattners' formula.

\subsubsection{Iterated residue}
If $f$ is a meromorphic function of one variable $z$ with a pole
of order less than or equal to $k$ at $z=0$, then we can write
$f(z)=Q(z)/z^k$, where $Q(z)$ is a holomorphic function near $z=0$.
If the Taylor series of $Q$ is given by
$Q(z)=\sum_{s=0}^{\infty}q_s z^s$, then as usual the residue at
$z=0$ of the function $f(z)=\sum_{s=0}^{\infty}q_s z^{s-k}$ is
the coefficient of $1/z$, that is, $q_{k-1}$.
We will denote it by $\res_{z=0}f(z)$.
To compute this residue we can either expand $Q$ into a power series
and search for the coefficient of $z^{-1}$, or employ the formula
\begin{equation} \label{equa.resf}
\res_{z=0}f(z)
=\frac{1}{(k-1)!}(\partial_z)^{k-1}\left(z^kf(z)\right)\Big|_{z=0}.
\end{equation}

We now introduce the notion of iterated residue on the space
 ${\CR}_{\CA^+}$.

Let $\vec{\nu}=[\alpha_1,\alpha_2,\ldots,\alpha_r]$ be an ordered
basis of $V$ consisting of elements of $\CA^+$  (here we have
implicitly renumbered the elements of $\CA^+$  in order that the
elements of our basis are listed first). We choose a system of
coordinates on $U$ such that $\alpha_i(u)=u_i$. A function
$\phi\in \CR_{\CA^+}$ is thus written as a  rational fraction
$\phi(u_1,u_2,\ldots,u_r)
  =\frac{P(u_1,u_2,\ldots,u_r)}{Q(u_1,u_2,\ldots,u_r)}$
where the denominator $Q$ is a product of linear forms.

\begin{definition} \label{defi.ires}
If $\phi\in\CR_{\CA}$, the iterated residue
$\Ires_{\vec{\nu}}(\phi)$ of $\phi$ for $\vec\nu$ is the scalar
$$\Ires_{\vec{\nu}}(\phi)
  =\res_{u_r=0}\res_{u_{r-1}=0}
   \cdots\res_{u_1=0}\phi(u_1,u_2,\ldots,u_r)$$
where each residue is taken assuming that the variables
with higher indices are considered constants.
\end{definition}

Keep in mind that at each step the residue operation augments the
homogeneous degree of a rational function  by $+1$ (as for example
$\res_{x=0}(1/xy)=1/y$) so that the iterated residue vanishes on
homogeneous elements $\phi\in \CR_{\CA}$, if the homogeneous
degree of $\phi$ is different from $-r$.

Observe that the value of $\Ires_{\vec{\nu}}(\phi)$ depends on the
order of ${\vec{\nu}}$. For example, for $f=1/(x(y-x))$ we have
$\res_{x=0}\res_{y=0}(f)=0$ and $\res_{y=0}\res_{x=0}(f)=1$.

\begin{remark} \label{rema.wedg}
Choose any basis $\gamma_1$, $\gamma_2$, \ldots, $\gamma_r$ of $V$
such that $\oplus_{k=1}^j\alpha_j=\oplus_{k=1}^j\gamma_j$ for
every $1\leq j\leq r$  and such that
$\gamma_1\wedge\gamma_2\wedge\cdots\wedge\gamma_r
  =\alpha_1\wedge\alpha_2\wedge\cdots\wedge\alpha_r$.
Then, by induction, it is easy to see that for $\phi\in \CR_{\CA^+}$

$$\res_{\alpha_r=0}\cdots\res_{\alpha_1=0}\phi
=\res_{\gamma_r=0}\cdots\res_{\gamma_1=0}\phi.$$

Thus given an ordered basis, we may modify $\alpha_2$ by
$\alpha_2+c\alpha_1$, \ldots, with the purpose of getting easier
computations.
\end{remark}
As for the usual residue, the iterated residue  can be
expressed as an integral as explained in \cite {BBCV}.
This fact allows change of variables.
% \bigskip
\subsubsection{Maximal proper nested sets adapted to a vector}
We recall briefly the notion of maximal proper nested set, $MNPS$ in short,  and some of their properties (see \cite{DCP}).

A subset
$S$ of $\CA^+$  is \emph{complete} if $S=\ll S\rr\cap\CA^+:$  here recall that $\ll S\rr$  is the vector
space spanned by $S$.   A complete subset $S$ is called \emph{reducible} if we can find a
decomposition $V=V_1\oplus V_2$  such that $S=S_1\cup S_2$ with
$S_1\subset V_1$ and $S_2\subset V_2$. Otherwise $S$ is said to be
\emph{irreducible}.
%Let $\CI$ be the set of irreducible subsets of $\CA$.

 A set
$M=\{I_1,I_2,\ldots,I_k\}$ of irreducible subsets of $\CA^+$  is
called \emph{nested} if, given any subfamily $\{I_1,\ldots,I_m\}$ of $M$
such that there exists no $i$, $j$ with $I_i\subset I_j$, then the
set $I_1\cup\cdots\cup I_m$ is \emph {complete} and the elements
$I_j$ are the irreducible components of $I_1\cup I_2\cup\cdots\cup
I_m$.
Then every maximal nested set $M$, $MNS$ in short,  contains $\CA^+$ and has exactly $r$ elements.

We now recall how to construct all maximal nested sets.
We may assume  that $\CA^+$  is irreducible, otherwise just take one of the irreducible components.
If $M$ is a maximal nested set, the vector space  $\ll M\setminus \CA^+\rr $
is an hyperplane $H$, thus an admissible hyperplane.
\begin{definition} \label{defi.atta}
Let  $H$ be a  $\CA^+$-admissible
hyperplane. A maximal nested set  $M$  such that $\ll M\setminus \CA^+\rr =H$  is
said attached to $H$.
\end{definition}

Given $M$ a $MNS$ for $\CA^+$ attached to $H$, then
$\ll M\setminus \CA^+\rr$ is a $MNS$ for $H\cap \CA^+$.
Therefore maximal nested sets  for an irreducible
set  $\CA^+$ can be determined by induction over the set of $\CA^+$-admissible
hyperplanes.

 For computing the Jeffrey-Kirwan residue, we only need some particular $MNS$'s.
 Let us briefly review the main ingredients.

Fix a total order $\h$ on $\CA^+.$
%For example, we can choose a linear functional $\h$ on $V$
%so that the values $\h(\alpha_i)$ are all distinct and positive.
%Thus the value $\h(\alpha)$ is larger if $\alpha$ is deeper in
%the interior of the cone.
Let $M=\{S_1,S_2,\ldots,S_k\}$ be a set of subsets of $\CA^+$
and choose in each $S_j$  the element $\alpha_j$ maximal for the
order given by $\h$.
This defines a map $\Theta$ from $M$ to $\CA^+$  and we say that $M$ is \emph{proper} if $\Theta(M)=\overrightarrow{M}$ is a
basis of $V$.
We denote by $\CP(\CA^+)$ the set of   $MPNS.$
%(maximal proper nested sets).

So we have associated to every maximal proper nested set $M$ an
ordered basis, by sorting  the set
$\overrightarrow{M}=[\alpha_1,\alpha_2,\ldots,\alpha_r]$
of elements of $\CA^+$.

Let $v$ be an element in $V$ not belonging to any admissible hyperplane.
\begin{definition}
Define $\CP(v,\CA^+)$ to be the set of $M\in  \CP(\CA^+)$ such that
  $v\in \CC(M)=\CC(\alpha_1,\ldots,\alpha_r)$.
\end{definition}
When there is no possibility of confusion we will drop simply write $\CP(v)$ for $\CP(v,\CA^+).$

We are now ready to state the basic formula for our calculations.

\begin{theorem}[ \cite{DCP}] \label{theo.DCP}
Let $\gc$ be a chamber and let $v\in\gc$. Then, for $\phi\in
{\widehat\CR}_{\CA^+}$, we have
$$\JK_\gc(\phi)
  =\sum_{M\in \CP(v,\CA^+)}
    \frac{1}
         {\vol(M)}\Ires_{\overrightarrow{M}}\phi.$$
\end{theorem}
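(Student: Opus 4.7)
The plan is to establish the identity by reducing to a comparison of two linear functionals on $\widehat{\CR}_{\CA^+}$ and then checking agreement on the basic fractions $f_\sigma = 1/\prod_{i\in\sigma}\alpha_i$, $\sigma\in\bases(\CA^+)$. Both $\JK_\gc$ and the right-hand side are clearly linear. Each iterated residue $\Ires_{\overrightarrow{M}}$ raises homogeneous degree by $r$ and hence vanishes on homogeneous components of degree different from $-r$; the same is true of $\JK_\gc$ by construction. So I can restrict attention to the degree $-r$ part. Using the Brion--Vergne decomposition recalled in the text, any rational function in $\CR_{\CA^+}$ of homogeneous degree $-r$ is a linear combination of the basic fractions $f_\sigma$ and degenerate fractions (those whose denominators span a proper subspace of $V$).

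The next step is to dispose of the degenerate fractions. If $g = 1/\prod_j\ell_j$ is degenerate, the forms $\ell_j$ lie in a hyperplane $H\subset U$, so after a linear change of variable taking $\overrightarrow{M} = [\alpha_1,\ldots,\alpha_r]$ to coordinates $u_1,\ldots,u_r$ with $u_1\perp H$, the function $g$ is independent of $u_1$, and $\res_{u_1=0}g = 0$. Hence $\Ires_{\overrightarrow{M}}g = 0$ for every $M$, and the right-hand side vanishes on degenerate fractions, matching the vanishing of $\JK_\gc$ on them.

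It remains to verify, for every $\sigma\in\bases(\CA^+)$, the identity
\begin{equation*}
\sum_{M\in\CP(v,\CA^+)}\frac{1}{\vol(M)}\,\Ires_{\overrightarrow{M}}(f_\sigma)
=\begin{cases}\vol(\sigma)^{-1}& \text{if } v\in\CC(\sigma),\\ 0 & \text{otherwise.}\end{cases}
\end{equation*}
The computation of a single iterated residue on a basic fraction is algebraic: writing each $\beta\in\sigma$ in the coordinates dual to $\overrightarrow{M}$ as $\beta = \sum_i c_{i,\beta}u_i$ and expanding each factor $1/\beta$ as a geometric series in the higher-index variables, $\Ires_{\overrightarrow{M}}(f_\sigma)$ is nonzero only when the pairing between $\sigma$ and the flag $\ll\alpha_1\rr\subset\ll\alpha_1,\alpha_2\rr\subset\cdots$ has the right triangularity, and equals $\pm(\det(c_{i,\beta}))^{-1}$ when it is nonzero. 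The content of De Concini--Procesi's construction is that the combinatorics of maximal proper nested sets is exactly designed so that, after summing over $M\in\CP(v)$, these signed triangular contributions telescope into the characteristic function of $\{v\in\CC(\sigma)\}$ divided by $\vol(\sigma)$. Equivalently, the cone $\CC(\sigma)$ decomposes as a disjoint union of the simplicial cones $\CC(M)$ for those $M$ whose basis $\overrightarrow{M}$ is compatible with $\sigma$ in the triangular sense above, and the volumes match.

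The main obstacle is the combinatorial identity in the last paragraph: one must verify that the collection $\CP(v,\CA^+)$ is rich enough to detect chamber membership for every basic cone $\CC(\sigma)$ and, at the same time, that the ``spurious'' nonzero contributions of $\Ires_{\overrightarrow{M}}(f_\sigma)$ coming from bases $\overrightarrow{M}$ whose simplicial cones only partially overlap $\CC(\sigma)$ cancel correctly. This is precisely the content of the nested-set machinery of \cite{DCP}, and verifying it is where the weight of the proof lies; the functional-analytic reductions in the first two paragraphs are routine by comparison.
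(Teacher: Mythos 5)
The paper itself offers no proof of this theorem: it is quoted from De Concini--Procesi \cite{DCP}, so there is no internal argument to compare yours with. Judged on its own terms, your proposal has a genuine gap. The reductions in your first two paragraphs (linearity, vanishing outside degree $-r$, the Brion--Vergne decomposition into basic and degenerate fractions) are routine, but the whole content of the theorem is the identity you isolate on the basic fractions: that $\sum_{M\in \CP(v,\CA^+)}\vol(M)^{-1}\Ires_{\overrightarrow{M}}(f_\sigma)$ equals $\vol(\sigma)^{-1}$ when $v\in\CC(\sigma)$ and $0$ otherwise. At exactly this point you assert that the signed triangular contributions ``telescope,'' that $\CC(\sigma)$ decomposes into the cones $\CC(M)$ for the compatible $M$, and that this ``is precisely the content of the nested-set machinery of \cite{DCP}.'' That is a restatement of the theorem, not a proof of it: the claim that the set $\CP(v,\CA^+)$ of maximal proper nested sets adapted to $v$ is exactly rich enough to produce these cancellations is what has to be established (in \cite{DCP} this rests on the structure theory of nested sets, the fact that $\Theta(M)=\overrightarrow{M}$ is a basis forcing the right triangularity, and an induction over admissible hyperplanes and irreducible components), and nothing in your text supplies it. Even the auxiliary claims you lean on --- that $\Ires_{\overrightarrow{M}}(f_\sigma)$ is either $0$ or $\pm\det(c_{i,\beta})^{-1}$, and that the nonzero contributions correspond to a decomposition of $\CC(\sigma)$ with matching volumes --- are nontrivial and left unproved.

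A secondary, fixable point: your argument that iterated residues kill degenerate fractions does not work as written. You cannot pass to arbitrary coordinates ``with $u_1\perp H$'': the iterated residue is computed in the coordinates attached to the ordered basis $\overrightarrow{M}\subset\CA^+$, and only changes of basis preserving the flag $\ll\alpha_1\rr\subset\ll\alpha_1,\alpha_2\rr\subset\cdots$ are permitted (Remark~\ref{rema.wedg}); moreover a degenerate fraction need not be independent of $u_1$ in these coordinates, and its first residue need not vanish. For instance, with $r=3$ the degenerate fraction $1/(u_1u_2(u_1+u_2))$ has $\res_{u_1=0}=1/u_2^2\neq 0$, and the vanishing only occurs at the next step. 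The correct argument is a short induction: if no denominator form is proportional to $u_1$, the first residue is $0$; otherwise, after taking it, the denominator forms of the result span a space of dimension one less than before, hence still a proper subspace of the remaining variables, and one iterates. So the conclusion of your second paragraph is true but needs this repair; the essential defect, however, is that the combinatorial identity carrying the weight of the theorem is assumed rather than proved.
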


Let us finally sketch the algorithm to determine
 $\CP(v,\CA^+)$ without going to construct all the $MNS$'s.
 If $\CA^+=\CA_1^+\times \CA_2^+$  is reducible, then $\CP(v,\CA^+)$ is the product of the corresponding sets $\CP(v_i,\CA_i^+)$.

 Assume  $\CA^+$ is irreducible.
Let $\theta$  be the highest root of the system $\CA^+$ (for our order $\h$).
We start by constructing all possible $\CA^+$-admissible
hyperplanes $H$ for which $v$ and $\theta$ are strictly on the same side of $H$.
In particular, the   hyperplane $H$  does not contain the highest root.

  Then we  compute the projected vector $proj_Hv$ on $H$ parallel to $\theta$:
 $v=\proj_H v+t \theta$, with $\proj_H v\in H$ and $t>0$  and compute $\CA^+\cap H.$
If $M_H$ is in $\CP(proj_H v,\CA^+\cap H)$, then $M=\{op (M_H),\CA^+\}$ is in
$\CP(v,\CA^+)$. Running through all hyperplanes $H$, for which $v$ and $\theta$ are strictly on the same side of $H$,  we obtain the set $\CP(v,\CA^+)$.
Let us summarize the scheme of the algorithm in
Figure~\ref{algo.MPNS}. Recall that we have as input a regular vector
$v$, and as output the list of all $MPNS$'s belonging to
$\CP(v,\CA^+)$.
\begin{figure}[h!]
\begin{center}
%\begin{tabbing}
\scriptsize{
\begin{tabular}{|ll|}\hline

for each hyperplane $H$ do&\\
\quad check if $v$ and $\theta$ are on the same side of $H$&\\
\quad \quad if not, then skip this hyperplane&\\
\quad \quad define the projection $\proj_H(v)$ of $v$ on $H$ along $\theta$&\\
\quad \quad write $\CA\cap H$ as the union of its irreducible
       components $I_1\cup\cdots\cup I_k$&\\
\quad \quad write $v$ as $v_1\oplus\cdots\oplus v_k$ according
      to the previous decomposition&\\
\quad \quad \quad for \= each $I_j$ do&\\
\quad \quad\quad  compute all MPNS's for $v_j$ and $I_j$&\\
\quad \quad\quad  collect all these MPNS's for $v_j$ and $I_j$&\\
\quad \quad \quad end of loop running across $I_j$'s&\\
\quad  collect all MPNS's for the hyperplane $H$ by taking the product of $\CP(I_j,v_j)$&\\
 end of loop running across $H$'s&\\
return the set of all MPNS's for all hyperplanes &\\
\hline
\end{tabular}}
%\end{tabbing}
\end{center}
\caption{Algorithm for MPNS's computation (general case)}
\label{algo.MPNS}
\end{figure}

%\begin{figure}[h!]
%\begin{center}
%\begin{tabbing}
%\begin{tabular}{|c|}\hline

%for \= each hyperplane $H$ do\\
   % \> check if $v$ and $\theta$ are on the same side of $H$\\
    %\> if not, then skip this hyperplane\\
    %\> define the projection $\proj_H(v)$ of $v$ on $H$ along $\theta$\\
    %\> write $\CA\cap H$ as the union of its irreducible
     %  components $I_1\cup\cdots\cup I_k$\\
    %\> write $v$ as $v_1\oplus\cdots\oplus v_k$ according
       %to the previous decomposition\\
    %\> for \= each $I_j$ do\\
    %\>     %\> compute all MPNS's for $v_j$ and $I_j$\\
    %\>     %\> collect all these MPNS's for $v_j$ and $I_j$\\
    %\> end of loop running across $I_j$'s\\
    %\> collect all MPNS's for the hyperplane $H$ by taking the product of $\CP(I_j,v_j)$\\
%end of loop running across $H$'s\\
%return the set of all MPNS's for all hyperplanes\\

%\end{tabbing}
%\end{center}

%\caption{Algorithm for MPNS's computation (general case)}
%\label{algo.MPNS}
%\end{figure}

In our program, we run this algorithm for an element $v$ not in any admissible hyperplane,  without knowing in advance if $v$ belongs to the cone $\CC(\CA^+)$. The algorithm returns a non empty set if and only  if $v$ belongs to
$\CC(\CA^+)$.

 \subsection{The Kostant function: another formula for subsystems of $A_r^+$}

In this article, we will be using partition functions for lists  $\Delta^+(A,B)$ described in the Example \ref{Ar}.
These lists are sublists of a system of type $A_r^+$ (with $r=p+q-1$).  In residue calculation, we can use change of variables and thus use a formula for which iterated residues will be easier to compute.

Let us describe this formula. We will describe it for sublists, eventually, with multiplicities of a system $A_r^+$.
 We take the notations of Example \ref{Ar}.

Let $\Phi$ be a sequence of  vectors  generating $V$ and of the form $(e_i-e_j), 1\leq i<j\leq (r+1)$, eventually with multiplicities.
 Let $m_{i,j} $ ($i<j$) be the multiplicity of the vector $e_i-e_j$
in $\Phi$   and  define
$t_j=m_{j,j+1}+\cdots+m_{j, r+1}-1.$
We recall our identification of $V$ with $\R^r$ and of
$U=V^*$ with $\R^r$ defined by duality. In this way, as we already observed  the root $e_i-e_j$ ($1\leq i<j\leq r$) produces the linear
function $u_i-u_j$ on $U$, while the root $e_i-e_{r+1}$ produces
the linear function $u_i$.

We are now ready to give another formula for the Kostant function in this situation.

\begin{theorem} \label{theo.flow}Let $\gc$ be a chamber of  $\CC(\Phi)$.
 Let $h=\sum_{i=1}^{r+1}h_i e_i=[h_1,\ldots,h_r]$,  then

$$ {\bf N}_{\Phi}^{\gc}(h)=\vol \left( V/V_\Z, dh \right)\JK_{\gc}( f_{\Phi}(h)(u)), {\rm where}$$ \ \

$$f_{\Phi}(h)(u)=\frac{\prod\limits_{i=1}^r(1+u_i)^{h_i+t_i}}{\prod\limits_{i=1}^ru_i^{m_{i,r+1}
} \prod\limits_{1\leq i< j\leq r}(u_i-u_j)^{m_{i,j}}}$$

\end{theorem}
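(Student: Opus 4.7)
The plan is to reduce the theorem to a single analytic change of variables applied to the general Kostant-function formula of Theorem~\ref{theo.main}.

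First, since $\Phi$ is (with multiplicities) a sublist of $A_r^+$, it is unimodular: every basic subset is a $\Z$-basis of $V_\Z$. Therefore the set $F=\{0\}$ satisfies condition (\textbf{C}), and Definition~\ref{def:NKc} together with Theorem~\ref{theo.main} give
$$
{\bf N}^{\gc}_\Phi(h)=\vol(V/V_\Z,dh)\,\JK_{\gc}\!\big(K(0,h)(u)\big),
\qquad
K(0,h)(u)=\frac{e^{\langle h,u\rangle}}{\prod_{\alpha\in\Phi}(1-e^{-\langle\alpha,u\rangle})}.
$$
Thus it suffices to prove the algebraic identity $\JK_{\gc}(K(0,h)(u))=\JK_{\gc}(f_{\Phi}(h)(u))$.

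Next, I would perform the multiplicative change of variables $1+v_i=e^{u_i}$, i.e.\ $u_i=\log(1+v_i)$, an analytic diffeomorphism between neighbourhoods of the origin. Using the identifications of the paper (in which $\langle e_i,u\rangle=u_i$ for $i\le r$ and $\langle e_{r+1},u\rangle=0$), one checks that
$e^{\langle h,u\rangle}=\prod_i(1+v_i)^{h_i}$, that $1-e^{-u_i}=v_i/(1+v_i)$ for the roots $e_i-e_{r+1}$, and that $1-e^{-(u_i-u_j)}=(v_i-v_j)/(1+v_i)$ for the roots $e_i-e_j$ with $i<j\le r$. Substituting and collecting the factors of $(1+v_i)$ coming from each denominator contributes the exponent $\sum_{k>i}m_{i,k}=t_i+1$, yielding
$$
K(0,h)(u(v))=\frac{\prod_i(1+v_i)^{h_i+t_i+1}}{\prod_i v_i^{m_{i,r+1}}\prod_{1\le i<j\le r}(v_i-v_j)^{m_{i,j}}}.
$$

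Finally, by Theorem~\ref{theo.DCP}, $\JK_{\gc}$ is a finite $\Q$-linear combination of iterated residues $\Ires_{\vec M}$ with $M\in\CP(v,\Phi)$, and each such iterated residue admits the integral representation mentioned after Remark~\ref{rema.wedg} (a product of contour integrals around $u=0$). Under the diffeomorphism $u\mapsto v$ the Jacobian is $du_i=dv_i/(1+v_i)$, and one obtains
$$
\Ires^{u}_{\vec M}\!\big(\phi(u)\big)=\Ires^{v}_{\vec M}\!\left(\frac{\phi(u(v))}{\prod_i(1+v_i)}\right).
$$
Dividing $K(0,h)(u(v))$ by $\prod_i(1+v_i)$ lowers each exponent of $(1+v_i)$ by one and produces precisely $f_{\Phi}(h)(v)$; summing over $M\in\CP(v,\Phi)$ with the weights $1/\vol(M)$ gives $\JK_{\gc}(K(0,h)(u))=\JK_{\gc}(f_{\Phi}(h)(u))$ after renaming $v\to u$. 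Multiplying by $\vol(V/V_\Z,dh)$ yields the theorem.

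The main obstacle is the third step: justifying the substitution for the purely algebraic $\JK$ functional rather than for a convergent contour integral. This is handled by combining Theorem~\ref{theo.DCP} (which expresses $\JK_{\gc}$ as a finite sum of iterated residues) with the integral representation of the iterated residue, so that the Jacobian of an analytic change of coordinates transports correctly through the formula. The remaining arithmetic — verifying that $\sum_{k>i}m_{i,k}=t_i+1$ so that the single Jacobian factor $\prod_i(1+v_i)^{-1}$ absorbs exactly the discrepancy between the exponents $h_i+t_i+1$ and $h_i+t_i$ — is routine bookkeeping.
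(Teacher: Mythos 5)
Your proposal is correct and follows essentially the same route as the paper: write out $K(0,h)$ for the system $\Phi$ using the unimodularity of subsystems of $A_r^+$ (so $F=\{0\}$), perform the change of variables $1+z_i=e^{u_i}$, which preserves the singular hyperplanes $u_i=0$ and $u_i=u_j$, and absorb the Jacobian $du_i=dz_i/(1+z_i)$ through the integral representation of the iterated residue, which lowers each exponent $h_i+t_i+1$ to $h_i+t_i$. Your bookkeeping $\sum_{k>i}m_{i,k}=t_i+1$ and the justification via Theorem~\ref{theo.DCP} plus the contour-integral form of $\Ires_{\vec M}$ match the paper's (more terse) argument.
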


Thus, when $h\in V_\Z\cap (\gc-Z(\Phi))$, we  have
$$ N_{\Phi}(h)= {\bf N}_{\Phi}^{\gc}(h).$$

\begin{example}
\begin{itemize}
\item If $\Phi=A_r^+$, then
$$ f_{A^+_r}(h)(u)=\frac{\prod_{i=1}^{r}(1+u_i)^{h_i+r-i}}
     {\prod_{1\leq i<j\leq r}(u_i-u_j)
      \times
      \prod_{i=1}^{r}u_i}.$$
\item
 Let $ p, q$ integers such that $p + q = r + 1$ and $\Phi$ be the system of
positive noncompact root for $A^+_r$   defined by $\Phi=\{e_i - e_j , 1 \leq i \leq p, \ p + 1 \leq
j \leq  r + 1\}.$
That is $\Phi=\Delta^+(A,B)$ with $A=[1,\ldots,p]$ and $B=[p+1,\ldots, p+q]$.

Then
$$f_{\Phi}(h_1,h_2,\ldots,
h_r)(u)=$$$$\frac{(1+u_1)^{h_1+q-1}\cdots(1+u_p)^{h_p+q-1}(1+u_{p+1})^{h_{p+1}-1}\cdots (1+u_{p+q-1})^{h_{p+q-1}-1}}{(u_1-u_{p+1})\cdots (u_1-u_{p+q-1})\cdots(u_p-u_{p+1})\cdots(u_p-u_{p+q-1})u_1u_2\cdots u_p}$$
\end{itemize}
\end{example}
\begin{proof}
The function $K(0,h)(u)=e^{\ll h,u\rr}/
  \prod_{\alpha\in\Phi}(1-e^{-\ll\alpha,u\rr})$
computed for the system $\Phi$
is
$$K(0,h)(u)= \frac{e^{h_1  u_1}e^{h_2 u_2}\cdots e^{h_r u_r}}
{\prod\limits _{i=1}^r(1-e^{-u_i})^{m_{i,r+1}}\prod\limits _{1\leq i<
j\leq r}(1-e^{-(u_i-u_j)})^{m_{i,j}}}$$
%:

Note that the change of variable $1+z_i=e^{u_i}$ preserves the
hyperplanes $u_i=0$ and $u_i=u_j$ and that  $z_i=e^{u_i}-1$ leads to $dz_i=e^{u_i}du_i=(1+z_i)du_i.$   Thus after the change of variable  we get
the required formulae.
 \end{proof}

\subsection{Computation of Kostant partition function: general scheme}\label{Kostantgeneral}

\subsubsection{Numeric}\label{numeric}

We have as input $\CA^+$ a sequence of vectors  in our lattice $V_\Z$, a vector $h\in V_\Z$, and we want to compute  $N_{\CA^+}(h)$.  We will compute it  by  !!
 $$N_{\CA^+}(h)= N_{\CA^+}(h+\rho_n -\rho_n).$$
We mean: Let $\tau$ be any tope such that $h'=h+\rho_n$  belongs to the closure of  $\tau$. Using Theorem \ref{theoremtope}
 then
$$N_{\CA^+}(h)=N_{\CA^+}(h'-\rho_n)={\bf N}_{\CA^+}^{\tau}(h'-\rho_n).$$

To compute a tope $\tau$ containing $h'$, we can move $h+\rho_n$ in any generic direction $\epsilon$.\label{deform}

Here is an outline of the steps needed to compute the number
$N_{\CA^+}(h)$  by the formula
$N_{\CA^+}(h)={\bf N}_{\CA^+}^{\tau}(h).$

{\bf Input}:  a vector $h\in V_\Z$, and $\CA^+$ a sequence of vectors in $V_\Z$.

{\bf Output}:  the number   $N_{\CA^+}(h)$.

\begin{itemize}
\item {\bf Step 1} Compute the  Kostant function $$K(h)=K(0,h)=\frac{e^{h}}{\prod_{\alpha\in \CA^+}(1-e^{-\alpha})}$$
or more generally compute a set $F$ and   the functions  $K(G,h)$ for $G\in F$.
\item  {\bf Step 2} Find a small vector $\epsilon$ so that if $h$ is in $V_\Z$, the vector $h+\rho_n+\epsilon$ does not belong to any admissible hyperplane.
Thus the vector $h_{reg}=h+\rho_n+\epsilon$ is in a unique tope $\tau$.
 The procedure to obtain $h_{reg}$ is called $DefVector_{nc}(h).$
 \item  {\bf Step 3}
Compute  the set $All:=\CP(h_{reg},\CA^+)$  as explained in Fig.\ref{algo.MPNS}.
\item  {\bf Step 4}
%$$out:=\sum_{G\in F} e^{\ll h,2\pi\sqrt{-1}G\rr}\sum_{M\in All} \Ires_{\overrightarrow{M}} K(G,h)$$
    Compute   ${\bf N}_{\CA^+}^{\tau}(h)$
       by computing  the iterated residues  of $K(G,h)$ associated  to the various ordered basis $\overrightarrow{M}$ for $M$ varying in the set $All$.
       That is  compute    the number

\fbox{\parbox{8cm}{\[out:=\sum_{G\in F} e^{\ll h,2\pi\sqrt{-1}G\rr}\sum_{M\in All} \Ires_{\overrightarrow{M}} K(G,h)\]}}
%\end{document}
%$out:=\sum_{G\in F} e^{\llh,2\pi\sqrt{-1}G\rr}\sum_{M\in All} \Ires_{\overrightarrow{M}} K(G,h)$

\noindent where $\overrightarrow{M}$ is the ordered basis attached to $M$.
\medskip
\medskip

  \noindent Then  \fbox{$N_{\CA^+}(h)=out$}

\end{itemize}

\subsubsection{Symbolic}\label{alsymb}
The previous calculation runs with symbolic parameters.
If $hfix$ is an element in $V_\Z$, we might want to find a tope $\tau$ such that $hfix$ belongs to the closure of $\tau$.
Then
$$N_{\CA^+}(h)={\bf N}_{\CA^+}^{\tau}(h)$$
will be valid whenever $h$ is in the closure of $\tau$. Here is the outline of the algorithm.

{\bf Input:}  $hfix$ is an element in $V_\Z$ and $\CA^+$ a sequence of vectors.

 {\bf Output:} A domain $D\subset V$ and an exponential polynomial function  $P(h)$ on $V$.

 The domain $D$ is a closed convex cone in $V$ (described by linear inequations) such that $hfix$ is in $D$. The formula $P(h)=N_{\CA^+}(h)$ is valid whenever $h\in D\cap V_\Z$.

\begin{itemize}
\item  {\bf Step 1}  Consider $h$ as a parameter and
compute the  Kostant function $K(h)(u)$
as a function of $(h,u)$
given by $$K(h)(u)=K(0,h)(u)=\frac{e^{<h,u>}}{\prod_{\alpha\in \CA^+}(1-e^{-<\alpha,u>})}$$
or more generally compute a set $F$ and   the functions   $K(G,h)(u)$ for $G\in F,$ as function of $(h,u)$.
\item  {\bf Step 2}  Find a small vector $\epsilon$ so that if $hfix$ is in $V_\Z$, then the  vector $hfix_{reg}:=hfix+\rho_n+\epsilon$ does not belong to any admissible hyperplane.

Compute the domain $D:={\overline \tau}$ where $\tau$ is the unique tope $\tau$ containing $hfix_{reg}$.
\item  {\bf Step 3} Compute  the set $All:=\CP(hfix_{reg},\CA^+)$  as explained in Fig.\ref{algo.MPNS}.
\item  {\bf Step 4}  Compute   ${\bf N}_{\CA^+}^{\tau}(h)$
       by computing  the iterated residues  of $K(G,h)$ associated  to the various ordered basis $\overrightarrow{M}$ for $M$ varying in the set $All$, here $h$ is treated now as a parameter.

\noindent That is we compute
$$out:=\sum_{G\in F} e^{\ll
h,2\pi\sqrt{-1}G\rr}\sum_{M\in All} \Ires_{\overrightarrow{M}} K(G,h)$$
where $\overrightarrow{M}$ is the ordered basis attached to $M$.
The output $out$ is an exponential polynomial function $P(h)$ of $h$ and once again we compute

\fbox{\parbox{7cm}{ $N_{\CA^+}(h)=P(h)=out, \ \forall h\in D\cap V_{\Z}$}}

\noindent The domain $D$ is a rational polyhedral cone which includes $hfix$.
\end{itemize}
In practice, this works only for small dimensions and when $\CA^+$ is not too big.

\noindent We will program variations of these algorithms, with less ambitious goals.

 \subsection{Computation of Blattner formula: general scheme.}\label{general}

In this subsection,  we  summarize the steps to compute  Blattner's  formula and the general scheme  to obtain the region of polynomiality. The relative algorithms  will be outlined in Section \ref{programs}.

Let $G,K,T$ be given as in Section \ref{BF}.
Let  $\Delta_n\subset \mathfrak t^*$  be the list of noncompact roots.

Our inputs are  $\lambda \in P_{\mathfrak g}^r$
and $\mu \in P_{\mathfrak k}^r$.
The goal is the study of the  function $\mu\to
 m^{\lambda}_{\mu}.$
Let $\CA^+=\Delta_n^+(\lambda)$
and recall  that in this case  a  $\CA^+$-admissible hyperplane is called  a noncompact wall.

We  use {\bf Blattner's formula}. In our notations:
 \begin{equation}   m^{\lambda}_{\mu} =\sum_{w \in \mathcal W_{c}} \epsilon(w)  N_{\Delta_n^+(\lambda)}(w\mu-\lambda-\rho_n)
\end{equation}

\subsubsection{Numeric}\label{Bnum}

{\bf Input}    $\lambda \in P_{\mathfrak g}^r$
and $\mu \in P_{\mathfrak k}^r$.

\noindent {\bf Output}  a number.

The algorithm is   clear:

\begin{itemize}
\item  Compute $\CA^+=\Delta_n^+(\lambda)$ and $\rho_n$.

  \item Compute the Kostant function $K(G,h)$ for this system $\CA^+$.
  \item Compute a finite set $F$ satisfying condition $(\bf C) $.
  \item Compute a  small element $\epsilon$ such that  $\mu_{reg}=\mu+\epsilon$  does not belong to any affine hyperplane of the form $w\lambda+H$ where $H$ is a noncompact wall, $w\in \CW_c$.

\item Compute for all $w\in \CW_c$
the  number
$$contribution_w:=N_{\CA^+}(w\mu-\lambda-\rho_n)$$ using the algorithm described in \ref{numeric}.

That is compute

$All_w:=\CP(w*(\mu_{reg})-\lambda,\CA^+)$  as explained in Fig.\ref{algo.MPNS}.

Then compute
$$contribution_w:= \sum_{G\in F} e^{\ll
h,2\pi\sqrt{-1}G\rr}\sum_{M\in Allw} \Ires_{\overrightarrow{M}} K(G,w\mu-\lambda-\rho_n).$$
ENDs

\item Finally compute

\fbox{\parbox{8cm}{$$out:=m^{\lambda}_{\mu} =\sum_{w \in \mathcal W_{c}} \epsilon(w)* contribution_w$$}}

\end{itemize}

 Let us comment briefly:
 If $w (\mu_{reg})-\lambda$ is not in the cone generated by non compact positive roots, the set $All_w$ is an empty set.
 In particular we may restrict the computation by diverse consideration
to {\bf valid permutations}  which have some chance to give a non empty set, see Section \ref{valid}.)

\subsubsection{  Symbolic}\label{symbolic}
The preceding calculation runs with symbolic parameter and we take advantage of this to find regions of polynomiality.

Let's explain how.

 Let $\a$ be a chamber in $\mathfrak t^*$ for the system $\Delta$ of roots of $\mathfrak \g$.
  Let $U$  be the open set of $(\lambda,\mu)\in \a\times \mathfrak t^*$ such that
 $w\lambda-\mu$ does not belong to any non compact wall.
 Let  $(\lambda_0,\mu_0)\in U$.
 Then we define $R(\lambda_0,\mu_0)$ to be the closure of connected component of $U$ containing $(\lambda_0,\mu_0)$.
This region is a cone in $\mathfrak t^*\times \mathfrak t^*$ with non empty interior and can be described by linear inequalities in $\lambda,\mu$.

For this domain we can compute a polynomial formula and
state the following result.

If $\lambda$ varies in $\a$, the systems $\Delta_n^+(\lambda),\Delta_c^+(\lambda)$ determined by $\lambda$ remains the same. We denote it by $\Delta_n^+$, $\Delta_c^+$.

Furthermore, if $(\lambda,\mu)\in R(\lambda_0,\mu_0)$, for any $w\in \mathcal W_c$, the element $w\lambda-\mu$ lies in a  tope $\tau_w$ for the system $\Delta^+_n$ which depends only of $w$.

\begin{theorem}
The domain $R(\lambda_0,\tau_0)$ is a domain of polynomiality for the Duistermaat-Heckman measure and thus is a domain of quasi-polynomiality  for the multiplicity function
$m^{\lambda}_{\mu}$.

More precisely, for
 $(\lambda,\mu)\in R(\lambda_0,\mu_0)\cap (P_\mathfrak
g^{r}\times P_\mathfrak \k^r),$
we have

\begin{equation}\label{symb}
m^{\lambda}_\mu= \sum_{w\in {\mathcal W}_c}\epsilon(w)
{\bf N}^{\tau_w}_{\Delta_n^+}(w\lambda-\mu-\rho_n).
\end{equation}

\end{theorem}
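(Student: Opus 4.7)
My plan is to derive formula (\ref{symb}) by a termwise application of Theorem \ref{theoremtope} to Blattner's formula (\ref{blattner}), and to obtain the Duistermaat-Heckman claim by the same combinatorial analysis applied to the continuous analogue (\ref{eq:dhvpol}).

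First I would pin down the combinatorial structure of $R(\lambda_0,\mu_0)$. By construction it is the closure of a connected component of the open set $U$ on which, for each $w\in\mathcal W_c$, the affine-linear map $(\lambda,\mu)\mapsto w\lambda-\mu$ stays off every non-compact wall. Since the topes are the connected components of the complement of the admissible hyperplanes, continuity of this map forces $w\lambda-\mu$ to lie in $\overline{\tau_w}$ for a single tope $\tau_w$ throughout $R(\lambda_0,\mu_0)$, with $\tau_w$ depending only on $w$ and on the initial component. Similarly, the positive system $\Delta^+(\lambda)=\Delta^+$ and the vector $\rho_n$ remain constant as $\lambda$ varies in the chamber $\a$.

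The second step is a direct substitution. For $(\lambda,\mu)$ in $R(\lambda_0,\mu_0)\cap(P_{\mathfrak g}^r\times P_{\mathfrak k}^r)$, the integrality of $\rho_n,\lambda,\mu$ places $w\lambda-\mu-\rho_n$ in $\overline{\tau_w}\cap P_n$, so Theorem \ref{theoremtope} will yield
\[
N_{\Delta_n^+}\bigl(w\lambda-\mu-\rho_n\bigr)={\bf N}_{\Delta_n^+}^{\tau_w}\bigl(w\lambda-\mu-\rho_n\bigr).
\]
Plugging these identities termwise into Blattner's formula, after the standard Weyl-group reindexing of the alternating sum that converts the argument $w\mu-\lambda$ into the form $w\lambda-\mu$, will produce (\ref{symb}).

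The Duistermaat-Heckman statement would follow from the same combinatorial input applied to (\ref{eq:dhvpol}): each spline $w(\delta_\lambda\ast Y_{\Delta_n^+})$ is polynomial on the region where $w\lambda-\mu$ stays in a fixed chamber, a condition guaranteed by the definition of $R(\lambda_0,\mu_0)$, and summing with signs preserves polynomiality. The consistency between this polynomial and the top-degree part of the quasi-polynomial in (\ref{symb}) comes from matching ${\bf Y}_{\Delta_n^+}^{\gc}$ with the leading component of ${\bf N}_{\Delta_n^+}^{\gc}$, already recorded in the proof of the earlier unimodular proposition. The main obstacle I anticipate is combinatorial bookkeeping: verifying that $w\lambda-\mu-\rho_n$ lands in the extended domain $\overline{\tau_w}-Z(\Delta_n^+)$ required by Theorem \ref{theoremtope}, not merely in $\overline{\tau_w}$. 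This should reduce to the inclusion noted in the remark after Theorem \ref{theo.main} --- that $\gc-Z$ contains $\overline\gc$ for any interior chamber $\gc$ --- together with a check that shifting by $-\rho_n$ does not push the argument across any admissible wall.
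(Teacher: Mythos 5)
Your overall strategy is the intended one: the paper offers no separate argument for this theorem, and the implicit proof is exactly what you describe --- on $R(\lambda_0,\mu_0)$ each element $w\mu-\lambda$ (equivalently $w\lambda-\mu$, since the noncompact walls are linear and permuted by $\mathcal W_c$) stays in the closure of a single tope $\tau_w$, one applies Theorem \ref{theoremtope} termwise to Blattner's formula \eqref{blattner}, and the Duistermaat--Heckman half follows by the same chamber bookkeeping applied to \eqref{eq:dhvpol}. Also note that your anticipated ``main obstacle'' is not one: Theorem \ref{theoremtope} is already stated for shifted arguments, so all you need is $w\mu-\lambda\in\overline{\tau_w}\cap P_n$ (the integrality being the remark after \eqref{blattner} that $w\mu-\lambda-\rho_n$ is a weight); the passage to $\gc-Z$ is internal to the proof of Theorem \ref{theoremtope} via Theorem \ref{theo.main} and needs no extra check from you.

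The genuine gap is the ``standard Weyl-group reindexing of the alternating sum that converts the argument $w\mu-\lambda$ into the form $w\lambda-\mu$.'' No such reindexing exists: substituting $w\mapsto w^{-1}$ turns $w\mu-\lambda$ into $w^{-1}\mu-\lambda$, not into $w\lambda-\mu$; to exchange the roles of $\lambda$ and $\mu$ one must also negate the argument, and $-1\notin\mathcal W_c$, while $N_{\Delta_n^+}$ (supported on the cone spanned by $\Delta_n^+$) and the tope assignment $w\mapsto\tau_w$ are not invariant under such a sign change, nor in general under $\mathcal W_c$ since $\Delta_n^+(\lambda)$ is not $\mathcal W_c$-stable. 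So the identity you invoke is false as an algebraic manipulation, and already for $U(1,1)$ the two expressions $N(\mu-\lambda-\rho_n)$ and $N(\lambda-\mu-\rho_n)$ differ. The correct reading is that \eqref{symb} is a notational slip in the paper and should carry the argument $w\mu-\lambda-\rho_n$, consistently with \eqref{blattner}, with the semiclassical formula, and with the formula for $P_i(t)$ in Section \ref{asymp}; with that reading, delete the reindexing step and your proof is complete, but as written the step purporting to reconcile the two forms would fail.
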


The right hand side  of Equation \ref{symb} is a quasi polynomial
function of $\lambda,\mu$, antisymmetric in $\mu$.
It takes positive values if $\mu$ is dominant for $\Delta_c^+$.
Recall that, in this case, the multiplicity of $\mu$ on $\lambda$ is the absolute value of the function $m_\mu^{\lambda}$ above, (Sec.\ref{BF}).

Of course, the symbolic calculation above, with the present approach,  is limited  to very small examples.

Also,  we are not able to determine the largest domains where  the function $m_\mu^\lambda$ is given by a quasi polynomial formula.

%The algorithm implementing this procedure is described in Fig. \ref{algo.symbolic}.
%

\subsubsection{Asymptotic directions}\label{asymp}

We address now a simpler problem. We have the same setting  that in the previous section, but we are now testing only the noncompact walls  crossing in one fixed direction $\vec{v}$.
%We can test for asymptotic directions.

Let $\mu_0,\lambda_0$ be given, with $\lambda_0\in\a\cap P_\g^r$ an Harish-Chandra parameter, and $\mu_0\in \a_c\cap P_\k^r$.
Let $\vec{v}\in \a_c$ be integral. We will do the calculation of
$m^{\lambda_0}_{\mu_t}$ when $\mu_t =\mu_0+t \vec{v}$, with $t\geq 0$, is in the half-line in the direction $\vec{v}$. In the application $\mu_0$ will be the lowest $K$-type $\lambda_0-\rho_n$ of our discrete series $\pi^{\lambda_0}$.
We compute the values $t_i$ where $\mu_0+ t \vec{v} -w\lambda_0$  cross   a non compact   wall  (other than the ones
 which may contain the line $\mu_0+ t \vec{v} -w\lambda_0$). These are the values where the line $\mu_t$ may cross the domains of quasipolynomiality described above.
We order this finite set of  values $0\leq  t_1 <t_2 <t_i <\cdots <t_s.$
Consider the interval $I_i=[t_i,t_{i+1}], \ \ 0\leq i\leq s,$ where $t_0=0$ and $t_{s+1}=\infty.$

Consider  $I_i\cap \Z$,  an "interval" in $\Z$, described by two integers $[a_i,b_i]$, with  $a_i=ceil(t_i)$ and $b_i=floor(t_{i+1})$.

The "interval"  $I_i\cap \Z$ can also be reduced to a point.

Then we   find  exponential polynomial function $P^i(t)$ on $\R$  such that
$m^{\lambda}_{\mu_t}$ is equal to $P^i(t)$ for $t \in I_i\cap \Z$.

If particular, $\vec{v}$ is an asymptotic direction, if and only if the last quasipolynomial  $P^{s}(t)$
 does not vanish.

The algorithm is as follows.

For each consecutive value $t_i,t_{i+1}$, choose $\mu_r=\mu+t_r v$ with $t_i<t_r<t_{i+1}$. Then,  move very slightly $\mu_r$ in $\mu_r^{\epsilon}$. Then for each $w\in \mathcal W_c$, $w\mu_r^{\epsilon}-\lambda_0$ lies in a tope $\tau_i^w$ for $\Delta_n^+(\lambda_0)$.
Then
$$P_{i}(t)=\sum_{w} \epsilon(w) {\bf N}_{\Delta_n^+(\lambda)}^{\tau_i^w}(w\mu_t-\lambda-\rho_n).$$

The right hand side of this formula is an exponential polynomial function of $t\in \Z$.

The algorithm implementing this procedure is described in  Fig.\ref{algo.pol}.
Let us  remark that  our algorithm implementation is for type $A_r$ and thus the $P_i$ are polynomials.

\section{ Blattner's formula for $U(p,q)$}\label{Upq}

\subsection{Non compact positive  roots}\label{noncp}

With the notation of  Section \ref{blattner}  we let
$G=U(p,q)$ and  $K=U_p\times U_q$  be a maximal compact subgroup.

Let $E$ be a $p+q$-dimensional vector space with basis $e_i$  ($i=1$, \ldots, $p+q$) and $V$ as in Ex. \ref{Ar}).
Let $r=p+q-1$. Consider the set  of roots
 $$\Delta=\pm\{e_i-e_j\,|\,1\leq i<j\leq {p+q}\}.$$

We then choose $T$ to be the diagonal subgroup of $U(p,q)$, and identify $\t^*$
with $E$. In this identification the lattice of weights is identified with $\Z^{p+q}$:
 the element $(n_1,\ldots, n_{p+q})$ giving rise to the character
$$t=(\exp(i\theta_1), \ldots, \exp(i \theta_{p+q}))\to e^{i n_1\theta_1}\cdots e^{i n_{p+q}\theta_{p+q}}.$$

The system  of compact roots $\Delta_c$  is

$$\Delta_c=\pm\{e_i-e_j\,|\,1\leq i<j\leq {p} \}\cup \pm\{e_i-e_j\,|\,p+1\leq i<j\leq {p+q} \}.$$

                The system of non compact roots   is

$$\Delta_n=\pm\{e_i-e_j\,|\,1\leq i \leq {p}, p+1\leq j \leq {p+q} \}.$$

Let $\lambda$ be the Harish Chandra parameter  of a discrete series for $G$  and $\mu$ the Harish-Chandra parameter  of a finite dimensional irreducible representation of $K$.

 Because discrete series are equivalent under the action of the Weyl group of $K$,
  then we may assume that $\lambda=\ [\alpha,\beta]\ $ where $\alpha= \sum_{i=1}^{p}\alpha_i e_i=[\alpha_1,\ldots,\alpha_p], \alpha_1>\alpha_2>\cdots>\alpha_p$ and $\beta=\sum_{i=p+1}^{p+q}\beta_i e_i=[\beta_1,\ldots,\beta_q],\beta_1>\beta_2\cdots>\beta_q$.

Here $\alpha_i,\beta_j$ are integers if $p+q$ is odd, or half-integers if $p+q$ is even,
that is we fix as system of positive compact roots the system $\Delta_c^+=\{e_i-e_j\,|\,1\leq i<j\leq {p} \}\cup\{e_i-e_j\,|\,p+1\leq i<j\leq {p+q} \}.$

We parametrize $\mu\in P_\mathfrak k^r$ by another couple

$$\mu:=[a,b]
=[[a_1, a_2,\ldots,a_p],[b_1,\ldots,b_q]]$$
with $a_1>\cdots > a_p$
and $b_1>\cdots> b_q$.

Here $a_i$ are integers if $p$  is odd, half-integers if $p$ is even.
Similarly $b_j$ are integers if $q$ is odd, half-integers if $q$ is even.

As the center of $G$ acts by a scalar in an irreducible representation,
 we need that the sum of the coefficients of $\lambda$ has to be equal to the sum of the coefficients of $\mu$
for the multiplicity of $\mu$ in $\pi^{\lambda}$ to be non zero.  Thus $\lambda-\mu$ is in $V$, (see Ex. \ref{Ar}).

We now parametrize the different dominant chambers of $\mathfrak t^*$ modulo the Weyl group of $K$ by a subset $A$ of $[1,2,\ldots, r+1]$ of cardinal $p$. Let  $B$ its complementary subset in $[1,2,\ldots,r+1]$.

To visualize  $A,B$ we write a  sequence of lenght $p+q$ of elements $a,b$ with $a$ in the places of $A$, $b$ in the places of $B$: for example if $A=[3,5]$ and $B=[1,2,4]$, then we write $[b,b,a,b,a]$ or simply $bbaba$.
Now we use this visual aid and describe a permutation $w_A$ of the index $[1,2,\ldots, p+q]$, by putting the index $[1,\ldots,p]$ in order and in the places marked by $a$, and the remaining indices $[p+1,\ldots,p+q]$ in order and in the places marked by $b$, precisely $w_A: [1,2,3,4,5] \rightarrow [3,4,1,5,2]$.
The elements $w_A$ where $A$ varies describe a system of representatives of  $\Sigma_{p+q}/(\Sigma_p\times \Sigma_q)$, ($\Sigma_n$ being the permutations on $n$ letters),  that is also the chambers of $\mathfrak t^*$ for $\Delta(\g,\t)$ modulo $\mathcal W_c.$ In the above the chamber is described by $\{h=[h_1,h_2,h_3,h_4,h_5] \ \ \ \ h_3>h_4>h_1>h_5>h_2\}$

Let $\a_{standard}$ be the chamber $\alpha_1>\alpha_2>\cdots>\alpha_p>\beta_1>\beta_2\cdots>\beta_q$.

Then  if $\lambda\in w_A\a_{standard}$, we have $\Delta_c^+(\lambda)=\Delta_c^+$ and $\Delta_n^+(\lambda)$ is isomorphic to $\Delta^+(A,B)$, by relabeling the roots via $w_A^{-1}.$ The next example will clarify the situation.
The subset  $A$ can be read  from $\lambda$: we reorder completely the sequence $\lambda$ and define $A$ as the indices  where the first $p$ elements of $\lambda$  are relocated.

\begin{example}
Let $G=U(2,3)$ with compact roots $\Delta_c=\pm\{e_1-e_2, e_3-e_4,e_3-e_5,e_4-e_5\}$ and  noncompact roots $\Delta_n=\pm\{ e_1-e_3,  e_1-e_4,e_1-e_5,e_2-e_3 ,e_2-e_4,e_2-e_5\}.$
Let $\lambda=[\alpha,\beta]$ with $\alpha=[4,2]=4e_1+2e_2$ and
$\beta=[6,5,3]=6e_3+5e_4+3e_5$.

Then  $A=[3,5]$, $B=[1,2,4]$ that is the configuration $bbaba$.
The system of non compact positive roots  for $\lambda$
is $e_3-e_1, e_4-e-1,e-1-e_5,e_3-e_2,e_4-e_2,e_5-e_2$, isomorphic to $\Delta^+(A,B)$ by  the relabeling of the roots suggested by $w_A^{-1}$, that is $e_3=f_1, e_4=f_2,e_1=f_3,e_5=f_4,e_2=f_5.$
 \end{example}
Thus relabeling the roots, our calculations will be done for $\Delta^+(A,B)$ inside $A_r^+$,
where $\Delta^+(A,B)$ is given in Example \ref{Ar}.

Remark here that $\Delta^+(A,B)$ is irreducible if  $p$ and $q$ are strictly greater than $1$.
In contrast, when $p$ or $q=1$, the system is fully reducible.
Consider for example  the case  $p=1$.
 \begin{example}\label{ex}
\end{example}

 In this case $A$ has only $1$ element and  the system $\Delta^+(A,B)$ has $r$ elements and is  a base of $V$.
Thus $\Delta^+(A,B)$ is fully reducible in the direct sum of $p+q-1$ one dimensional systems.

For example take $U(1,r)$ with $A=[1]$ and $B=[2,\ldots,r+1]$. Then $\Delta^+(A,B)=\{e_1-e_2,e_1-e_3,\ldots,e_1-e_{r+1}\}$
is isomorphic to $A_1^+\times A_1^+\times \cdots \times A_1^+$.

\bigskip

Remark that  when $A_1,A_2$ have the same number of elements, although  the system of noncompact roots $\pm \Delta_n^+(A_1, B_1)$ and $\pm \Delta_n^+(A_2,B_2)$ are clearly isomorphic,   the combinatorial properties of $\Delta_n^+(A,B)$ may vary.

\noindent For example,  (see Figure \ref{figurea3nonc}), if $A=[1,2]$,  $B=[3,4]$, the cone generated by the non compact roots has basis a square and is not a simplicial cone. If $A=[1,3]$ and $B=[2,4]$, then the cone generated by the non
compact roots is the simplicial cone generated by
$e_1-e_2,e_2-e_3,e_3-e_4. $

\subsection{Algorithm to compute  $MPNS$:  the case of $\Delta^+(A,B)$}

With the notations  of Ex.\ref{Ar}, we denote by  $A$  a proper subset of $[1,2,\ldots, r+1]$ (with $r=p+q-1$) and by
$B$  the complementary subset to $A$ in $[1,2,\ldots r+1]$.

Given $v\in V$, and not on any admissible hyperplane,  we describe the algorithm to compute $\mathcal P(v,\Delta^+(A,B))$.

If $p$ or $q=1$, roots $\alpha$ in
$\Delta^+(A,B)$ form a basis on $V$, thus there is only one maximal nested set $M=\{\{\alpha\}, \alpha\in  \Delta^+(A,B)\}$.
Thus  $\mathcal P(v,\Delta^+(A,B))$ is empty or equal to $\{\vec M\}$ depending if $v$ belongs to the cone generated by $\Delta^+(A,B)$, or not. This is very easy to check.

If $p>1$ and $q>1$ , we determine the set   $\mathcal P(v,\Delta^+(A,B))$  by induction, going to admissible hyperplanes.

If $L\subsetneq  [1,2,\ldots,r+1]$  is a proper  subset of $[1,2,\ldots, r+1] $, we will also use the notation $L'=[i\notin L\,|\,1\leq i\leq r+1]$  for the complement of $L$. We  denote by  $H_ L:=\{v\in V\,|\  \sum_{i\in L} v_i=0\}$ the hyperplane determined by $L$;  the hyperplane $H_L$ is equal to
the hyperplane $H_{L'}$ determined by $L'$.

 \noindent It is  very simple to describe $\Delta^+(A,B)$-admissible hyperplanes, that is noncompact walls.
The description is  an adaptation of the  $A^+_r$-admissible hyperplanes  that appear in \cite{BBCV}.

Keeping $A$ fixed, with $|A|\neq 1,r$, we  consider hyperplanes $H_L$ indexed by subsets $L\subset 1,2,\ldots,r+1]$ with the following properties:
\begin{itemize}\item if  $|L|\neq 1$ or $r$, then $H_L$ is a   noncompact wall if and only if  both $A$ and $B$ intersect $L$ and $L'$.
In this case $\Delta_n^+(A,B)\cap H_L$ is  the product of two systems $\Delta_n^+(A\cap L, B\cap L)\times \Delta_n^+(A\cap L', B\cap L')$ and thus reducible.
\item if $L$ is of cardinal $1$, then $H_L$ is a   noncompact wall. In this case $\Delta_n^+(A,B)\cap H_L$ is   $\Delta_n^+(A\cap L',B\cap L')$ and thus irreducible.
\end{itemize}

At this point to compute the $MNPS$ or better, as we explained the $\overrightarrow{M}'s$,  we can proceed as in Fig.\ref{algo.MPNS}.
% using the fact that we know the $\overrightarrow{M}$, that is  the ordered basis,  when $|A|=1\  \text{or} \ r.$  t%We first list the hyperplanes $L$ not containing the highest non compact root.
The algorithm is outlined in Fig.\ref{algo.nonc}.

We conclude with the following observation.
A necessary and sufficient condition for the set $MPNS(v,\Delta_n^+(A,B))$ to be non empty is that
$v$ belongs to the cone generated by $\Delta_n^+(A,B)$. As far as we know, the equations of this cone are not known, except in a few cases.
It is clearly necessary that $v$ belongs to the simplicial cone generated by all positive roots. To speed up the calculations, we check this condition at each step of the algorithm.

We conclude with a simple example with $p=2,q=2$ and
$A=[1,2]$, $B=[3,4]$. We follow the outline described in Fig.\ref{algo.MPNS}. The highest non compact root is $\theta:=e_1-e_4$.
There are $3$ noncompact walls  not containing the highest root.
$L=[1], [4],[1,3]$
We choose a vector   $v=[4,3,-2,-5]$ not on any noncompact walls.
 Then $[1],[4],[1,3]$ are all such that $v$ and $\theta$ are on the same side.

For $L=[1]$, the $v$ projection do not belong to the cone generated by $\Delta_n^+(A,B)\cap H_L={[e_2-e_3,e_3-e_4]}.$

For $L=[4]$, we obtain the element $M:=\{[1,2,3,4],[1,3],[2,3]\}$ in $\CP(v,\Delta^+_n(A,B)).$

For $L:=[1,3]$, we obtain the element $M=\{[1,2,3,4],[1,3],[2,3]\}$ in $\CP(v,\Delta^+_n(A,B)).$

\subsection{Valid permutations}\label{valid}

 Let $w\in \mathcal W_c$.
  Remark that if $w\mu-\lambda$  does not belong to the cone of non compact positive roots, then  the term corresponding to $w$ in Blattner formula is equal to $0.$
It is important to minimize the number of terms in Blattner formula.
To this purpose, we use a weaker condition:
     we say that  $w \in \mathcal W_{c}$ is a valid element if $w\mu-\lambda$ is in the cone spanned by  (all)  positive roots.
Thus if $w$ is not valid, the corresponding term to $w$ in Blattner formula is equal to $0$.
As there is a simple description of the faces of cone spanned by all positive roots (it is the simplicial cone dual to the simplicial cone generated by fundamental weights),
there is a simple algorithm that constructs valid permutations one  at the time depending on the conditions they have to satisfy, instead of listing all the elements of $\mathcal W_c$.
The corresponding algorithm is used in \cite{C1},\cite{C2}, and we just reproduced it.

\section{Examples}
\begin{example}\end{example}
 We consider the discrete series representation indexed by $\lambda$ and we test for the multiplicity $m_{\mu}^{\lambda}$ where $\mu$ is a K type.
 We write $\mu_{lowest}$ for the lowest $K$-type.  We use the algorithm whose command is :
\medskip

\noindent {\bf  $>$discretemult($\lambda$,$\mu$,p,q)}

\noindent{\scriptsize
\begin{tabular}{|| l  |  l |  l  | l  | |} \hline
\multicolumn{4}{|| c ||} { $m_{\mu}^{\lambda}$:  \bf  numeric case  }\\ \hline
{\bf Group}& {\bf Input}& {\bf Output}& {\bf Time}\\  \hline

&$\lambda$=[[31/2,15/2,9/2],[5/2,3/2,-5/2]]&&\\  \cline{2-4}
& $\mu_{lowest}$=[[17, 9, 6], [1, 0, -4]]&{\scriptsize {\bf  $1$}}&{\scriptsize {\bf $0.026$} sec. }\\  \cline{2-4}
\raisebox{1.5 ex}[0pt]  {\bf U(3,3)}& $\mu$=[[1017, 1009, 1006], [-999, -1000, -1004]]&{\scriptsize $9$}&{\scriptsize {\bf $0.97$} sec.}\\ \cline{2-4}
& $\mu$=[[100017, 10009, 10006], [-9999, -10000, -100004]]&{\scriptsize $9$}&{\scriptsize {\bf $0.91$} sec.}\\  \hline
&$\lambda$=[[31/2, 19/2, 11/2], [ 15/2, 7/2, -37/2]]&&\\  \cline{2-4}
& $\mu_{lowest}$=[[17, 11, 6], [7, 2, -20]]&{\scriptsize {\bf  $1$}}&{\scriptsize {\bf $0.073$} sec. }\\  \cline{2-4}
\raisebox{1.5 ex}[0pt]  {\bf U(3,3)}& $\mu$=[[1017, 1011, 1006], [-993, -998, -1020]]&{\scriptsize $275$}&{\scriptsize {\bf $0.529$} sec.}\\ \cline{2-4}
& $\mu$=[[100017, 10011, 10006], [-9993, -9998, -100020]]&{\scriptsize $11700255$}&{\scriptsize {\bf $0.538$} sec.}\\  \hline
&$\lambda$=[[11/2,7/2,3/2,-1/2],[9/2,5/2,1/2,-3/2]]&&  \\  \cline{2-4}
{\bf U(4,4)}& $\mu_{lowest}$=[[15/2, 9/2, 3/2, -3/2], [11/2, 5/2, -1/2, -7/2]]&{\scriptsize {\bf  $1$}}&{\scriptsize  {\bf $0.565$} sec.}\\  \cline{2-4}
& $\mu$=[[2015/2, 9/2, 3/2, -3/2], [11/2, 5/2, -1/2, -2007/2]]& {\scriptsize{\bf  $120495492015$}}&{\scriptsize {\bf $3.493$} sec.} \\  \hline
&$\lambda$=[[11/2, 9/2, 7/2, 5/2], [3/2, 1/2, -1/2, -3/2]]&&  \\  \cline{2-4}
{\bf U(4,4)}& $\mu_{lowest}$=[[15/2, 13/2, 11/2, 9/2], [-1/2, -3/2, -5/2, -7/2]]&{\scriptsize {\bf  $1$}}&{\scriptsize  {\bf $0.334$} sec.}\\  \cline{2-4}
& $\mu$=[[20015/2, 2013/2, 211/2, 29/2], [-21/2, -203/2, -2005/2, -20007/2]]& {\scriptsize{\bf  $1$}}&{\scriptsize {\bf $273.719$} sec.} \\  \hline
&$\lambda$=[[5, 3, 1, -1, -3], [4, 2, 0, -2]]&&  \\  \cline{2-4}
{\bf U(5,4)}& $\mu_{lowest}$=[[7, 4, 1, -2, -5], [11/2, 5/2, -1/2, -7/2]]&{\scriptsize {\bf  $1$}}&{\scriptsize  {\bf $3.952$} sec. }\\  \cline{2-4}
& $\mu$=[[1007, 4, 1, -2, -5], [11/2, 5/2, -1/2, -2007/2]] &{\scriptsize{\bf  $120495492015$}}&{\scriptsize {\bf $13.752$} sec.}\\  \hline
&$\lambda$=[[11/2,7/2,3/2,-1/2,-5/2],[9/2,5/2,1/2,-3/2,-7/2]]&&  \\  \cline{2-4}
{\bf U(5,5)}& $\mu_{lowest}$=[[8, 5, 2, -1, -4], [6, 3, 0, -3, -6]]&{\scriptsize {\bf  $1$}}&{\scriptsize  {\bf $51.910$} sec. }\\  \cline{2-4}
& $\mu$=[[106,4,2,0,-102],[104,2,0,-2,-104]]& {\scriptsize{\bf  $1458704380546472381$}}&{\scriptsize {\bf $163.104$} sec.}\\  \hline
%\multicolumn{4}{|| c ||}{ \bf series of type "abababab....."  }\\ \hline
%&$\lambda$=[[11/2, 7/2, 3/2], [9/2, 5/2, 1/2]]&&  \\  \cline{2-4}
%{\bf U(3,3)}&$\mu$=[[7, 4, 1], [5, 2, -1]], $\mu$ lowest $K$-type&{\scriptsize {\bf  $1$}}&{\scriptsize  {\bf $0.034$} sec. }\\ \cline{2-4}
%& $\mu$= [[1007, 4, 1], [5, 2, -1001]]&{\scriptsize {\bf $4495506$}}&{\scriptsize {\bf 0.083}sec.}\\  \hline
%&$\mu$=[[10006, 4, -9998], [10004, 2, -10000]];&{\scriptsize {\bf $2500999925005000$}}& {\scriptsize{\bf 1.1} sec.}\\  \hline
%&$\lambda$=[[11/2, 7/2, 3/2], [9/2, 5/2, 1/2]]&&  \\  \cline{2-4}
%{\bf U(3,3)}&$\mu$=[[7, 4, 1], [5, 2, -1]], $\mu$ lowest $K$-type&{\scriptsize {\bf  $1$}}&{\scriptsize  {\bf $0.034$} sec. }\\ \cline{2-4}
%& $\mu$= [[1007, 4, 1], [5, 2, -1001]]&{\scriptsize{\bf  $4495506$}}&{\scriptsize {\bf $0.$} 0.083sec.}\\  \hline
\end{tabular}}
\begin{example}\end{example}
 We consider the discrete series representation indexed by $\lambda,$ a direction $\vec{v}$ and we test for the multiplicity $m_{\mu+t\vec{v}}^{\lambda}$ where $\mu=\mu_{lowest}$ is the lowest $K$-type. We use the algorithm whose command is

\noindent{\bf  $>$ function$_{-}$discrete$_{-}$mu$_{-}$direction$_{-}$lowest$_{-}$($\lambda$,$\vec{v}$,p,q)
}

\noindent For completeness we list $\mu_{lowest}$  relative to each example.
\medskip

% \begin{tabular}{|| c  |  c   ||} \hline
%\multicolumn{2}{|| c ||}{ $m_{\mu+t\vec{v}}^{\lambda}$:  \bf   asymptotic  case}\\ \hline
%{\bf Group}& {\bf Input}\\  \hline
%& $\lambda$=[[11/2,7/2,3/2],[9/2,5/2,1/2]]\\  \cline{2-2}
% $\vec{v}$=[[10006,4,2,-10000],[10004,2,0,-10002]]& %{\scriptsize{%

%$m_{\mu+t\vec {v}} ^\lambda= \left\{
%\begin{array}{r @{\quad {\text {if}} \quad}l}
  % 1&  t \in \N,\  t\leq 2 \\
% 0 &t \in \N, \ t \geq 3 \\
    % \end{array}\right.$\\  \hline
    % \end{tabular}

% \end{document}}

\noindent{\scriptsize
\begin{tabular}{|| c  |  l  |   l  |    l  | l  ||} \hline
\multicolumn{5}{|| c ||}{ $m_{\mu+t\vec{v}}^{\lambda}\ ,\  t \in \N$:  \bf   asymptotic  case}\\ \hline
{\bf Group}& {\bf Input}&$m_{\mu+t\vec{v}}^{\lambda}$& {\bf Output }& {\bf Time}\\  \hline
& $\lambda$=[[9, 7], [-1, -2, -13]]&&&\\  \cline{2-2}
& $\mu_{lowest}$=[[21/2, 17/2], [-2, -3, -14]]&&&\\  \cline{2-5}
& $\vec{v}$=[[1, 0], [-1, 0, 0]]&{\scriptsize $m_{\mu+t\vec {v}} ^\lambda= \left\{
\begin{array}{r @{\quad {\text {if}} \quad}l}
   1&    t=0 \\
 0 &t \geq 1 \\
     \end{array}\right.$}&A1&{\scriptsize {\bf $0.054$} sec.}\\  \cline{2-5}
   \raisebox{1.5 ex}[0pt]   {\bf U(2,3)} & $\vec{v}$=[[6, 1], [-1, -1, -5]]&{\scriptsize $m_{\mu+t\vec {v}} ^\lambda= \left\{
\begin{array}{r @{\quad {\text {if}} \quad}l}
   1&  t\leq 10 \\
 0 &t \geq 11 \\
     \end{array}\right.$}&A2&{\scriptsize {\bf $0.736$} sec.}\\   \cline{2-5}
     & $\vec{v}$=[[1, 0], [0, 0, -1]]&{\scriptsize $m_{\mu+t\vec {v}} ^\lambda=
\begin{array}{r @{\quad {\text {if}} \quad}l}
   1& t\geq 0 \\
     \end{array}$}&A3&{\scriptsize {\bf $0.26$} sec.}\\   \hline
  &$\lambda$=[[59, 39], [51, 7, -156]]&&&  \\  \cline{2-2}
{\bf U(2,3)}& $\mu_{lowest}$=[[121/2, 79/2], [51, 6, -157]]&&&\\  \cline{2-5}
& $\vec{v}$=[[1,0], [0,0,-1]]& %{\scriptsize{%
$m_{\mu+t\vec {v}} ^\lambda=t+1    $&A4&{\scriptsize  {\bf $ 0.71$} sec.}\\  \hline
  &$\lambda$=[[341/2, 49/2], [-3/2, -5/2, -11/2, -371/2]]&&&  \\  \cline{2-2}
{\bf U(2,4)}& $\mu_{lowest}$=[[345/2, 53/2], [-5/2, -7/2, -13/2, -373/2]]&&&\\  \cline{2-5}
& $\vec{v}$=[[6, 1], [-1, -1, -1, -4]]& %{\scriptsize{%
$m_{\mu+t\vec {v}} ^\lambda= \left\{
\begin{array}{r @{\quad {\text {if}} \quad}l}
   1&  t\leq 2 \\
 0 & t \geq 3 \\
     \end{array}\right.$&A5&{\scriptsize  {\bf $46.754$} sec.}\\  \hline
%&&&{\scriptsize{\bf $11266378180152$}}&\\ \hline
&$\lambda$=[[343/2, 31/2, 21/2], [-13/2,-19/2,-363/2]]&&&  \\  \cline{2-2}
& $\mu_{lowest}$=[[173, 17, 12], [-8, -11, -183]]&&&\\  \cline{2-5}
{\bf U(3,3)}& $\vec{v}$=[[6, 1, 0], [-1, -1, -5]]& $m_{\mu+t\vec {v}} ^\lambda= \left\{
\begin{array}{r @{\quad  {\text {if}}\quad}l}
   1& \t=0\\
3 & t =1 \\
6 & 2\leq t\leq 171 \\
3 & t=172 \\
1& t=173\\
0&t\geq 174\\
 \end{array}\right.$&A6&{\scriptsize {\bf $52.020$} sec.}\\ \cline{2-5}
%&$\lambda$=[[343/2, 31/2, 21/2], [-13/2, -363/2, -19/2]]&&&  \\  \cline{2-2}
%& $\mu_{lowest}$=[[173, 17, 12], [-8, -11, -183]]&&&\\  \cline{2-5}
& $\vec{v}$=[[1, 1, 0], [0,-1, -1]]& $m_{\mu+t\vec {v}} ^\lambda= \left\{
\begin{array}{r @{\quad  {\text {if}}\quad}l}
   t+1 &0\leq t \leq 155 \\
156&t\geq 156\\

     \end{array}\right.$&A7&{\scriptsize {\bf $7.1730$} sec.}\\  \hline

\end{tabular}
}

where

\medskip

\noindent{\scriptsize
\begin{tabular}{| l |}\hline
$A1:=[[-(1/2)*t^2+(1/2)*t+1, [0, 0]], [1+(1/2)*t^2-(3/2)*t, [1, 1]], [0, [2, inf]]]$\\ \hline
$A2:=[[-(15/2)*t^2+(7/2)*t+1, [0, 0]], [1, [1, 10]], [66+(1/2)*t^2-(23/2)*t, [11, 11]], [0, [12, inf]]]$\\ \hline
$A3:=[[-(1/2)*t^2+(1/2)*t+1, [0, 0]], [1, [1, inf]]]$\\ \hline
$A4:=[[t+1, [0, inf]]$\\
$A5:=[1+(10/3)*t-(7/2)*t^2+(25/6)*t^3, [0, 0]], [1+(1/3)*t-(1/2)*t^2+(1/6)*t^3, [1, 1]], [1, [1, 2]],$\\
\quad \quad \quad $[6-(7/2)*t+(1/2)*t^2, [2, 3]], [0, [3, inf]]$\\ \hline
$A6:=[1+(13/4)* t-(51/8 )*t^2+(61/4)* t^3-(89/8 )*t^4,[0,0]],[1+(3/2)* t+(1/2)* t^2,[1,1]],$\\
\quad \quad \quad $[-3+(27/4)* t-(1/8 )*t^2-(3/4)* t^3+(1/8 )*t^4,[2,3]],[6,[4,170]],$\\
\quad \quad \quad  $[32664996-(9380059/12)* t+(168227/24)* t^2-(335/12)* t^3+(1/24)* t^4,[171,172]],$\\
\quad \quad \quad $[15225-(349/2)*t+(1/2)*t^2, [172, 173]]$\\
\quad \quad \quad$[78155000-(10718575/6)*t+(183749/12)*t^2-(175/3)*t^3+(1/12)*t^4, [174, 175]], [0, [176, inf]]$\\ \hline
$A7:=[1+(5/4)*t+(1/24)*t^2-(1/4)*t^3-(1/24)*t^4, [0, 0]], [t+1, [1, 154]],$\\
\quad \quad \quad $[23726781-(2457885/4)*t+(143207/24)*t^2-(103/4)*t^3+(1/24)*t^4, [155, 156]], [156, [156, inf]]$\\ \hline
\end{tabular}
}
\medskip

Remark that in some of the examples above,  it can happen that although the polynomials $P_i$ and $P_{i+1}$
(displayed in  the last table giving the $A_i$)
are different, the polynomial $P_{i+1}$ may coincide with $P_i$ on $I_i$ (recall that  several polynomials can have the same values on $I_i\cap \N$).
Thus in this case, we join the two intervals $I_i$ and $I_{i+1}$ and give only the polynomial $P_{i+1}$.
This streamlining of the function $m_{\mu+t \vec v}^{\lambda}$  is given in the third column of the table describing the asymptotic behavior of   $m_{\mu+t \vec v}^{\lambda}$.

We now give an example on $U(3,4)$.

\begin{example}\label{2}\end{example}
{\scriptsize
\begin{verbatim}
discrete:=[[473, 39, 1], [3, 51, 5, -572]];
direction:=,[[1, 0, 0], [0, 0, 0, -1]];

>function_discrete_mul_direction_lowest(discrete,direction,3,4);

[1+51/20 t-1/120 t^5-1/360 t^6+851/360 t^2+23/24 t^3+5/36 t^4,[0,0]],
[1+31/12 t+19/8 t^2+11/12 t^3+1/8 t^4,[1,37]],
[-3262622+2687514/5 t+73/240 t^5-1/720 t^6-13275857/360 t^2+64795/48 t^3-3977/144 t^4,[38,39]],
[-265030+27790 t-1090 t^2+20 t^3,[39,44]],
[-9631849+79305707/60 t+29/80 t^5-1/720 t^6-3399664/45 t^2+110609/48 t^3-5675/144 t^4,[45,45]],
[27182687-212385511/60 t-31/40 t^5+1/360 t^6+69073219/360 t^2-132929/24 t^3+1619/18 t^4,[46,46]],
[-784945+886169/12 t-20959/8 t^2+511/12 t^3-1/8 t^4,[47,83]],
[469370132-337238937/10 t-167/240 t^5+1/720 t^6+363465857/360 t^2-774005/48 t^3+20897/144 t^4,[84,85]],
[5790400-235000 t+2820 t^2,[86,inf]]
\end{verbatim}} }

Thus the  multiplicity $m_{\mu+t \vec{v}}^{\lambda}$ can be completely described by the following  piecewise polynomial function:

{\scriptsize
$$m_{\mu+t\vec {v}} ^\lambda= \left\{
\begin{array}{l @{\quad  {\text {if}}\quad}l}
1+(31/12)*t+(1/8)*t^4+(11/12)*t^3+(19/8)*t^2& 0\leq t\leq 39\\
-265030+27790*t+20*t^3-1090*t^2&  40\leq t \leq 46 \\
-784945+(886169/12)*t-(1/8)*t^4+(511/12)*t^3-(20959/8)*t^2& 47\leq t  \leq 85 \\
5790400-235000*t+2820*t^2&86 \leq t. \\
 \end{array}\right.$$}

The time to compute the example is $TT := 19.487$ and the formula says for instance
that,   for $\lambda=$discrete and $\mu=[[475, 40, 0], [103/2, 9/2, 5/2, -1147/2]]$ the lowest $K$-type, then
$$m_{\mu+20000000\vec v} ^\lambda=1127995300005790400.$$

%%So we  have only one value of $t$ for which the multiplicity is not zero.
%\noindent More generally we fix now a $K$-type labeled by $Kfix.$ To compute the multiplicity of a symbolic $K$-type labeled by {\bf Ksymb}
%

\section{ The program: "Discrete series and K multiplicities for  type $A_r$"}\label{programs}

We give a brief sketch of the main steps for the algorithms involved in  Blattner's formula.
\subsection{$MNPS$ non compact}
We outline the algorithm that computes directly  $\overrightarrow{M}$ for $M\in \CP(v,\Delta^+(A,B)).$ We are taking advantage of the fact that we know the $\overrightarrow{M}'s$ in the case of $|A|=1,\  \text r$, as we saw in Ex. \ref{ex}.
 In the following scheme $p,q$ are integers, $A\subset [1,2,\ldots,p+q]$ is a set  of cardinality $p$, $B$ is the complement subset  defining $U(A,B)$,  $\theta_I$ is the highest noncompact root for $I$. If $L\subset [1,2,\ldots,p+q]$ we denote by $L'$ the complement set.
\begin{figure}[ht]
\begin{center}
\scriptsize{
\begin{tabular}{|ll|}\hline
Input     $[v,A,I]$, $v$  a vector  and  $A\subset I= [1,2,\ldots,p+q]$, $|A|=p$&\\
proceed by induction on the  cardinality of $A$.&\\
 \quad if $|A|=1$ \= or $ |A|=r$ write the unique  $\overrightarrow{M}$, $M\in MNPS$ determined by the situation&\\
  \quad\quad  if $ v\in C(\overrightarrow{M})$ then the output is   $\overrightarrow{M}$,  &\\
%  \quad\quad else the output is empty.&\\
%  \quad else  for each subset  $L \subset A$, such that $L\cap A,\ L'\cap A$ and $L\cap B,\ L'\cap B$ is not empty&\\
\quad construct the hyperplane $H_L$.&\\
  \quad check \=    if  $v$ and $\theta_I$ are on the same side then $H_L$& \\
   \quad\quad if not skip the hyperplane&\\
   \quad  define the projection $v'=\proj_{H_{L}}(v)$ of $v$ on $H_L$ along $\theta_I$&\\
  \quad   compute  $[v_1,A_1,I_1], [v_2,A_2,I_2]$&  \\
 \quad where $A_1=L\cap A$, $A_2= L'\cap A$, $I_1=L$, $I_2=L'$ and &\\
   \quad $ v_1,v_2$ \=  are the components of $v'$  on $L$ and $L'$ respectively.&\\
 \quad\quad if   $v_1$\= (resp. $v_2$)  is not in the positive cone for $\Delta(I_1)$  then skip the hyperplane&\\
    \quad\quad\quad if $ |A_1|=1$,  apply the induction  and compute  $\overrightarrow{M_1}$, $M_1 \in MNPS(v_1,A_1,I_1)$, &  \\
\quad\quad\quad add to  $M_1$ the root $\theta_{I_1}$ (do the same if $|A_2|=1$ )&\\
  \quad\quad\quad else apply the induction and compute  $\overrightarrow{M_i}$, $M_i \in \CP(v_i,A_i,I_i), i=1,2$&\\
  \quad\quad\quad do the cartesian product  $\overrightarrow{M_1} \times \overrightarrow{M_2}$ and add to each set  the root $\theta_I$ &\\
  \quad \quad collect all $\overrightarrow{M}'s$,  for the wall $L$&\\
     \quad end of loop running across $L$'s&\\
   \quad  end induction&\\
   return the set of all $\overrightarrow{M}'s$, $ M\in MPNS$  for all hyperplanes&\\
  \hline
\end{tabular}}
\end{center}
\caption{$\CP(v,\Delta^+(A,B))$}
\label{algo.nonc}
\end{figure}

\subsection{Numeric}\label{algo.num}
The scheme is described in Fig.\ref{Blat.num}.
\begin{figure}[h!]
%\begin{center}

{\scriptsize{\bf Subroutines}:
 \begin{itemize}
\item Procedure to find $\CA^+$-admissible hyperplanes.% for   a given $\Delta_n^+$
\item Procedure to deform a vector: $DefVec_nc(v, \CA^+)$
\item Procedure to compute $\overrightarrow{M}$, $M\in \CP(v,\CA^+)$ as  in Fig.\ref{algo.nonc}.
\item Procedure to compute Kostant function $K(h)=K(0,h)=\frac{e^{h}}{\prod_{\alpha\in \Delta_n^+}(1-e^{-\alpha})}$ or
more generally  $K(g,h).$
\item Compute the valid permutation $Valid(u,v)\subset  {\mathcal W}_c$
\end{itemize}}
\scriptsize{
\begin{tabular}{|ll|}\hline
Input:  $\lambda$, $\mu$&\\
   Compute $\CA^+=\Delta_n^+(\lambda)$:&\\
   Compute $Valid(\lambda,\mu)$\\
  \quad   for  each $w\in Valid(\lambda,\mu)$, \\
  \quad compute $\mu_{reg}=DefVec_{nc}(v, \CA^+)$\\
  \quad compute   $All_w:=\CP(w\mu_{reg} -\lambda,\Delta^+_n)$&\\
   % \quad  \quad  If $w\mu_{reg}-\lambda$ is not in the cone generated by non compact positive &\\
   %\quad\quad  roots then $All_w=\emptyset$ iand we skip the $w$.&\\
    %\quad    \quad   else   compute &\\
\quad   compute $cont_w=\sum_{\overrightarrow{M}\subset {\it All_w}} \Ires_{\overrightarrow{M}} K(w\mu-\lambda-\rho_n)$&\\
   \quad end of loop running across $w$'s&\\
collect all the terms and return& \\
$m^\lambda_\mu= \sum_{w\in Valid(\lambda,\mu)}\epsilon(w) cont_w$&\\
\hline
\end{tabular}}
\caption{Blattner's algorithm (numeric case)} \label{Blat.num}
\end{figure}

\subsection{Asymptotic directions}\label{algo.asym}
We fix the parameter  $\lambda_0$ and  $\mu_0$, regular in the chambers $\a,\a_c$ and a weight $\vec{v}.$ We want to compute $m^{\lambda_0}_{\mu+t\vec{v}}.$ In the application $\mu_0$ will be the lowest $K$-type.
The scheme is described in Fig.\ref{algo.pol}.
\begin{figure}[h!]
%\begin{center}

%\begin{center}
{\scriptsize{\bf Subroutines}:
 \begin{itemize}
\item Procedure to find $\CA^+$-admissible hyperplanes.% for   a given $\Delta_n^+$
\item Procedure to deform a vector: $DefVec_{nc}(v, \CA^+)$
\item Procedure to compute $\overrightarrow{M}$, $M\in |CP(v,\CA^+)$ as  in Fig.\ref{algo.nonc}.
\item Procedure to compute Kostant function $K(h)=K(0,h)=\frac{e^{h}}{\prod_{\alpha\in \Delta_n^+}(1-e^{-\alpha})}$ or
more generally  $K(g,h).$
%\item Compute the valid permutation $Valid(u,v)\subset  {\mathcal W}_c$
\end{itemize}}
\scriptsize{
\begin{tabular}{|ll|}\hline
Input  $\lambda_0$ and  $\vec{v}$
\quad for  each $H$ noncompact wall &\\
\quad\quad if $(H,\vec{v})= 0$ then skip $H$&\\
  \quad\quad else if $(H,\lambda_0-w\mu_0)(H,\vec{v})<0$ then skip $H$ else&\\
 \quad\quad collect $ t_H=(H,\lambda_0-w\mu_0)/(H,\vec{v})$&\\
\quad end of loop running across $H$'s&\\
\quad order list $t_H's$ as $[t_0,t_1,\ldots,t_s]$ where $t_0=0,t_s=\infty$&\\
\quad choose an interior point $\bar{ t_i}$ in  each interval $[t_i,t_{i+1}]$&\\
 \quad Compute polynomial on each $[t_i,t_{i+1}]$ following the scheme Fig.\ref{Blat.num} and the ordered basis determined by $\bar{ t_i}$&\\
 output: the sequence of values $m^{\lambda_0}_{\mu_0+t\vec{v}}$ valid on $[t_i,t_{i+1}], \ t\in \N$&\\
 \hline
\end{tabular}}
%\end{center}
\caption{Blattner's algorithm: asymptotic case}
\label{algo.pol}.
\end{figure}

\newpage

\end{document}